\newtheorem{theorem}{Theorem}[section]
\newtheorem{proposition}[theorem]{Proposition}
\newtheorem{corollary}[theorem]{Corollary}
\newtheorem{lemma}[theorem]{Lemma}
\theoremstyle{definition}
\newtheorem{definition}[theorem]{Definition}
\theoremstyle{remark}
\newtheorem{remark}[theorem]{Remark}
\numberwithin{equation}{section}
\newcommand{\R}{{\mathbb R}}
\renewcommand{\L}{\mathscr{L}}
\newcommand{\K}{\mathbb{K}}
\newcommand{\G}{\mathbb{G}}
\newcommand{\M}{\mathfrak{M}}
\newcommand{\tr}{\mathrm{tr}^*}
\newcommand{\loc}{\mathrm{loc}}
\newcommand{\trm}{\mathrm{tr}}
\newcommand{\eps}{{\varepsilon}}
\newcommand{\beqn}{\begin{eqnarray}}
\newcommand{\eeqn}{\end{eqnarray}}   
\newcommand{\beq}{\begin{eqnarray*}}
\newcommand{\eeq}{\end{eqnarray*}}
\newcommand{\usup}{\overline{u}}
\newcommand{\usub}{\underline{u}}
\newcommand{\BA}{\begin{array}}
\newcommand{\EA}{\end{array}}
\newcommand{\BAN}{\renewcommand{\arraystretch}{1.2}
\setlength{\arraycolsep}{2pt}\begin{array}}
\newcommand{\BAV}[2]{\renewcommand{\arraystretch}{#1}
\setlength{\arraycolsep}{#2}\begin{array}}
\newcommand{\BSA}{\begin{subarray}}
\newcommand{\ESA}{\end{subarray}}
\newcommand{\BAL}{\begin{aligned}}
\newcommand{\EAL}{\end{aligned}}
\newcommand{\note}[1]{\noindent\textit{#1.}\hspace{2mm}}
\newcommand{\Remark}{\note{Remark}}
\newcommand{\forevery}{\quad \forall}
\newcommand{\norm}[1]{\left \|#1\right \|}
\newcommand{\rec}[1]{\frac{1}{#1}}
\newcommand{\dist}{\mathrm{dist}\,}
\newcommand{\prt}{\partial}
\newcommand{\sms}{\setminus}
\newcommand{\ti}{\times}
\newcommand{\sbs}{\subset}
\newcommand{\ind}[1]{_{_{#1}}}
\newcommand{\chr}[1]{\mathbf{1}\ind{#1}}
\newcommand{\sth}{such that\xspace}
\newcommand{\bdw}{\partial\Gw}
\def\bcom{}
\def\ga{\alpha}     \def\gb{\beta}       \def\gg{\gamma}
       \def\gd{\delta}      \def\ge{\epsilon}
    \def\gr{\rho}        
      \def\gw{\omega}
\def\gx{\xi}                
\def\Gg{\Gamma}     \def\Gd{\Delta}      
\def\Gl{\Lambda}    \def\Gs{\Sigma}      
\def\Gw{\Omega}              
      \def\CC{{\mathcal C}}
\def\BBG {\mathbb G}
\def\BBP {\mathbb P}   \def\BBR {\mathbb R}
\def\GTM {\mathfrak M}
\def\BHP{boundary Harnack principle\xspace}
\begin{document}

\title[Semilinear elliptic equations with Hardy potential]{Moderate solutions of semilinear elliptic equations with Hardy potential under minimal restrictions on the potential}

\author{Moshe Marcus}
\address{Department of Mathematics\\ Technion\\ Haifa 32000\\ ISRAEL}
\email{marcusm@math.technion.ac.il}

\author{Vitaly Moroz}
\address{Department of Mathematics\\ Swansea University\\ Singleton Park\\
Swansea SA2~8PP\\ Wales, UK}
\email{v.moroz@swansea.ac.uk}

\subjclass[2010]{35J60, 35J75, 31B35}

\keywords{Hardy potential, Martin kernel, moderate solutions, normalized
boundary trace, boundary singularities}

\begin{abstract}
We study semilinear elliptic equations with Hardy potential
$$-\L_\mu u+u^q=0\leqno{(E)}$$ in a bounded smooth domain $\Omega\subset \R^N$. Here $q>1$, $\L_\mu=\Delta+\frac{\mu}{\delta_\Omega^2}$ and $\delta_\Omega(x)=\dist(x,\partial\Omega)$. Assuming that $0\leq \mu<C_H(\Omega)$, boundary value problems with measure data and discrete boundary singularities for positive solutions of $(E)$ have been studied in \cite{MMN}. In the case of \emph{convex} domains $C_H(\Gw)=1/4$. In this case similar problems have been studied in \cite{Veron}. In the present paper we study these problems, in \emph{arbitrary domains}, assuming only $-\infty<\mu<1/4$, even if $C_H(\Omega)<1/4$. We recall that $C_H(\Omega)\leq 1/4$ and, in general, strict inequality holds. The key to our study is the fact that, if $\mu<1/4$ then in  smooth domains there exist local $\L_\mu$-superharmonic functions in a neighborhood of $\partial\Omega$ (even if $C_H(\Omega)<1/4$). Using this fact we extend the notion of \emph{normalized boundary trace} introduced in \cite{MMN}, to arbitrary domains, provided that $\mu<1/4$. Further we study  the b.v.p.\ with normalized boundary trace $\nu$ in the space of positive finite measures on $\partial\Omega$. We show that existence depends on two critical values of the exponent $q$ and discuss the question of uniqueness.
Part of the paper is devoted to the study of the linear operator: properties of local $\L_\mu$-subharmonic and superharmonic functions and the related notion of moderate solutions. Here we extend and/or improve results of \cite{BMR08} and  \cite{MMN} which are later used in the study of the nonlinear problem.
\end{abstract}

\maketitle

\section{Introduction and main results} \label{Intro}

\subsection{Introduction.}
On bounded smooth domains $\Omega\subset\R^N$ ($N\geq 2$) we study semilinear elliptic equations with Hardy potential of the form,
\begin{equation*}\tag{$P_{\mu}$}\label{*-intro}
-\Delta u-\frac{\mu}{\delta_\Omega^2}u+|u|^{q-1}u=0\quad\mbox{in }\Omega,
\end{equation*}
where $q>1$, $-\infty<\mu<1/4$ and
$$\delta_\Omega(x):=\dist(x,\partial\Omega).$$
Equations $(P_0)$ had been extensively studied in the past two decades and by now the structure of the set of positive solution of such equations is well understood, see \cite{Marcus-Veron} and further references therein. Equation \eqref{*-intro} with Hardy potential, i.e. with $\mu\neq 0$, had been first considered in \cite{BMR08}, where a classification of positive solutions had been introduced and conditions for the existence and nonexistence of {\em large} solutions for \eqref{*-intro} had been derived.

The study and classification of positive solutions of equation \eqref{*-intro} relies on the properties of the associated linear equation
\begin{equation}\label{e:0}
-\L_\mu h=0\quad\mbox{in }\:\Omega,
\end{equation}
where
$$\L_\mu:=\Delta+\frac{\mu}{\delta_\Omega^2}.$$
Denote
$$\alpha_\pm:=\frac{1}{2}\pm\sqrt{\frac{1}{4}-\mu}$$
and note that $\alpha_++\alpha_-=1$.
For $\rho>0$ and $\eps\in(0,\rho)$ we use the notation
$$
\begin{array}{ll}
\Omega_\rho:=\{x\in\Omega:\delta(x)<\rho\}, & \Omega_{\eps,\rho}:=\{x\in\Omega:\eps<\delta(x)<\rho\} \vspace{\jot}\\
D_\rho:=\{x\in \Omega: \delta(x)>\rho\},
& \Sigma_\rho:=\{x\in\Omega:\delta(x)=\rho\}.
\end{array}
$$

A function $w\in L^1_{\loc}(G)$ is a $\L_\mu$-subharmonic in $\Omega$ if $\L_\mu w\leq 0$  in the distribution sense, i.e.,
\begin{equation*}
\int_{G}w(-\Delta \varphi)\,dx-
\int_{G}\frac{\mu}{\delta_\Omega^2}w\varphi\,dx\leq \:0
\qquad\forall\:0\le\varphi\in C^\infty_c(\Omega).
\end{equation*}
We say that $w$ is a {\em local} $\L_\mu$-subharmonic function if there exists $\rho>0$ such that
$w\in L^1_{\loc}(\Omega_\rho)$ is subharmonic in
$\Omega_\rho$. Similarly, (local) $\L_\mu$-superharmonic functions are defined with ``$\geq$'' in the above inequality.

\subsection{The role of the Hardy constant.}
The existence and properties of positive $\L_\mu$-harmonic and superharmonic functions in $\Omega$ are controlled by the Hardy constant of the domain, defined as
\begin{equation}\label{e-Hardy}
C_H(\Omega):=\inf_{C^\infty_c(\Omega)\setminus\{0\}}
\frac{\int_{\Omega}|\nabla u|^2\,dx}{\int_{\Omega}\frac{u^2}{\delta_\Omega^2}\,dx}.
\end{equation}

For a bounded  Lipschitz domain it is known that $C_H(\Omega)\in (0,1/4]$.
If $\Omega$ is convex then $C_H(\Omega)=1/4$. In general, $C_H(\Omega)$ varies with the domain
and could be arbitrary small (see, e.g. \cite{MMP}*{Theorem I and Section 4}) for a discussion and examples).

Denote the {\em local} Hardy constant in $\Omega_\rho$ relative to $\bdw$ by
\begin{equation}\label{e-localHardy}
C_H^{\bdw}(\Omega_\rho):=\inf_{C^\infty_c(\Omega_\rho)\setminus\{0\}}
\frac{\int_{\Omega_\rho}|\nabla u|^2\,dx}{\int_{\Omega_\rho}\frac{u^2}{\delta_{\Omega}^2}\,dx}.
\end{equation}
Note the difference between  $C_H^{\bdw}(\Gw_\rho)$ and $C_H(\Gw_\rho)$: the distance involved in the  first one is $\gd_\Gw(x)=\dist(x,\bdw)$ while in the second it is $\gd_{\Gw_\rho}(x)=\dist(x,\bdw_\rho)$. Obviously $C_H^{\bdw}(\Omega_\rho)\geq C_H(\Omega_\rho)$.

The following lemma shows that in contrast to the "global" Hardy constant $C_H(\Omega)$
the value of the "local" Hardy constant $C_H^{\bdw}(\Omega_\rho)$ does not depend on the shape of $\Omega$,
provided that $\rho$ is sufficiently small.

\begin{lemma}\label{t-Hardy-loc}
{\sc (Local Hardy Inequality)}
There exists $\bar\rho=\bar\rho(\Omega)>0$ such that for every $\rho\in(0,\bar\rho]$ one has $C_H^{\bdw}(\Omega_\rho)=C_H(\Omega_\rho)=1/4$.
\end{lemma}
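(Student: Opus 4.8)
The plan is to prove the two inequalities $C_H(\Omega_\rho)\ge\tfrac14$ and $C_H^{\bdw}(\Omega_\rho)\le\tfrac14$; together with the bound $C_H(\Omega_\rho)\le C_H^{\bdw}(\Omega_\rho)$ noted above, these give $\tfrac14\le C_H(\Omega_\rho)\le C_H^{\bdw}(\Omega_\rho)\le\tfrac14$, so all three quantities equal $\tfrac14$. Fix first $\bar\rho_0>0$ so small that on $\ol{\Omega_{\bar\rho_0}}$ the distance $\delta_\Omega$ is smooth with $|\nabla\delta_\Omega|=1$, the boundary-normal map $(\sigma,t)\mapsto\sigma+t\,n(\sigma)$ ($\sigma\in\partial\Omega$, $n$ the inner unit normal) is a diffeomorphism onto $\Omega_{\bar\rho_0}$ with $\delta_\Omega=t$, and each level set $\Sigma_\rho$, $\rho\le\bar\rho_0$, is a smooth hypersurface; put $M:=\sup_{\ol{\Omega_{\bar\rho_0}}}|\Delta\delta_\Omega|$ and $\bar\rho:=\min\{\bar\rho_0,\,1/(2\sqrt2\,M)\}$. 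For $\rho\le\bar\rho$ one has, in $\Omega_\rho$, $\delta_{\Omega_\rho}=\min\{\delta_\Omega,\rho-\delta_\Omega\}$; moreover $\rho-\delta_\Omega$ is the distance to $\Sigma_\rho$, with $|\nabla(\rho-\delta_\Omega)|=1$ and $|\Delta(\rho-\delta_\Omega)|\le M$.

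The lower bound is the heart of the matter, and I would get it from an explicit positive supersolution together with the ground-state substitution. Consider $W:=(\delta_\Omega(\rho-\delta_\Omega))^{1/2}$, smooth and positive on $\Omega_\rho$, behaving like the half-space ground state $\rho^{1/2}\delta_\Omega^{1/2}$ near $\partial\Omega$ and like $\rho^{1/2}(\rho-\delta_\Omega)^{1/2}$ near $\Sigma_\rho$. With $g(s):=(s(\rho-s))^{1/2}$, a direct computation yields the exact identities
\[
g''(s)+\frac{g(s)}{4\,s^{2}}=\frac{s-2\rho}{4\,s^{1/2}(\rho-s)^{3/2}},\qquad
g''(s)+\frac{g(s)}{4\,(\rho-s)^{2}}=\frac{-(s+\rho)}{4\,s^{3/2}(\rho-s)^{1/2}},
\]
both strictly negative for $s\in(0,\rho)$: $g$ is a supersolution of the ``flat'' Hardy operator with the \emph{exact} coefficient $\tfrac14$, pointwise and with no loss, precisely because $\tfrac14$ is the critical value for which $s\mapsto s^{1/2}$ is one-dimensionally Hardy-harmonic (for a coefficient $>\tfrac14$ the left-hand sides become positive near $s=0$). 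Since $|\nabla\delta_\Omega|=1$ one has $\Delta W=g''(\delta_\Omega)+g'(\delta_\Omega)\,\Delta\delta_\Omega$, so on $\{\delta_\Omega\le\rho/2\}$ (where $\delta_{\Omega_\rho}=\delta_\Omega$) and on $\{\delta_\Omega\ge\rho/2\}$ (where $\delta_{\Omega_\rho}=\rho-\delta_\Omega$) the identities above, together with $|g'(s)|\le\rho^{1/2}/(\sqrt2\,\delta_{\Omega_\rho}^{1/2})$ and $|\Delta\delta_\Omega|\le M$, give
\[
\Delta W+\frac{W}{4\,\delta_{\Omega_\rho}^{2}}\ \le\ \frac{1}{\rho^{1/2}\,\delta_{\Omega_\rho}^{1/2}}\Big(-\tfrac14+\tfrac{M}{\sqrt2}\,\rho\Big)\ <\ 0\qquad\text{in }\Omega_\rho ,
\]
since $\rho\le\bar\rho$; thus $W$ is a positive superharmonic function for $\Delta+\tfrac14\,\delta_{\Omega_\rho}^{-2}$ on $\Omega_\rho$. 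For $u\in C^\infty_c(\Omega_\rho)$, writing $u=Wv$ (legitimate since $W>0$ is smooth near $\supp u$) and using $\int|\nabla(Wv)|^{2}=\int W^{2}|\nabla v|^{2}-\int(W\Delta W)v^{2}$ gives
\[
\int_{\Omega_\rho}|\nabla u|^{2}-\frac14\int_{\Omega_\rho}\frac{u^{2}}{\delta_{\Omega_\rho}^{2}}
=\int_{\Omega_\rho}W^{2}|\nabla v|^{2}-\int_{\Omega_\rho}W\Big(\Delta W+\frac{W}{4\,\delta_{\Omega_\rho}^{2}}\Big)v^{2}\ \ge\ 0 ,
\]
so $C_H(\Omega_\rho)\ge\tfrac14$. (Because $\delta_\Omega\ge\delta_{\Omega_\rho}$, the same $W$ is also a supersolution of $\Delta+\tfrac14\delta_\Omega^{-2}$, so this already shows $C_H^{\bdw}(\Omega_\rho)\ge\tfrac14$ as well.)

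The upper bound $C_H^{\bdw}(\Omega_\rho)\le\tfrac14$ is classical. On $\{0<\delta_\Omega<\rho/2\}$ one has $\delta_\Omega=\delta_{\Omega_\rho}$, and one tests with $u(\sigma,t)=\zeta(\sigma)\,\phi_k(t)$, where $\zeta$ is a fixed cut-off supported near a boundary point and $\phi_k\in C^\infty_c(0,\rho/2)$ is a minimizing sequence for the one-dimensional Hardy inequality $\int_0^{\rho/2}|\phi'|^{2}\,dt\ge\tfrac14\int_0^{\rho/2}\phi^{2}\,t^{-2}\,dt$ along which $\int_0^{\rho/2}\phi_k^{2}\,t^{-2}\,dt\to\infty$ (e.g.\ smoothed truncations of $t^{1/2}$). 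In boundary-normal coordinates the tangential term and the errors from the non-product metric stay bounded while the two dominant integrals have ratio $\to\tfrac14$; letting $k\to\infty$ and shrinking the patch gives $C_H^{\bdw}(\Omega_\rho)\le\tfrac14$. (This is essentially the standard proof of the inequality $C_H(\Omega)\le\tfrac14$ recalled in the introduction.) Combining the two bounds proves the lemma.

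The only genuinely delicate point is obtaining the lower bound with the \emph{exact} constant $\tfrac14$, uniformly for all $\rho\le\bar\rho$: the naive ground state $\delta_\Omega^{1/2}$ leaves the borderline, sign-indefinite remainder $\tfrac12\delta_\Omega^{-1/2}\Delta\delta_\Omega$, and absorbing it into the $\tfrac14\delta_\Omega^{-2}$-term by Cauchy--Schwarz would only give $C_H\ge\tfrac14-O(\rho)$; the product weight $W=(\delta_\Omega(\rho-\delta_\Omega))^{1/2}$ (equivalently, $\delta_\Omega^{1/2}e^{-M\delta_\Omega}$ near $\partial\Omega$ glued with $(\rho-\delta_\Omega)^{1/2}e^{-M(\rho-\delta_\Omega)}$ near $\Sigma_\rho$) removes this loss because the curvature term it produces, $g'(\delta_\Omega)\,\Delta\delta_\Omega$, is of the same order $\delta_{\Omega_\rho}^{-1/2}$ as, yet for small $\rho$ strictly dominated by, the negative main term. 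One should also check that $\bar\rho$ depends on $\Omega$ only through the collar size and $M$, and that $W$ is superharmonic in the distributional sense of the paper: its Laplacian is a bona fide continuous function on $\Omega_\rho$, and $\delta_{\Omega_\rho}^{-2}$ there is merely a continuous positive coefficient, so no singular contribution along $\Sigma_{\rho/2}$ (where $\delta_{\Omega_\rho}$ only has a gradient jump) enters the inequality.
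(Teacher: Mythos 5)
Your proof is correct, but it is genuinely different from what the paper does: the paper gives no argument at all for this lemma, it simply cites \cite{MMP}*{p.~3246} for $C_H^{\bdw}(\Omega_\rho)=1/4$ and \cite{Brezis-Marcus}*{Lemma 1.2} for $C_H(\Omega_\rho)=1/4$. You instead give a self-contained quantitative proof: the lower bound via the explicit supersolution $W=(\delta_\Omega(\rho-\delta_\Omega))^{1/2}$ together with the ground-state substitution $u=Wv$, and the upper bound via the standard one-dimensional Hardy test functions in boundary-normal coordinates, chained with the elementary comparison $C_H(\Omega_\rho)\le C_H^{\bdw}(\Omega_\rho)$. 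I checked your two exact identities for $g(s)=(s(\rho-s))^{1/2}$ (indeed $g''=-\rho^2/(4(s(\rho-s))^{3/2})$, which yields both displayed formulas), the bound $|g'|\le \rho^{1/2}/(\sqrt2\,\delta_{\Omega_\rho}^{1/2})$, and the resulting estimate $\Delta W+\tfrac14 W\delta_{\Omega_\rho}^{-2}\le(\rho^{1/2}\delta_{\Omega_\rho}^{1/2})^{-1}(-\tfrac14+M\rho/\sqrt2)$; all are correct, and the substitution identity you use is the standard one. What your route buys is an explicit, purely local $\bar\rho$ (depending only on the collar width and $\sup|\Delta\delta_\Omega|$) and independence from the cited literature; what the paper's route buys is brevity, since both facts are already on record. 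Two small points to tidy: at the endpoint $\rho=\bar\rho=1/(2\sqrt2 M)$ your displayed bound gives only ``$\le 0$'', not ``$<0$'' --- harmless, since nonpositivity is all the ground-state identity needs (or keep the sharper constant $3/8$ from your main-term estimate, which restores strictness); and in the upper bound, with $\phi_k$ supported in all of $(0,\rho/2)$ the Jacobian error is of size $O(\rho)$ and is not removed by shrinking the patch tangentially only --- you should also shrink the normal extent, e.g.\ take $\phi_k$ supported in $(0,\eta)$ and use $C_H^{\bdw}(\Omega_\rho)\le C_H^{\bdw}(\Omega_\eta)\le\tfrac14(1+C\eta)$, then let $\eta\to0$. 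With these cosmetic adjustments the argument is complete.
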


The fact that $C_H^{\bdw}(\Omega_\rho)=1/4$ is due to \cite{MMP}*{p.3246}, while $C_H(\Omega_\rho)=1/4$ follows from \cite{Brezis-Marcus}*{Lemma 1.2}.

The relation between Hardy constant and the existence of positive $\L_\mu$-superharmonics
is explained by the following classical result, cf. \cite{MMP}*{p.3246}.
\begin{lemma}\label{t-GST}
Equation \eqref{e:0} admits a positive $\L_\mu$-superharmonic function in $\Omega$ if and only if $\mu\le C_H(\Omega)$.

Equation \eqref{e:0} admits a positive $\L_\mu$-superharmonic in $\Omega_\rho$ with $\rho\in(0,\bar\rho)$ if and only if $\mu\le 1/4$.

\end{lemma}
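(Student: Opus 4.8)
The plan is to reduce both assertions to the classical Allegretto--Piepenbrink principle: \emph{for an arbitrary domain $G\subset\R^N$ and a potential $V\in L^\infty_{\loc}(G)$, the equation $-\Delta w-Vw=0$ admits a positive distributional supersolution in $G$ (equivalently, a positive solution) if and only if $\int_G|\nabla\varphi|^2\,dx\ge\int_G V\varphi^2\,dx$ for every $\varphi\in C^\infty_c(G)$.} Granting this, the first assertion follows by taking $G=\Omega$ and $V=\mu/\delta_\Omega^2$, which belongs to $L^\infty_{\loc}(\Omega)$ since the singularity of $\delta_\Omega^{-2}$ sits on $\bdw$: a positive $\L_\mu$-superharmonic function in $\Omega$ is precisely a positive supersolution of $-\Delta w-Vw=0$, and by the definition \eqref{e-Hardy} of $C_H(\Omega)$ the form inequality holds on $C^\infty_c(\Omega)$ if and only if $\mu\le C_H(\Omega)$. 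For the second assertion I would take $G=\Omega_\rho$ and the same $V$ (still locally bounded on $\Omega_\rho$, because a compact subset of $\Omega_\rho$ stays away from both $\bdw$ and $\Sigma_\rho$); now the form inequality on $C^\infty_c(\Omega_\rho)$ holds iff $\mu\le C_H^{\bdw}(\Omega_\rho)$ by the definition \eqref{e-localHardy}, and $C_H^{\bdw}(\Omega_\rho)=1/4$ for every $\rho\in(0,\bar\rho]$ by Lemma~\ref{t-Hardy-loc}. This is the only point at which the shape of $\Omega$ enters the argument.

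It then remains to recall the two directions of the principle. For ``only if'' I would use the ground state substitution: given a positive supersolution $w$ and $\varphi\in C^\infty_c(G)$, put $\psi=\varphi/w$, so $\varphi=w\psi$, and expand
\[\int_G|\nabla\varphi|^2\,dx=\int_G\psi^2|\nabla w|^2\,dx+2\int_G\psi w\,\nabla\psi\cdot\nabla w\,dx+\int_G w^2|\nabla\psi|^2\,dx.\]
The sum of the first two integrals equals $\int_G\nabla w\cdot\nabla(\psi^2 w)\,dx\ge\int_G V\psi^2 w^2\,dx=\int_G V\varphi^2\,dx$, the inequality being the supersolution property tested against $\psi^2 w\ge 0$; hence $\int_G|\nabla\varphi|^2\,dx\ge\int_G V\varphi^2\,dx+\int_G w^2|\nabla\psi|^2\,dx\ge\int_G V\varphi^2\,dx$. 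The one thing needing care is that a positive distributional supersolution of $-\Delta w\ge Vw$ with locally bounded $V$ is, after correction on a null set, superharmonic-like and lies in $W^{1,2}_{\loc}(G)$, which makes the integrations by parts legitimate; alternatively one first mollifies $w$ and lets the mollification parameter tend to zero.

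For ``if'' I would argue by exhaustion. Fix $x_0\in G$ and smooth subdomains $G_1\Subset G_2\Subset\cdots$ with $\bigcup_n G_n=G$. On each $G_n$ the Dirichlet realisation of $-\Delta-V$ has a principal eigenvalue $\lambda_n$ attained by a positive eigenfunction $u_n$, which I normalise by $u_n(x_0)=1$; since $C^\infty_c(G_n)\subset C^\infty_c(G)$ and the form is nonnegative on $C^\infty_c(G)$ we get $\lambda_n\ge 0$, and $\lambda_n$ is nonincreasing in $n$, so $\lambda_n\downarrow\lambda_\infty\ge 0$. Because $V$ and the $\lambda_n$ are uniformly bounded on each compact $K\subset G$, the Harnack inequality applied along chains issuing from $x_0$ gives $0<c(K)\le u_n\le C(K)$ on $K$ for large $n$, and interior elliptic estimates yield a subsequence converging in $C^1_{\loc}(G)$ to some $u$ with $u(x_0)=1$, $u>0$ and $-\Delta u-Vu=\lambda_\infty u\ge 0$; thus $u$ is a positive $\L_\mu$-superharmonic function (indeed a solution when $\lambda_\infty=0$).

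I expect the main obstacle to be the regularity issue in the ``only if'' direction, namely justifying the ground state substitution for a merely distributional positive supersolution; the remaining pieces — the exhaustion and Harnack bookkeeping, and the local boundedness of $\mu/\delta_\Omega^2$ on $\Omega$ and on $\Omega_\rho$ — are routine, and the only domain-dependent input is supplied by Lemma~\ref{t-Hardy-loc}.
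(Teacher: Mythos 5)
Your reduction is the intended one, but note that the paper offers no proof of this lemma at all: it is quoted as a classical result, with \cite{MMP}*{p.3246} cited for the equivalence between the existence of a positive $\L_\mu$-superharmonic function and nonnegativity of the associated quadratic form, and Lemma \ref{t-Hardy-loc} supplying $C_H^{\bdw}(\Omega_\rho)=1/4$. What you have written is, in effect, a self-contained proof of that cited Allegretto--Piepenbrink principle (ground-state substitution for one direction, exhaustion by principal Dirichlet eigenvalue problems plus Harnack and interior estimates for the other), followed by exactly the reduction the authors have in mind via \eqref{e-Hardy}, \eqref{e-localHardy} and Lemma \ref{t-Hardy-loc}; in particular you correctly identify that for the second assertion the relevant constant is $C_H^{\bdw}(\Omega_\rho)$ (distance to $\bdw$, not to $\prt\Gw_\rho$), which is the only place where Lemma \ref{t-Hardy-loc} and the smoothness of $\Gw$ enter. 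So your route is the same in substance, only made explicit; what it buys is independence from the external reference, at the cost of having to handle the regularity of distributional supersolutions yourself.

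On that point, one correction to the step you flag: it is \emph{not} true in general that a positive distributional supersolution lies in $W^{1,2}_{\loc}$; for instance, for $\mu\le 0$ the function $|x-x_1|^{2-N}$ with $x_1$ an interior point is a positive supersolution and fails to be in $W^{1,2}_{\loc}$ near $x_1$ (superharmonic functions are only $W^{1,p}_{\loc}$ for $p<N/(N-1)$). So the integration by parts cannot be justified by regularity of $w$ alone, and your mollification alternative is the one to use: $w_\eps:=w*\rho_\eps$ is smooth, positive, and satisfies $-\Delta w_\eps\ge (Vw)*\rho_\eps$ on compact subsets, so the ground-state identity applied to $w_\eps$ gives $\int|\nabla\varphi|^2\,dx\ge\int \frac{(Vw)*\rho_\eps}{w_\eps}\,\varphi^2\,dx$ for small $\eps$. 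To pass to the limit, observe that the ``only if'' directions are vacuous unless $\mu>0$ (since $C_H(\Omega)>0$), so one may assume $V\ge0$ and apply Fatou's lemma together with a.e.\ convergence (along a subsequence) of $(Vw)*\rho_\eps$ to $Vw$ and of $w_\eps$ to the lower semicontinuous representative of $w$, obtaining $\int|\nabla\varphi|^2\,dx\ge\mu\int\varphi^2\delta_\Omega^{-2}\,dx$. With that repair the proposal is complete.
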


Thus, according to Lemma \ref{t-Hardy-loc}, if $C_H(\Omega)<1/4$ then, for $\mu\in[C_H(\Omega),1/4)$, there exist local positive $\L_\mu$-superharmonic functions but no  ``global'' positive $\L_\mu$-superharmonic functions in $\Omega$.

\subsection{Moderate solutions and normalised boundary trace.}
In this work we study {\em moderate} positive solutions of nonlinear equation \eqref{*-intro} in the range $\mu<1/4$,
including negative values of $\mu$. Recall that in the classical theory of equations \eqref{*-intro} with $\mu=0$, moderate solution is a solution which is dominated by a positive harmonic function, cf. \cite{Marcus-Veron}*{pp.66-69}. This concept had been extended to equations \eqref{*-intro}
with $0\le\mu<C_H(\Omega)$ in \cite{MMN}, where $\L_\mu$-moderate solution is defined as a solution dominated by a positive $\L_\mu$-harmonic function.
This definition is not applicable in the range $\mu\in[C_H(\Omega),1/4)$, when the set of positive $\L_\mu$-harmonic function is empty.
Therefore we modify it as follows:

\begin{definition}\label{d-moderate}
A solution $u\in L^1_{loc}(\Omega)$ of equation \eqref{*-intro} is $\L_\mu$--{\em moderate} if there exists a local positive $\L_\mu$-harmonic function $h$ such that
$|u|\le h$ in $\Omega_\rho$ for some $\rho\in(0,\bar\rho]$.
\end{definition}

We are going to show that equation \eqref{*-intro} admits $\L_\mu$-moderate solutions, with prescribed (normalized) boundary data, in the {\em entire} domain $\Omega$
for every $\mu<1/4$, even when $C_H(\Omega)<1/4$.
The \emph{existence} of a certain class of positive solutions was observed in \cite{BMR08}*{Lemma 4.15}.

More specifically, we study the generalised boundary trace problem
\begin{equation*}\tag{$P_{\mu}^{\nu}$}\label{*nu}
\left\{
\begin{array}{rcll}
-\L_\mu u+|u|^{q-1}u&=&0&\text{in $\Gw$},\smallskip\\
\tr_{\bdw}(u)&=&\nu,&
\end{array}
\right.
\end{equation*}
where $\mu< 1/4$, $q>1$, $\nu\in \mathcal M^+(\partial\Omega)$ and $\tr_{\bdw}(u)$
denotes the {\em normalized boundary trace} of a positive Borel function $u$ on $\bdw$. A function $u\in L^q_{loc}(\Gw)$ is a solution of \eqref{*nu} if it satisfies the equation in the distribution sense and attains the indicated boundary data.

The concept of  normalised  boundary trace was introduced in \cite{MMN} in order to classify positive moderate solutions of \eqref{*nu} in terms of their behaviour at the boundary,  when $0<\mu<C_H(\Omega)$.\footnote{Actually, the assumption $\mu>0$ was introduced in \cite{MMN} only for simplicity: the normalised  boundary trace is well-defined and  the related results remain valid for any $\mu<C_H(\Gw)$.} It is defined as follows.

A nonnegative Borel function $u:\Omega\to\R$ possesses a {\em normalised boundary trace} $\nu\in\M^+(\partial\Omega)$ if,

\begin{equation}\label{ntr-intro}
\lim_{\eps\to 0}\frac{1}{\eps^{\alpha_-}}\int_{\Sigma_\eps}\big|u-\K_\mu^{\Omega}[\nu]\big|dS=0
\end{equation}
where $K_\mu^\Gw$ is the Martin kernel of $\L_\mu$ in $\Omega$. If, for a given $u$ there exists a measure $\nu$ as above then it is unique.

By Ancona \cite{Ancona-87}, if $\mu<C_H(\Gw)$ there is a (1-1) correspondence between the set of positive $\L_\mu$-harmonic functions in $\Gw$ and $\M^+(\partial\Omega)$; the $\L_\mu$-harmonic function $v$ corresponding to a measure $\nu$ has the representation  $v=K_\mu^\Gw[\nu]$. (For details and notation see Subsection 2.1 below.)

 We point out that, except in the case $\mu=0$,  $\mathrm{tr}^*_{\bdw}(u)$ is \emph{not} the standard measure boundary trace of $u$. In fact, when $\mu> 0$, the measure boundary trace of any $\L_\mu$-harmonic function is zero.

In order to extend the definition of normalised boundary trace to arbitrary $\mu<1/4$ we pick $\rho\in (0,\bar\rho]$ (with $\bar\rho$ as in Lemma \ref{t-Hardy-loc})  and employ \eqref{ntr-intro} with $K_\mu^{\Gw_\rho}$ instead of $K_\mu^\Gw$. Since $C_H(\Gw_\rho)=1/4$,  $K_\mu^{\Gw_\rho}$ is well defined
for every $\mu<1/4$.

We show that if, for some $\rho$ as above, there exists $\nu\in \M_+(\bdw)$ such that
\begin{equation}\label{ntr-rho}
\lim_{\eps\to 0}\frac{1}{\eps^{\alpha_-}}\int_{\Sigma_\eps}\big|u-\K_\mu^{\Omega_\rho}[\nu]\big|dS=0
\end{equation}
then \eqref{ntr-rho} holds
for every $\rho\in (0,\bar\rho]$ and the measure $\nu$ is independent of $\rho$.

In addition we show that a positive solution of equation \eqref{*-intro} possesses a normalised boundary trace if and only if it is a moderate solution.

\subsection{Main results.} We start with a few results about the linear operator.

\begin{theorem}\label{subh1} Let $\mu<1/4$. Suppose that $u$ is positive and $\L_\mu$-subharmonic in $\Gw_{\bar\rho}$. Then $u$ has a normalized boundary trace on $\bdw$ if and only if  $u$ is dominated in $\Gw_\rho$ (for some $\rho\in (0,\bar\rho)$) by an $\L_\mu$-harmonic function.
\end{theorem}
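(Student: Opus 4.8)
The plan is to prove both implications, with the "if" direction being essentially immediate and the "only if" direction carrying the substance of the argument.

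For the easy direction, suppose $u$ is dominated in $\Gw_\rho$ by some $\L_\mu$-harmonic function $w$ (i.e., $w$ is $\L_\mu$-harmonic in $\Gw_\rho$ and $0\le u\le w$ there). Since $C_H(\Gw_\rho)=1/4$ by Lemma~\ref{t-Hardy-loc}, the Martin kernel $\K_\mu^{\Gw_\rho}$ is well defined, and by Ancona's representation theorem $w=\K_\mu^{\Gw_\rho}[\lambda]$ for a unique $\lambda\in\M^+(\prt\Gw_\rho)$. The bound $u\le w$ plus the fact that $u$ is $\L_\mu$-subharmonic should force, via a comparison/Riesz-decomposition argument, that $u$ has a least $\L_\mu$-harmonic majorant $\hat u$ in $\Gw_\rho$ with $\hat u\le w$; writing $u=\hat u-p$ with $p\ge 0$ an $\L_\mu$-potential, one shows the potential part contributes nothing in the limit \eqref{ntr-rho}. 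Concretely one needs $\eps^{-\alpha_-}\int_{\Sigma_\eps}p\,dS\to 0$, which follows because an $\L_\mu$-potential in $\Gw_\rho$ is dominated by $\K_\mu^{\Gw_\rho}$ of measures concentrated away from $\prt\Gw$, and such kernels decay like $o(\delta^{\alpha_-})$ on approach to $\prt\Gw$ (this is where one invokes the behaviour of the Martin kernel near the "regular" part of the boundary $\Sigma_\rho$ versus the singular part $\prt\Gw$). Then $\hat u$, being a positive $\L_\mu$-harmonic function in $\Gw_\rho$, has a normalized boundary trace $\nu\in\M^+(\bdw)$, namely the restriction/projection of $\lambda$ to $\bdw$, and $u$ inherits it.

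For the converse, assume $u$ is positive and $\L_\mu$-subharmonic in $\Gw_{\bar\rho}$ and possesses a normalized boundary trace $\nu\in\M^+(\bdw)$, so \eqref{ntr-rho} holds for every $\rho\in(0,\bar\rho]$. Fix such a $\rho$ and set $v:=\K_\mu^{\Gw_\rho}[\nu]$, which is a positive $\L_\mu$-harmonic function in $\Gw_\rho$. The goal is to show $u\le v$ in $\Gw_{\rho'}$ for some $\rho'\in(0,\rho)$, or more precisely to produce \emph{some} $\L_\mu$-harmonic majorant. The natural strategy is to compare $u$ with $v$ on the shrinking domains $\Gw_{\eps,\rho}$: on the inner boundary $\Sigma_\rho$ both $u$ and $v$ are bounded (continuity in the interior), while on $\Sigma_\eps$ the hypothesis \eqref{ntr-rho} says $u-v$ is small in the $\eps^{\alpha_-}$-normalized $L^1(\Sigma_\eps)$ sense. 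One then runs a maximum-principle argument for the operator $-\L_\mu$ on $\Gw_{\eps,\rho}$: since $u$ is $\L_\mu$-subharmonic and $v$ is $\L_\mu$-harmonic, $u-v$ is $\L_\mu$-subharmonic, so it is controlled by its boundary values, suitably interpreted. The technical heart is converting the integral smallness \eqref{ntr-rho} on the surfaces $\Sigma_\eps$ into a pointwise majorization; this is done by testing against a barrier built from the local $\L_\mu$-superharmonic functions guaranteed by Lemma~\ref{t-GST} (these exist in $\Gw_\rho$ for all $\mu<1/4$ precisely because $C_H^{\bdw}(\Gw_\rho)=1/4$), absorbing the $\eps^{\alpha_-}$ normalization into the known $\delta^{\alpha_-}$-type growth of such barriers near $\bdw$, and letting $\eps\to0$.

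I expect the main obstacle to be this last step in the converse: the normalized boundary trace controls $u$ only in an averaged sense on the level sets $\Sigma_\eps$, not pointwise, so one cannot directly apply a classical comparison principle. The resolution should combine (i) a Harnack-type inequality for positive $\L_\mu$-harmonic functions and for $u$ near $\bdw$ (available in $\Gw_\rho$ since the local Hardy constant is the full $1/4$), which upgrades $L^1(\Sigma_\eps)$-smallness to pointwise smallness on a slightly interior surface, and (ii) the sweeping-out (balayage) of $u$ over the domains $D_{1/n}\cap\Gw_\rho$ to produce an increasing sequence of $\L_\mu$-harmonic functions whose limit is the desired majorant, with the normalized trace condition ensuring the limit is finite and in fact $\le v$. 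One must also check compatibility with the interior behaviour, i.e., that $u\in L^1_{\loc}$ together with subharmonicity gives enough regularity ($u$ locally bounded, or at least locally in $L^q$) for the surface integrals $\int_{\Sigma_\eps}|u-v|\,dS$ to make sense and for Fubini-type manipulations in $\eps$ to be legitimate.
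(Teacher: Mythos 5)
Your overall architecture (Riesz decomposition plus vanishing trace of the potential part for the easy direction; construction of a harmonic majorant from the normalized-trace hypothesis for the converse) is aligned with the paper's, which proves this statement as Corollary \ref{PL-nu-converse} using Theorems \ref{tr-sub} and \ref{key-res}. But both directions, as you sketch them, rest on steps that are not correct as stated. In the ``if'' direction, your justification of $\eps^{-\ga_-}\int_{\Gs_\eps}p\,dS\to 0$ is wrong: an $\L_\mu$-potential in $\Gw_\rho$ is in general \emph{not} dominated by $\K_\mu^{\Gw_\rho}$ of a measure carried away from $\bdw$ --- such harmonic functions behave like $\gd^{\ga_+}$ near $\bdw$ (Corollary \ref{trace=0}), whereas a Green potential $\G_\mu^{\Gw_\rho}[\gl]$ with $\gl\in\M^+_{\gd^{\ga_+}}(\Gw_\rho)$ can be much larger near the support of $\gl$; only the liminf bound \eqref{e-trace-potential-upper} holds, not a pointwise $o(\gd^{\ga_-})$ decay. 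The vanishing of the normalized trace of potentials is a genuine result, \eqref{e-trace-potential} in Theorem \ref{key-res}(ii), proved by splitting $\gl$ into a piece near $\bdw$ of small $\gd^{\ga_+}$-mass and a remainder supported away from $\bdw$; you must invoke or reprove it, and then, combined with Theorem \ref{tr-sub} (which gives $u=\K_\mu^{\Gw_\rho}[\nu]-\G_\mu^{\Gw_\rho}[\gl]$), this direction closes exactly as in the paper.

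In the converse direction, the step that would fail is the ``Harnack upgrade'' of the $L^1(\Gs_\eps)$ control in \eqref{ntr-rho} to a pointwise bound. Harnack applies to positive $\L_\mu$-harmonic functions, not to the subharmonic $u$; for $(u-v)^+$ one only has the local maximum principle, and since \eqref{ntr-rho} controls a total mass on $\Gs_\eps$ that may concentrate in a single surface ball, the best pointwise bound it yields at scale $\eps$ is of order $\sigma(\eps)\,\eps^{\ga_-+1-N}$, too large by the factor $\eps^{1-N}$ to be absorbed by a barrier of order $\gd^{\ga_-}$; the comparison on $\Gw_{\eps,\rho}$ therefore does not close. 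The workable route is your alternative (ii): write $-\L_\mu u=-\tau$ with $\tau\ge 0$, let $h_\eps$ be the $\L_\mu$-harmonic function in $\Gw_{\eps,\rho}$ with boundary data $u$, so that $u\le h_\eps$ and $h_\eps$ increases as $\eps\downarrow 0$; but finiteness of the limit must be obtained by keeping the data in $L^1$ and using the uniform Martin/Poisson kernel estimates for $\L_\mu$ in $\Gw_\rho$ (\eqref{Kest}, \eqref{e-trace-0-}), which bound $h_\eps$ at a fixed interior point by a constant times $\eps^{-\ga_-}\int_{\Gs_\eps}u\,dS$, a quantity bounded by the trace hypothesis. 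This is in substance what the paper does (solving the linear problems on the annuli and passing to the limit to obtain $u+\G_\mu^{\Gw_\rho}[\tau]=\K_\mu^{\Gw_\rho}[\nu_0+\nu_\rho]$, hence domination), but your sketch does not identify the kernel estimate that makes the limit finite, and the Harnack step offered in its place is not valid.
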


\begin{theorem}\label{subh2}  Let $\mu<1/4$. Suppose that $u$ is a non-negative, $\L_\mu$-subhar\-mo\-nic function  in $\Gw_{\bar\rho}$. In addition assume that, for some $\rho\in(0,\bar\rho)$ $u$ is dominated in $\Gw_\rho$ by an $\L_\mu$-harmonic function.
Then, either

(i) $\tr_{\bdw}u=0$, in which case, for every $\gb\in (0,\rho)$ there exists a constant $c_\gb>0$ such that
\begin{equation}\label{trace*=0}
u(x)\leq c_\gb \gd(x)^{\ga_+} \quad\text{in }\Gw_\gb
\end{equation}
or

(ii) $\tr_{\bdw}u>0$, in which case, for every $\gb$ as above,
\begin{equation}\label{trace*>0}
\rec{c_\gb} \gb(x)^{\ga_-}\leq \int_{\Gs_\gb}udS\leq c_\gb\gb^{\ga_-} \quad\text{in }\Gw_\gb.
 \end{equation}
\end{theorem}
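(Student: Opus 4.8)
The plan is to read off from Theorem~\ref{subh1} that $u$ already possesses a normalized boundary trace $\nu:=\tr_{\bdw}u\in\M^+(\bdw)$, and then to treat the alternatives $\nu=0$ and $\nu>0$ separately, in both cases reducing to the fine boundary theory of $\L_\mu$ in a collar. Fix $\gb\in(0,\rho)$ and pick $\rho'\in(\gb,\rho)$. Since $\rho'<\bar\rho$, Lemma~\ref{t-Hardy-loc} gives $C_H(\Gw_{\rho'})=1/4>\mu$, so $-\L_\mu$ is subcritical in $\Gw_{\rho'}$ and there possesses a Green kernel $G_\mu^{\Gw_{\rho'}}$, a Martin kernel $\K_\mu^{\Gw_{\rho'}}$, the comparison principle, and the Riesz--Martin representation. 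As the normalized trace does not depend on the collar used to define it, we may evaluate $\nu$ with $\K_\mu^{\Gw_{\rho'}}$, i.e.
\begin{equation}\label{e-ntrace}
\frac1{\eps^{\ga_-}}\int_{\Sigma_\eps}\bigl|u-\K_\mu^{\Gw_{\rho'}}[\nu]\bigr|\,dS\to0\qquad(\eps\to0).
\end{equation}
One caveat to keep in mind when $0\le\mu<1/4$: positive constants are strict $\L_\mu$-subsolutions, so comparison must always be run against genuine $\L_\mu$-super/subharmonic barriers, never against constants.

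\emph{Case $\nu=0$.} Then \eqref{e-ntrace} reduces to $\int_{\Sigma_\eps}u\,dS=o(\eps^{\ga_-})$. Since $u\ge0$ is $\L_\mu$-subharmonic in $\Gw_{\rho'}$ and is dominated there by the $\L_\mu$-superharmonic function $h$, it has a least $\L_\mu$-harmonic majorant $h_u$ in $\Gw_{\rho'}$, with $0\le u\le h_u\le h$. By the Riesz decomposition, $h_u-u$ is a non-negative $\L_\mu$-superharmonic function whose greatest harmonic minorant is $0$, hence a pure Green potential $G_\mu^{\Gw_{\rho'}}[\tau]$ with $\tau\ge0$. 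A Green potential in $\Gw_{\rho'}$ has vanishing normalized boundary trace (its decay at $\bdw$ being governed by the factor $\gd^{\ga_+}$ in $G_\mu^{\Gw_{\rho'}}(\cdot,y)$), and the normalized trace is additive, so $\tr_{\bdw}h_u=\tr_{\bdw}u+\tr_{\bdw}(h_u-u)=0$. Thus $h_u$ is a positive $\L_\mu$-harmonic function in $\Gw_{\rho'}$ with zero normalized trace; by the Martin representation it carries no mass on $\bdw$, so $h_u=\int_{\Sigma_{\rho'}}\K_\mu^{\Gw_{\rho'}}(\cdot,y)\,d\gamma(y)$ with $\gamma$ a finite measure on $\Sigma_{\rho'}$. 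Each $\K_\mu^{\Gw_{\rho'}}(\cdot,y)$, $y\in\Sigma_{\rho'}$, is a positive $\L_\mu$-harmonic function vanishing on $\bdw$, hence $\le c\,\gd^{\ga_+}$ on $\Gw_\gb$ with $c$ uniform in $y$ (comparison with a $\gd^{\ga_+}$-barrier). Therefore $u\le h_u\le c_\gb\,\gd^{\ga_+}$ in $\Gw_\gb$, which is \eqref{trace*=0}.

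\emph{Case $\nu>0$.} From the boundary asymptotics of the Martin kernel recorded in Section~2, $\eps^{-\ga_-}\int_{\Sigma_\eps}\K_\mu^{\Gw_{\rho'}}[\nu]\,dS\to c_0\|\nu\|$ for a positive constant $c_0$. Using $\K_\mu^{\Gw_{\rho'}}[\nu]-\bigl|u-\K_\mu^{\Gw_{\rho'}}[\nu]\bigr|\le u\le \K_\mu^{\Gw_{\rho'}}[\nu]+\bigl|u-\K_\mu^{\Gw_{\rho'}}[\nu]\bigr|$, integration over $\Sigma_\eps$ together with \eqref{e-ntrace} gives $\int_{\Sigma_\eps}u\,dS=\eps^{\ga_-}(c_0\|\nu\|+o(1))$, so there is $\eps_0\in(0,\gb]$ with $\tfrac12 c_0\|\nu\|\,\eps^{\ga_-}\le\int_{\Sigma_\eps}u\,dS\le 2c_0\|\nu\|\,\eps^{\ga_-}$ for $0<\eps\le\eps_0$. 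On the residual range $\eps\in[\eps_0,\gb]$ the quantity $\int_{\Sigma_\eps}u\,dS$ is finite ($\le\int_{\Sigma_\eps}h\,dS$), continuous, and strictly positive (it cannot vanish for a non-trivial non-negative $\L_\mu$-subharmonic $u$ with $\tr_{\bdw}u>0$), hence bounded there between two positive constants; enlarging $c_\gb$ to absorb these yields $c_\gb^{-1}\gd(x)^{\ga_-}\le\int_{\Sigma_{\gd(x)}}u\,dS\le c_\gb\,\gd(x)^{\ga_-}$ for $x\in\Gw_\gb$, i.e. \eqref{trace*>0}.

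The heart of the matter is Case $\nu=0$, specifically the two linear facts that passing to the least $\L_\mu$-harmonic majorant creates no boundary trace and that a positive $\L_\mu$-harmonic function with zero normalized trace decays like $\gd^{\ga_+}$; both rest on the $\gd^{\ga_-}$-versus-$\gd^{\ga_+}$ dichotomy for $\L_\mu$ in $\Gw_{\rho'}$ (Martin kernels based on $\bdw$ behave like $\gd^{\ga_-}$, the Green kernel and the Poisson-type kernels based on the inner surface $\Sigma_{\rho'}$ like $\gd^{\ga_+}$), which is precisely what makes the collar $\Gw_{\rho'}$ — where the local Hardy constant is $1/4$ — the right arena and lets one dispense with the assumption $C_H(\Gw)=1/4$. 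If one prefers to avoid the potential-theoretic machinery, Case $\nu=0$ can be done directly: solving $\L_\mu v_n=0$ in $\Gw_{1/n,\rho'}$ with $v_n=u$ on $\Sigma_{1/n}\cup\Sigma_{\rho'}$ one gets $u\le v_n\uparrow v_\infty$; comparison of the harmonic measure of $\Sigma_{1/n}$ with the $\L_\mu$-superharmonic barrier $\gd^{\ga_-}$ bounds it by $C(n\,\gd(x))^{\ga_-}$, and this, together with the $L^1$-smallness $\int_{\Sigma_{1/n}}u\,dS=o((1/n)^{\ga_-})$ of the inner data, drives the ``inner'' part of $v_n$ to $0$, leaving $u\le v_\infty\le c_\gb\,\gd^{\ga_+}$.
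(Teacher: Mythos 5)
Your route is essentially the paper's. Case $\nu=0$ is Corollary \ref{tr=zero}(i): your least-harmonic-majorant decomposition $u=h_u-\G_\mu^{\Gw_{\rho'}}[\tau]$ is exactly the representation $u=\K_\mu^{\Gw_{\rho'}}[\nu]-\G_\mu^{\Gw_{\rho'}}[\lambda]$ of Theorem \ref{tr-sub}; the vanishing of the normalized trace of the Green potential is Theorem \ref{key-res}(ii) (you should note that $\tau\in\M^+_{\gd^{\ga_+}}(\Gw_{\rho'})$, which follows from the Remark after that theorem since the potential is finite), and the bound $\K_\mu^{\Gw_{\rho'}}[\gamma]\le c\,\gd^{\ga_+}$ in $\Gw_\gb$ for $\gamma$ carried by $\Gs_{\rho'}$ is Corollary \ref{trace=0}(ii); the paper obtains it from the \BHP and \eqref{Kest}, not from a bare $\gd^{\ga_+}$ barrier --- $\gd^{\ga_+}$ by itself is not an $\L_\mu$-supersolution, so your parenthetical needs either a corrected barrier or the kernel estimates. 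Case $\nu>0$ for small $\eps$ is likewise the paper's argument: $u\le\K_\mu^{\Gw_{\rho'}}[\nu]$ (Theorem \ref{c-tr-sub}) plus \eqref{e-trace-0-} and the trace definition. One inaccuracy there: \eqref{e-trace-0-} gives only two-sided bounds $c^{-1}\norm{\nu_0}\le\eps^{-\ga_-}\int_{\Gs_\eps}\K_\mu^{\Gw_{\rho'}}[\nu_0]\,dS\le c\norm{\nu_0}$, not convergence to a constant $c_0\norm{\nu}$; your argument survives with the bounds alone, so this is cosmetic.

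The genuine gap is your treatment of the residual range $\eps\in[\eps_0,\gb]$ in case (ii). You assert that $\eps\mapsto\int_{\Gs_\eps}u\,dS$ is continuous and that it ``cannot vanish for a non-trivial non-negative $\L_\mu$-subharmonic $u$ with $\tr_{\bdw}u>0$''; neither is justified, and the positivity claim is false as a general principle. For instance, with $\mu=0$ and $\Gw$ a ball, $u=(a-b\,\gd)_+$ with $b$ large is non-negative and $\L_0$-subharmonic near $\bdw$ (since $\gd$ is superharmonic there), is dominated by the $\L_0$-harmonic constant $a$, has normalized boundary trace $a\,dS>0$, yet vanishes identically on $\Gs_s$ for every $s\ge a/b$, so the lower bound in \eqref{trace*>0} fails at those levels. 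What your argument (and the paper's own machinery, Theorem \ref{key-res}(i) together with Theorem \ref{c-tr-sub}) actually delivers is the two-sided estimate for all sufficiently small $\gb$, i.e.\ the asymptotics of $\gb^{-\ga_-}\int_{\Gs_\gb}u\,dS$ as $\gb\to0$, which is the real content of (ii); you should state the conclusion in that form rather than patch intermediate levels with the continuity-and-positivity claim, which cannot be repaired without additional hypotheses on $u$ (e.g.\ strict positivity, as for the superharmonic case in Theorem \ref{superh1}).
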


\begin{theorem}\label{superh1}
Let $\mu<1/4$. Suppose that $u$ is positive and $\L_\mu$-super\-har\-monic in $\Gw_{\bar\rho}$.
If $\tr_{\bdw}u\neq 0$ then \eqref{trace*>0} holds.
 \end{theorem}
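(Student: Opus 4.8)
The plan is to obtain a Riesz-type decomposition of $u$ in a boundary collar and to compare it with the Martin kernel. Fix $\rho=\bar\rho$; this is harmless because, as recalled before the theorem, the validity of \eqref{ntr-rho} and the measure $\nu$ do not depend on the choice of $\rho\in(0,\bar\rho]$. Since $\mu<1/4=C_H(\Gw_\rho)$ by Lemma~\ref{t-Hardy-loc}, the operator $\L_\mu$ is subcritical in $\Gw_\rho$: it has a Green function $G_\mu^{\Gw_\rho}$, its positive $\L_\mu$-harmonic functions are exactly the $K_\mu^{\Gw_\rho}[\gs]$ with $\gs\in\M^+(\bdw_\rho)$, $\bdw_\rho=\bdw\cup\Gs_\rho$ (Ancona), and every positive $\L_\mu$-superharmonic function admits a Riesz decomposition. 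Applying this to $u$ (a positive $\L_\mu$-superharmonic function in $\Gw_{\bar\rho}=\Gw_\rho$, identified with its lower semicontinuous representative, and having $0$ as an $\L_\mu$-harmonic minorant) we get $u=h_u+p_u$, where $h_u=K_\mu^{\Gw_\rho}[\gs]$ is the greatest $\L_\mu$-harmonic minorant ($\gs\in\M^+(\bdw_\rho)$) and $p_u=G_\mu^{\Gw_\rho}[\gl]$ is a potential ($\gl\ge 0$ a Radon measure on $\Gw_\rho$ with $G_\mu^{\Gw_\rho}[\gl]\not\equiv\infty$). Put $r:=K_\mu^{\Gw_\rho}[\gs|_{\Gs_\rho}]+p_u\ge 0$, so that $u=K_\mu^{\Gw_\rho}[\gs|_{\bdw}]+r$.

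The first step is to show that $r$ has zero normalised boundary trace on $\bdw$, i.e. $\int_{\Gs_\eps}r\,dS=o(\eps^{\ga_-})$ as $\eps\to 0$. This holds for $K_\mu^{\Gw_\rho}[\gs|_{\Gs_\rho}]$ because $\gs|_{\Gs_\rho}$ is carried by $\Gs_\rho$, at positive distance from $\bdw$, so that $K_\mu^{\Gw_\rho}[\gs|_{\Gs_\rho}]$ is comparable near $\bdw$ to the subordinate solution $\gd^{\ga_+}$; hence $\int_{\Gs_\eps}K_\mu^{\Gw_\rho}[\gs|_{\Gs_\rho}]\,dS\lesssim\eps^{\ga_+}\,\mathcal H^{N-1}(\Gs_\eps)=o(\eps^{\ga_-})$, using $\ga_+>\ga_-$ (valid since $\mu<1/4$). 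For the potential $p_u=G_\mu^{\Gw_\rho}[\gl]$ one uses that Green potentials of a subcritical $\L_\mu$ in $\Gw_\rho$ have zero normalised boundary trace on $\bdw$, which follows from the boundary decay $G_\mu^{\Gw_\rho}(x,y)\lesssim\gd(x)^{\ga_+}$ for $x$ near $\bdw$ and $y$ away from it, applied after exhausting $\gl$ by its restrictions to $\{\gd>1/n\}$ and using $G_\mu^{\Gw_\rho}[\gl]\not\equiv\infty$ to absorb the mass of $\gl$ near $\bdw$. Consequently $\tr_{\bdw}u=\tr_{\bdw}K_\mu^{\Gw_\rho}[\gs|_{\bdw}]=\gs|_{\bdw}$; comparing with the hypothesis $\tr_{\bdw}u=\nu$ and using uniqueness of the normalised trace, $\gs|_{\bdw}=\nu$, so $u=K_\mu^{\Gw_\rho}[\nu]+r$ with $r\ge 0$ and $\int_{\Gs_\eps}r\,dS=o(\eps^{\ga_-})$.

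The second ingredient is the two-sided bound $c^{-1}\gb^{\ga_-}\le\int_{\Gs_\gb}K_\mu^{\Gw_\rho}[\nu]\,dS\le c\,\gb^{\ga_-}$ for $\gb$ small, with $c=c(\nu)>0$ (here $\nu\neq 0$, i.e. $\nu(\bdw)>0$, is used): this is exactly alternative~(ii) of Theorem~\ref{subh2} applied to the $\L_\mu$-harmonic function $K_\mu^{\Gw_\rho}[\nu]$, whose normalised trace is $\nu\neq 0$ (equivalently it follows from the estimates $K_\mu^{\Gw_\rho}(\cdot,\xi)\asymp\gd^{\ga_-}$ for $\xi\in\bdw$ of the linear theory). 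Both inequalities of \eqref{trace*>0} now follow. Since $r\ge 0$, $u\ge K_\mu^{\Gw_\rho}[\nu]$ in $\Gw_\rho$, hence $\int_{\Gs_\gb}u\,dS\ge\int_{\Gs_\gb}K_\mu^{\Gw_\rho}[\nu]\,dS$, which is positive for every $\gb\in(0,\rho)$ and $\ge c^{-1}\gb^{\ga_-}$ for $\gb$ small; for the upper bound, $\int_{\Gs_\gb}u\,dS=\int_{\Gs_\gb}K_\mu^{\Gw_\rho}[\nu]\,dS+\int_{\Gs_\gb}r\,dS\le c\,\gb^{\ga_-}+o(\gb^{\ga_-})\le c'\gb^{\ga_-}$ for $\gb$ small. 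For $\gb$ bounded away from $0$ it remains only to note $\int_{\Gs_\gb}u\,dS<\infty$, which follows from $u=K_\mu^{\Gw_\rho}[\nu]+r$ and the finiteness of $\int_{\Gs_\gb}K_\mu^{\Gw_\rho}(\cdot,\xi)\,dS$ and $\int_{\Gs_\gb}G_\mu^{\Gw_\rho}(\cdot,y)\,dS$ (integrated against $\nu$, $\gs|_{\Gs_\rho}$ and $\gl$, via the interior and boundary estimates for $K_\mu^{\Gw_\rho}$ and $G_\mu^{\Gw_\rho}$). Absorbing the resulting constants into $c_\gb$ yields \eqref{trace*>0} on all of $(0,\rho)$.

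Most of the argument is bookkeeping with Ancona's representation, the Riesz decomposition, Theorem~\ref{subh2}, and the elementary inequality $\ga_+>\ga_-$. The one genuinely delicate point is the vanishing of the normalised boundary trace of the potential $p_u=G_\mu^{\Gw_\rho}[\gl]$ when the mass of $\gl$ is allowed to accumulate at $\bdw$; this rests on the sharp two-sided boundary estimates for $G_\mu^{\Gw_\rho}$ near $\bdw$ (of the type $G_\mu^{\Gw_\rho}(x,y)\asymp(1\wedge\gd(x)\gd(y)/|x-y|^2)^{\ga_+}|x-y|^{2-N}$), which belong to the linear theory developed earlier in the paper and should be invoked from there.
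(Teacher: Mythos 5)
Your argument is correct and is essentially the paper's own route: the Riesz decomposition $u=\G_\mu^{\Gw_\rho}[\tau]+\K_\mu^{\Gw_\rho}[\nu]$ together with the Representation Theorem (as in Corollary \ref{tr=zero}(ii)), the vanishing of the normalized trace of the potential and of the part of the harmonic term charged on $\Gs_\rho$ (Theorem \ref{key-res}(ii) and Corollary \ref{trace=0}), and the two-sided estimate \eqref{e-trace-0-} for $\K_\mu^{\Gw_\rho}[\nu_0]$ with $\nu_0=\tr_{\bdw}u\neq 0$. No substantive difference from the paper's proof.
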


\begin{corollary}\label{c-subh}
Suppose that $u$ is non-negative and $\L_\mu$-subharmonic in $\Gw_{\bar\rho}$. Then either \eqref{trace*=0} holds or
\begin{equation}\label{e-subh}
0<\limsup_{\gb\to 0}\rec{\gb^{\ga_-}}\int_{\Gs_\gb}udS.
\end{equation}
\end{corollary}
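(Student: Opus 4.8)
The plan is to obtain the corollary as an immediate consequence of Theorems~\ref{subh1} and~\ref{subh2}, arguing by contraposition on the stated dichotomy: I will show that if \eqref{e-subh} fails then \eqref{trace*=0} must hold. Throughout we keep the standing assumption $\mu<1/4$ inherited from the three preceding results, so that $\ga_\pm$ are real and $\bar\rho$ is the constant of Lemma~\ref{t-Hardy-loc}. One may also assume $u\not\equiv0$, since otherwise \eqref{trace*=0} is trivial; a nontrivial non-negative $\L_\mu$-subharmonic function is positive in each connected component of $\Gw_{\bar\rho}$ by the strong maximum principle, and (since \eqref{trace*=0} and \eqref{e-subh} may be examined component by component) it costs nothing to suppose $u>0$ in $\Gw_{\bar\rho}$.

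Now suppose \eqref{e-subh} is false, i.e.\ $\limsup_{\gb\to0}\gb^{-\ga_-}\int_{\Gs_\gb}u\,dS=0$. Since $\gb^{-\ga_-}\int_{\Gs_\gb}u\,dS\ge0$, a vanishing $\limsup$ forces the full limit to vanish: $\lim_{\gb\to0}\gb^{-\ga_-}\int_{\Gs_\gb}u\,dS=0$. Fix any $\rho\in(0,\bar\rho]$; because $\K_\mu^{\Gw_\rho}[0]\equiv0$, this limit relation is precisely condition \eqref{ntr-rho} with $\nu=0$, so $u$ possesses a normalized boundary trace on $\bdw$ and $\tr_{\bdw}u=0$. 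Theorem~\ref{subh1} therefore applies ($u$ is positive and $\L_\mu$-subharmonic in $\Gw_{\bar\rho}$ and has a normalized boundary trace), and yields that $u$ is dominated in $\Gw_\rho$, for some $\rho\in(0,\bar\rho)$, by an $\L_\mu$-harmonic function. Hence $u$ meets the hypotheses of Theorem~\ref{subh2}; as $\tr_{\bdw}u=0$, alternative (ii) of that theorem is excluded and we land in alternative (i), which is exactly \eqref{trace*=0}. This proves the dichotomy.

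I expect no substantial obstacle: the corollary is essentially a repackaging of Theorems~\ref{subh1} and~\ref{subh2}. The only points that merit a word of care are (a) the identification of the condition ``$\limsup_{\gb\to0}\gb^{-\ga_-}\int_{\Gs_\gb}u\,dS=0$'' with ``$\tr_{\bdw}u=0$'', which rests on $\K_\mu^{\Gw_\rho}[0]\equiv0$ together with the elementary fact that a non-negative quantity with vanishing $\limsup$ actually converges to $0$; (b) noting that the vanishing of the normalized trace rules out alternative (ii) in Theorem~\ref{subh2} (which is reserved for $\tr_{\bdw}u>0$) and hence forces alternative (i); and (c) the bookkeeping needed when $\Gw_{\bar\rho}$ has several connected components, which is handled componentwise since the constants $c_\gb$ in \eqref{trace*=0} may be chosen separately on each component. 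None of these requires any new computation.
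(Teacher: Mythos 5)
Your overall route is exactly the paper's: negating \eqref{e-subh} and using non-negativity forces $\lim_{\gb\to 0}\gb^{-\ga_-}\int_{\Gs_\gb}u\,dS=0$, which (since $\K_\mu^{\Gw_\rho}[0]\equiv 0$) is precisely \eqref{ntr-rho} with $\nu=0$, i.e. $\tr_{\bdw}u=0$; Theorem \ref{subh1} then supplies the domination hypothesis of Theorem \ref{subh2}, whose alternative (i) is \eqref{trace*=0}. This is the derivation intended by the remark following the corollary, and it mirrors the Section \ref{sect-2} chain (Corollary \ref{PL-nu-converse}, Theorem \ref{tr-sub}, Corollaries \ref{trace=0} and \ref{tr=zero}).

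The one genuinely incorrect step is your reduction to $u>0$: the strong maximum principle does \emph{not} imply that a nontrivial non-negative $\L_\mu$-\emph{sub}harmonic function is positive on each component of $\Gw_{\bar\rho}$. That positivity statement is the one valid for \emph{super}harmonic functions (minimum principle/Harnack); for subharmonic functions it fails, e.g. $u=\max(x_1,0)$ is non-negative, subharmonic (case $\mu=0$), nontrivial, and vanishes on an open set, and by Kato's inequality $h^+$ for a sign-changing local $\L_\mu$-harmonic $h$ gives analogous examples for $\mu\neq 0$. As written, you therefore apply Theorem \ref{subh1} outside the case your reduction actually covers. The gap is easy to close, since positivity is inessential here: the Section \ref{sect-2} counterpart of Theorem \ref{subh1}, Corollary \ref{PL-nu-converse}, is stated for non-negative $\L_\mu$-subharmonic functions and, combined with Theorem \ref{tr-sub}, yields the domination needed for Theorem \ref{subh2}. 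Alternatively, run your argument on $v=u+\K_\mu^{\Gw_{\bar\rho}}[\gs]$ with $\gs\in\M^+(\Gs_{\bar\rho})\sms\{0\}$: $v$ is positive, $\L_\mu$-subharmonic, and has the same (zero) normalized trace on $\bdw$ because $\lim_{\ge\to 0}\ge^{-\ga_-}\int_{\Gs_\ge}\K_\mu^{\Gw_{\bar\rho}}[\gs]\,dS=0$ (as in the proof of Lemma \ref{tr-unique}); since $u\le v$, the bound \eqref{trace*=0} for $v$ transfers to $u$. The componentwise bookkeeping then becomes unnecessary.
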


\begin{remark}
The corollary is an improved version of \cite{BMR08}*{Thm. 2.9}. Since we do not assume that $u$ is dominated by an $\L_\mu$-harmonic function the alternative to \eqref{trace*=0} is not necessarily \eqref{trace*>0} but only \eqref{e-subh} which is nothing more than the negation of the statement $\tr_{\bdw} u=0$.
\end{remark}

Clearly every positive subsolution  of the nonlinear equation \eqref{*-intro} is $\L_\mu$-subharmonic so that the above results apply to it.

We turn to the nonlinear problem.

\begin{theorem}\label{Gw-ex-intro}
Let $\mu< 1/4$ and $\nu\in\M^+(\bdw)\setminus\{0\}$. Assume that $\K_\mu^{\Omega_\rho}[\nu]\in L^q_{\delta^{\alpha_+}}(\Omega_\rho)$ for some $\rho\in(0,\bar\rho]$.
Then the boundary value problem \eqref{*nu} admits a positive solution $u$.
\end{theorem}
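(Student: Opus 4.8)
The plan is to construct the solution by a sub/supersolution argument carried out on the collar $\Gw_\rho$ and then patched to all of $\Gw$, using the linear Martin kernel $\K_\mu^{\Gw_\rho}$ as the building block. Set $w:=\K_\mu^{\Gw_\rho}[\nu]$, which is a positive $\L_\mu$-harmonic function in $\Gw_\rho$ by the theory recalled in Subsection 2.1 (applicable because $C_H(\Gw_\rho)=1/4>\mu$ thanks to Lemma \ref{t-Hardy-loc}), and which satisfies the normalized-boundary-trace condition \eqref{ntr-rho} by construction. The hypothesis $w\in L^q_{\gd^{\ga_+}}(\Gw_\rho)$ is exactly the integrability needed for the Green-potential correction term to be finite: one wants to solve, in $\Gw_\rho$,
\begin{equation*}
-\L_\mu v+v^q=0,\qquad v\leq w,
\end{equation*}
with $v$ having the same normalized boundary trace $\nu$. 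First I would show that $w$ itself is a supersolution (trivially, since $-\L_\mu w+w^q=w^q\geq 0$) and that $0$ is a subsolution, then run monotone iteration $v_0=w$, $-\L_\mu v_{n+1}=-v_n^q$ in $\Gw_\rho$ with $v_{n+1}=w$ on $\Sigma_\rho$ and the appropriate behaviour at $\bdw$; the decreasing limit $v:=\lim v_n$ exists, lies between $0$ and $w$, and solves the equation in $\Gw_\rho$ in the distributional sense. The key point is that $w-v$ is the $\L_\mu$-Green potential in $\Gw_\rho$ of the bounded-by-$w^q$ source $v^q$, and the assumed membership $w^q\in L^1_{\gd^{\ga_+}}(\Gw_\rho)$ forces $w-v = o(\gd^{\ga_-})$ in the $L^1(\Sigma_\eps)$-average sense (this is where one uses sharp two-sided estimates for the Green kernel of $\L_\mu$ on the collar, i.e. $G_\mu^{\Gw_\rho}(x,y)\asymp \min\{\,|x-y|^{2-N},\ldots\}$ with the $\gd^{\ga_+}$-weights at the boundary); hence $v$ retains the normalized boundary trace $\nu$, by \eqref{ntr-rho} and the triangle inequality.

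Next I would extend $v$ to all of $\Gw$. Since $v\leq w$ and $w$ is $\L_\mu$-harmonic (hence locally bounded) away from $\bdw$, $v$ is a bounded positive subsolution near $\Sigma_\rho$; I would use a second sub/supersolution argument on $D_{\rho/2}=\{\gd>\rho/2\}$, where the operator $\L_\mu$ is uniformly elliptic with bounded smooth coefficients, to solve $-\L_\mu u+u^q=0$ there with $u=v$ on $\Sigma_{\rho/2}$, obtaining a solution that agrees with the collar solution on the overlap after an elementary gluing/comparison step (the standard pasting lemma: the minimum of two supersolutions is a supersolution, the maximum of two subsolutions is a subsolution, and one squeezes). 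Alternatively — and more cleanly — I would work directly in $\Gw$: take as supersolution the function equal to $w$ in $\Gw_\rho$ and extended by a large $\L_\mu$-superharmonic function in $D_\rho$ matched continuously, take $0$ as subsolution, and iterate globally; the only subtlety is ensuring the supersolution is genuinely $\L_\mu$-superharmonic across $\Sigma_\rho$, which is arranged by taking the min of $w$ (extended) with a constant multiple of a fixed local superharmonic. Either way one produces $u\in L^q_{\loc}(\Gw)$, $0\leq u\leq w$ on $\Gw_\rho$, solving \eqref{*-intro} distributionally in $\Gw$.

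Finally I would verify $\tr_{\bdw}(u)=\nu$: on $\Gw_\rho$ we have $0\leq u\leq w=\K_\mu^{\Gw_\rho}[\nu]$ and $w-u$ is the Green potential of $u^q\leq w^q\in L^1_{\gd^{\ga_+}}(\Gw_\rho)$, so $\eps^{-\ga_-}\int_{\Sigma_\eps}|u-\K_\mu^{\Gw_\rho}[\nu]|\,dS=\eps^{-\ga_-}\int_{\Sigma_\eps}(w-u)\,dS\to 0$, which is precisely \eqref{ntr-rho}; by the $\rho$-independence of the normalized boundary trace established earlier in the paper, $u$ has normalized boundary trace $\nu$ on $\bdw$. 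The main obstacle is the second step above — showing that the Green-potential correction $w-u$ does not destroy the normalized boundary trace, i.e. obtaining the $o(\gd^{\ga_-})$-in-average decay of $G_\mu^{\Gw_\rho}[u^q]$ from the weighted $L^1$ bound on the source. This requires a Fubini argument combined with the sharp boundary behaviour of $G_\mu^{\Gw_\rho}(\cdot,y)$, namely that $\int_{\Sigma_\eps}G_\mu^{\Gw_\rho}(x,y)\,dS(x)\lesssim \eps^{\ga_-}\gd(y)^{\ga_+}$ uniformly, which I would either cite from the linear theory developed earlier (the analogues of the results from \cite{BMR08} and \cite{MMN} extended to $\mu<1/4$) or establish via the barrier functions $\gd^{\ga_\pm}$ that are $\L_\mu$-harmonic to leading order near $\bdw$. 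Once this estimate is in hand, the rest is routine: monotone iteration, comparison, and the triangle inequality applied to \eqref{ntr-rho}.
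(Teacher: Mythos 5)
Your collar construction (the solution $v$ in $\Gw_\rho$ with $v+\G_\mu^{\Gw_\rho}[v^q]=\K_\mu^{\Gw_\rho}[\nu']$, trace identified via Theorem \ref{key-res}) is sound and is essentially the paper's Proposition \ref{nl-existence}. The genuine gap is in the passage to all of $\Gw$ and the final trace verification. There you assert that, for the global solution $u$ with $0\le u\le w:=\K_\mu^{\Gw_\rho}[\nu]$ in $\Gw_\rho$, ``$w-u$ is the Green potential of $u^q$''. This is unjustified: since $-\L_\mu(w-u)=u^q\ge 0$, the Riesz decomposition only gives $w-u=\G_\mu^{\Gw_\rho}[u^q]+h$ with $h\ge 0$ an $\L_\mu$-harmonic function whose representing measure may charge $\bdw$, in which case $\tr_{\bdw}(u)$ would be strictly smaller than $\nu$. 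Ruling out such an $h$ is exactly the missing lower bound, and your scheme provides no lower barrier carrying the trace $\nu$: with subsolution $0$ (your ``cleaner'' variant) there is no lower control at all, and in the gluing variant you cannot simply compare the collar solution with the glued solution in $\Gw_\rho$, because Lemma \ref{comparison} requires control of the difference on the whole boundary of the comparison domain, including $\bdw$ --- but the behaviour of $u$ at $\bdw$ is precisely what is to be determined. So as written, nothing prevents the constructed $u$ from having normalized trace $\lneq\nu$.

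The paper closes this gap with a two-sided sandwich built entirely inside the collar, combined with comparison on annuli whose boundaries avoid $\bdw$. It takes the lower barrier $U_{\nu,0}$ (collar solution with data $\nu$ on $\bdw$ and $0$ on $\Gs_\rho$) and the upper barrier $U_{\nu,\infty}=\lim_k U_{\nu,k}$ (data $\nu$ on $\bdw$, $k\,dS$ on $\Gs_\rho$); then for $R\in(0,\rho)$ it solves the interior problem in $D_R$ with Dirichlet data $U_{\nu,0}\lfloor_{\Gs_R}$, so that the comparison principle, applied in $\{R<\gd<\rho\}$ where both boundary components are interior surfaces, yields $U_{\nu,0}\le v_R\le U_{\nu,\infty}$ there. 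Local bounds from the Keller--Osserman estimate (Lemma \ref{OK}) allow extraction of a limit $v$ of $v_{R_j}$ as $R_j\to 0$, which is a solution in all of $\Gw$ and inherits $U_{\nu,0}\le v\le U_{\nu,\infty}$ in $\Gw_\rho$; this sandwich, not an identity for $w-v$, is what forces $\tr_{\bdw}(v)=\nu$. To repair your proof you would need to supply an analogous lower barrier (or prove $u\ge U_{\nu,0}$ in $\Gw_\rho$ by some limiting comparison argument of this type); the rest of your outline, including the use of \eqref{Gmt'}--\eqref{e-trace-potential} for the Green potential, then goes through.
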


We emphasise that if $C_H(\Omega)<1/4$ then for $\mu\in[C_H(\Omega),1/4)$ an $\L_\mu$-harmonic extension of $\nu$ exists only locally in a strip $\Omega_\rho$. Nevertheless,  problem \eqref{*nu}
has a positive solution  in  $\Omega$, for any $\mu<1/4$ .

When $\mu<C_H(\Gw)$ problem \eqref{*nu} admits at most one solution for every $\nu\in \M_+(\bdw)$ \cite{MMN}. However,  if $C_H(\Gw)<\mu<1/4$ then uniqueness fails. Indeed,
it was proved in \cite{BMR08}*{Theorem 5.3} that in the latter case there exists a positive solution of \eqref{*nu} with $\nu=0$.
An alternative, more direct proof, of this result is presented in Appendix \ref{App-B}.

\begin{theorem}\label{limit-intro} Let $u$ be a positive solution of \eqref{*-intro}. Then,

(i) $u$ has a normalized boundary trace if and only if $u\in L^q(\Gw;\gd^{\ga_+})$.

(ii) If $u$ has normalized boundary trace $\nu$ then
\begin{equation}\label{e-nontang-moderate-intro}
\lim_{x\to y}\frac{u(x)}{\K_\mu^{\Omega_\rho}[\nu](x)}=1
\quad\text{non-tangentially, for $\nu$-a.e. $y\in\partial\Omega$}.
\end{equation}
\end{theorem}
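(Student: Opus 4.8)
The plan is to reduce the statement to the linear theory by exploiting that $\L_\mu u=u^q\ge0$, writing $u$ in the strip $\Gw_\rho$ as a positive $\L_\mu$-harmonic function plus a Green-potential correction. Fix $\rho\in(0,\bar\rho]$; by Lemma~\ref{t-Hardy-loc}, $C_H(\Gw_\rho)=1/4$, so for every $\mu<1/4$ the Green operator $\G_\mu^{\Gw_\rho}$ and the Martin kernel $\K_\mu^{\Gw_\rho}$ of $\L_\mu$ in $\Gw_\rho$ are at our disposal, together with the usual two-sided estimates of their kernels near $\bdw$ and the \BHP for $\L_\mu$ in $\Gw_\rho$. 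Two preliminary reductions help. First, by the \OK estimate a positive solution $u$ is locally bounded in $\Gw$, hence $\int_{\Gw\sms\Gw_\rho}u^q\gd^{\ga_+}\,dx<\infty$; so $u\in L^q(\Gw;\gd^{\ga_+})$ iff $u^q\in L^1(\Gw_\rho;\gd^{\ga_+})$, and only the behaviour in $\Gw_\rho$ matters. Second, since $u$ is a positive solution it is $\L_\mu$-subharmonic in $\Gw_{\bar\rho}$, so Theorems~\ref{subh1}--\ref{subh2} apply to it directly; in particular $u$ has a normalized boundary trace iff it is dominated in some $\Gw_\rho$ by an $\L_\mu$-harmonic function.

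For (i), ``$\Leftarrow$'': assume $u^q\in L^1(\Gw_\rho;\gd^{\ga_+})$ with $\rho=\bar\rho$. Then $v:=\G_\mu^{\Gw_\rho}[u^q]$ is finite a.e., positive, $\L_\mu$-superharmonic in $\Gw_\rho$, and has normalized boundary trace $0$ (the Green kernel integrated against an $L^1_{\gd^{\ga_+}}$ density has $o(\eps^{\ga_-})$ flux across $\Gs_\eps$ — a linear estimate). Hence $w:=u+v$ is positive and $\L_\mu$-harmonic in $\Gw_{\bar\rho}$, and being dominated by itself it has, by Theorem~\ref{subh1}, a normalized boundary trace $\nu\in\M^+(\bdw)$. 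From $|u-\K_\mu^{\Gw_\rho}[\nu]|\le|w-\K_\mu^{\Gw_\rho}[\nu]|+v$, dividing by $\eps^{\ga_-}$ and integrating over $\Gs_\eps$ gives $\tr_{\bdw}u=\nu$. For ``$\Rightarrow$'': assume $u$ has a normalized trace, so by Theorem~\ref{subh1} there are $\rho\in(0,\bar\rho)$ and an $\L_\mu$-harmonic $h$ with $u\le h$ in $\Gw_\rho$. For $\eta\in(0,\rho)$ put $v_\eta:=\G_\mu^{\Gw_{\eta,\rho}}[u^q]$ on the annular strip $\Gw_{\eta,\rho}$; then $u+v_\eta$ is $\L_\mu$-harmonic there with boundary values $u$ on $\Gs_\eta\cup\Gs_\rho$, so by comparison $u+v_\eta\le h$, whence $v_\eta\le h$. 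Letting $\eta\downarrow0$ and using monotone convergence of $G_\mu^{\Gw_{\eta,\rho}}$ to $G_\mu^{\Gw_\rho}$ gives $\G_\mu^{\Gw_\rho}[u^q]\le h<\infty$ a.e.; evaluating at a fixed $x_0\in\Gw_\rho$ and using $G_\mu^{\Gw_\rho}(x_0,y)\ge c(x_0)\gd(y)^{\ga_+}$ for $y$ near $\bdw$ yields $\int_{\Gw_\rho}u^q\gd^{\ga_+}\,dx<\infty$, i.e.\ $u\in L^q(\Gw;\gd^{\ga_+})$.

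For (ii), $u$ now has trace $\nu$ and, by (i), $u^q\in L^1(\Gw_{\bar\rho};\gd^{\ga_+})$, so the construction above (with $\rho=\bar\rho$) gives a positive $\L_\mu$-harmonic $w=u+\G_\mu^{\Gw_\rho}[u^q]$ in $\Gw_\rho$ with $\tr_{\bdw}w=\nu$. Using the Martin representation of $w$ in $\Gw_\rho$, split off the part carried by $\Gs_\rho$: $w=\K_\mu^{\Gw_\rho}[\nu]+g$ with $g\ge0$ an $\L_\mu$-harmonic function in $\Gw_{\bar\rho}$ having $\tr_{\bdw}g=0$; Theorem~\ref{subh2}(i) then gives $g(x)\le C\gd(x)^{\ga_+}$ near $\bdw$. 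Therefore
\[
\frac{u(x)}{\K_\mu^{\Gw_\rho}[\nu](x)}=1+\frac{g(x)}{\K_\mu^{\Gw_\rho}[\nu](x)}-\frac{\G_\mu^{\Gw_\rho}[u^q](x)}{\K_\mu^{\Gw_\rho}[\nu](x)},
\]
and \eqref{e-nontang-moderate-intro} will follow once both error quotients tend to $0$ along non-tangential approach for $\nu$-a.e.\ $y\in\bdw$. For the first this reduces, via the bound on $g$, to the Fatou-type inequality $\gd(x)^{\ga_+}/\K_\mu^{\Gw_\rho}[\nu](x)\to0$ non-tangentially $\nu$-a.e., proved by differentiating $\nu$ (Besicovitch) and invoking the \BHP in $\Gw_\rho$; for the second, since $\G_\mu^{\Gw_\rho}[u^q]$ is positive $\L_\mu$-superharmonic with zero normalized trace, its non-tangential limit relative to $\K_\mu^{\Gw_\rho}[\nu]$ is the density of its (vanishing) boundary mass, hence $0$ $\nu$-a.e., by the relative Fatou theory for $\L_\mu$ in $\Gw_\rho$.

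The linear bookkeeping in (i) is essentially routine granted the preliminary results — the two-sided kernel estimates, the $o(\eps^{\ga_-})$-flux property of Green potentials, and the $O(\gd^{\ga_+})$ characterization of zero-trace positive $\L_\mu$-superharmonic functions (Lemma~\ref{t-Hardy-loc}, Theorems~\ref{subh2}, \ref{superh1}). The genuine difficulty is concentrated in (ii): one must upgrade the $\Gs_\eps$-averaged condition \eqref{ntr-rho} to \emph{pointwise, non-tangential} control, which requires a full Fatou theory for $\L_\mu$ in the strip $\Gw_\rho$ — the \BHP, the comparison of $\K_\mu^{\Gw_\rho}[\nu]$ with $\gd^{\ga_+}$, and the differentiation of $\nu$ — and this is exactly where the hypothesis $\mu<1/4$, through $C_H(\Gw_\rho)=1/4$, is indispensable even when $C_H(\Gw)<1/4$.
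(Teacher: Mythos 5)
Your proposal is correct and follows essentially the same route as the paper: the identity $u+\G_\mu^{\Gw_\rho}[u^q]=\K_\mu^{\Gw_\rho}[\tilde\nu]$ in the strip, combined with the linear trace results (Theorem \ref{key-res}, Theorem \ref{subh1}, Theorem \ref{tr-sub} and Corollary \ref{PL-nu-converse}), gives (i), and the relative Fatou theory for $\L_\mu$ in $\Gw_\rho$ gives (ii) — which is exactly how the paper argues in Proposition \ref{mod-sol}, Theorem \ref{c-tr-sub} and Proposition \ref{nl-asymptotics}. Your only deviations are cosmetic: you re-derive the measure-finiteness step of Theorem \ref{tr-sub} by the annular exhaustion already used in the proof of Corollary \ref{PL-nu-converse}, and you control the $\Gs_\rho$-carried harmonic part via the $O(\gd^{\ga_+})$ bound and a density argument where the paper simply invokes Fatou's limit theorem for the quotient $\K_\mu^{\Gw_\rho}[\nu_0]/\K_\mu^{\Gw_\rho}[\nu]$.
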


\begin{theorem}\label{Knu-Lq} Let $\nu\in \M_+(\bdw)$. If $\K_\mu^{\Omega_\rho}[\nu]\in L^q(\Gw;\gd^{\ga_+})$ then \eqref{*nu} has a solution.
\end{theorem}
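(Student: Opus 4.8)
The plan is to reduce the statement to Theorem~\ref{Gw-ex-intro}, where essentially all of the analytic content already sits. If $\nu=0$, then $\K_\mu^{\Omega_\rho}[0]\equiv 0$, the integrability hypothesis is automatic, and $u\equiv 0$ solves \eqref{*nu}; so assume $\nu\neq 0$. The function $\K_\mu^{\Omega_\rho}[\nu]$ is defined on the strip $\Omega_\rho$, and only for $\rho\in(0,\bar\rho]$, since the local Martin kernel requires $C_H(\Omega_\rho)=1/4>\mu$ (Lemma~\ref{t-Hardy-loc}). Because the weight $\gd^{\ga_+}$ is bounded above and below on the compact set $\ovl{D_\rho}=\Gw\setminus\Omega_\rho$, however one extends $\K_\mu^{\Omega_\rho}[\nu]$ to $\Gw$ it adds there only a finite amount to $\int|\,\cdot\,|^q\gd^{\ga_+}$; hence the hypothesis $\K_\mu^{\Omega_\rho}[\nu]\in L^q(\Gw;\gd^{\ga_+})$ is equivalent to $\K_\mu^{\Omega_\rho}[\nu]\in L^q_{\delta^{\alpha_+}}(\Omega_\rho)$, which is precisely the hypothesis of Theorem~\ref{Gw-ex-intro}. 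That theorem then produces a positive solution of \eqref{*nu}.

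It remains to check that this condition does not depend on the particular $\rho$ used to state it, so that the formulation is unambiguous. Fix $0<\rho_1<\rho_2\le\bar\rho$ and set $g:=\K_\mu^{\Omega_{\rho_2}}[\nu]-\K_\mu^{\Omega_{\rho_1}}[\nu]$ on $\Omega_{\rho_1}$. Then $g$ is $\L_\mu$-harmonic in $\Omega_{\rho_1}$, so $|g|$ is a non-negative $\L_\mu$-subharmonic function there (Kato's inequality); moreover $|g|\le\K_\mu^{\Omega_{\rho_2}}[\nu]+\K_\mu^{\Omega_{\rho_1}}[\nu]$, which is $\L_\mu$-harmonic in $\Omega_{\rho_1}$, and $\tr_{\bdw}|g|=0$, because both kernels have normalized boundary trace $\nu$ and the normalized trace defined by \eqref{ntr-rho} is independent of the reference strip, so $\eps^{-\ga_-}\int_{\Sigma_\eps}|g|\,dS\to0$. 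Applying Theorem~\ref{subh2}(i) in $\Omega_{\rho_1}$ --- legitimate since $C_H^{\bdw}(\Omega_{\rho_1})=1/4$ by Lemma~\ref{t-Hardy-loc}, so that statement holds with $\bar\rho$ replaced by $\rho_1$ --- yields $|g(x)|\le c_\gb\,\gd(x)^{\ga_+}$ in $\Omega_\gb$ for every small $\gb$. Since $(q+1)\ga_+>-1$, this gives $g\in L^q(\Omega_\gb;\gd^{\ga_+})$, and by the triangle inequality in $L^q(\Omega_\gb;\gd^{\ga_+})$ the functions $\K_\mu^{\Omega_{\rho_1}}[\nu]$ and $\K_\mu^{\Omega_{\rho_2}}[\nu]$ have the same weighted $L^q$ integrability near $\bdw$; away from $\bdw$ both are bounded. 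Thus the hypothesis is the same for every admissible $\rho$.

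Little here is genuinely new: the construction is packaged inside Theorem~\ref{Gw-ex-intro}. The two points deserving a word are the identity $\tr_{\bdw}|g|=0$, which rests on the $\rho$-independence of the normalized boundary trace \eqref{ntr-rho} built into the extended trace framework, and the routine remark that Theorem~\ref{subh2} remains valid over a strip thinner than $\Omega_{\bar\rho}$, the local Hardy inequality of Lemma~\ref{t-Hardy-loc} being available there as well. Should one want a proof of Theorem~\ref{Knu-Lq} that does not route through Theorem~\ref{Gw-ex-intro}, the real effort would lie in the underlying construction: use $\K_\mu^{\Omega_\rho}[\nu]$ as a supersolution of \eqref{*-intro} near $\bdw$, glue it across $\Sigma_{\rho/2}$ to a Keller--Osserman large solution of \eqref{*-intro} on the interior $D_{\rho/2}$ (finite there, the Hardy potential being bounded away from $\bdw$), solve the Dirichlet problems for \eqref{*-intro} on the exhausting subdomains $D_{1/n}$ with boundary data $\K_\mu^{\Omega_\rho}[\nu]$ on $\Sigma_{1/n}$, pass to the monotone limit, and identify its normalized boundary trace with $\nu$ by means of Theorems~\ref{subh1}--\ref{superh1} together with Corollary~\ref{c-subh}. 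In that route the delicate step --- the genuine obstacle --- is ruling out a loss of boundary mass in the limit.
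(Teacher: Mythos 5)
Your reduction is correct and matches the paper's own treatment: Theorem~\ref{Knu-Lq} carries no separate argument there either, being (up to the trivial $\nu=0$ case and the identification of $L^q(\Gw;\gd^{\ga_+})$ with $L^q_{\delta^{\alpha_+}}(\Omega_\rho)$ for the kernel, which lives only on the strip) a restatement of Theorem~\ref{Gw-ex-intro}, proved in Section~3 as Theorem~\ref{Gw-ex} via the sandwich $U_{\nu,0}\le v_R\le U_{\nu,\infty}$ and the Keller--Osserman bound. Your auxiliary check of $\rho$-independence via Theorem~\ref{subh2}(i) (or, equivalently, Lemma~\ref{tr-unique} and Corollary~\ref{trace=0}, which give $g\sim\gd^{\ga_+}$ near $\bdw$) is sound, though not strictly needed for the statement as given.
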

In general,  the existence of a solution of \eqref{*nu} does not imply that $\K_\mu^{\Omega_\rho}[\nu]\in L^q(\Gw;\gd^{\ga_+})$. In fact, for any $\mu>0$ and $q>1$, one can construct functions $f\in L^1(\bdw)$ such that $\K_\mu^{\Omega_\rho}[f]\not\in L^q(\Gw;\gd^{\ga_+})$ while \eqref{*nu} has a solution whenever $\nu=f\in L^1(\bdw)$.

Let
\begin{equation}\label{tlq-intro}
q_{\mu,c}:=
\frac{N+\alpha_+}{N-1-\alpha_-}\forevery\mu<1/4.
\end{equation}

The next  result has been obtained in \cite{MMN}*{Theorems E and F} for $\mu\in(0,C_H(\Omega)$.  A similar result is presented in \cite{Veron}*{Theorems D and E}, under the assumption that $\Gw$ is a convex domain, in which case it is known that $C_H(\Gw)=1/4$.

\begin{proposition}
Let $\mu< 1/4$.
If $1<q<q_{\mu,c}$ then the boundary value problem \eqref{*nu} has a solution for every Borel measure $\nu\in\M^+(\bdw)$. Moreover, if $q\geq q_{\mu,c}$
then problem \eqref{*nu} has no solution when $\nu$ is the Dirac measure.
\end{proposition}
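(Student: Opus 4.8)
The plan is to reduce the problem, via the existence criterion of Theorem~\ref{Knu-Lq} (and Theorem~\ref{Gw-ex-intro}), to the question of when $\K_\mu^{\Gw_\rho}[\nu]\in L^q(\Gw;\gd^{\ga_+})$, and then to estimate the weighted $L^q$ norm of the Martin kernel by its boundary behaviour. First I would fix $\rho\in(0,\bar\rho]$ and recall that since $C_H(\Gw_\rho)=1/4$ by Lemma~\ref{t-Hardy-loc}, the Martin kernel $\K_\mu^{\Gw_\rho}$ is well defined for every $\mu<1/4$, and that in a smooth domain it satisfies the two-sided estimate
\begin{equation*}
\K_\mu^{\Gw_\rho}(x,y)\approx \frac{\gd(x)^{\ga_+}}{|x-y|^{N-1+\ga_+-\ga_-}}
=\frac{\gd(x)^{\ga_+}}{|x-y|^{N-1+2\sqrt{1/4-\mu}}}
\end{equation*}
for $x$ near $\bdw$ and $y\in\bdw$ (this is the standard Martin kernel estimate for $\L_\mu$, used throughout \cite{MMN}; here I am using $\ga_++\ga_-=1$). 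Integrating against $\nu$ and then raising to the power $q$ and multiplying by $\gd(x)^{\ga_+}$, I would be left to decide finiteness of $\int_{\Gw_\rho}\gd(x)^{(q+1)\ga_+}\big(\int_{\bdw}|x-y|^{-(N-1+2\sqrt{1/4-\mu})}\,d\nu(y)\big)^q\,dx$.

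For the \textbf{subcritical case} $1<q<q_{\mu,c}$: I would bound $\K_\mu^{\Gw_\rho}[\nu]$ from above by replacing $\nu$ with its total mass $\|\nu\|$ placed, worst case, so as to maximise the singular integral — more precisely, using that $\nu$ is a finite measure and a standard convexity/duality estimate for the Riesz-type kernel over the $(N-1)$-dimensional boundary, one reduces to the single Dirac mass. So it suffices to show $\K_\mu^{\Gw_\rho}[\gd_y]\in L^q(\Gw;\gd^{\ga_+})$ for $y\in\bdw$. Working in boundary-flattening coordinates near $y$, with $\gd(x)\approx x_N$ and $|x-y|\approx |x|$, the integral becomes (up to constants and a bounded remainder)
\begin{equation*}
\int_{\{|x|<\rho,\,x_N>0\}} \frac{x_N^{(q+1)\ga_+}}{|x|^{q(N-1+2\sqrt{1/4-\mu})}}\,dx,
\end{equation*}
and a scaling/polar-coordinates computation shows this converges exactly when $(q+1)\ga_+ + N - q(N-1+2\sqrt{1/4-\mu})>0$; rearranging and using $\ga_-=1-\ga_+$, $2\sqrt{1/4-\mu}=\ga_+-\ga_-$, this inequality is precisely $q<\frac{N+\ga_+}{N-1-\ga_-}=q_{\mu,c}$. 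Hence $\K_\mu^{\Gw_\rho}[\nu]\in L^q(\Gw;\gd^{\ga_+})$ for every finite $\nu$, and Theorem~\ref{Knu-Lq} (or Theorem~\ref{Gw-ex-intro}) yields a solution of \eqref{*nu}.

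For the \textbf{supercritical/critical case} $q\ge q_{\mu,c}$ with $\nu=\gd_y$: I would argue by contradiction. If $u$ were a positive solution of \eqref{*nu} with $\tr_{\bdw}u=\gd_y$, then by Theorem~\ref{limit-intro}(i) we would have $u\in L^q(\Gw;\gd^{\ga_+})$, and by \eqref{e-nontang-moderate-intro} together with a local lower bound, $u\gtrsim \K_\mu^{\Gw_\rho}[\gd_y]$ near $y$ (one uses that $u$ dominates, up to a constant, the $\L_\mu$-potential generated by its own trace — a comparison/Fatou-type argument as in \cite{MMN}). But the computation above shows $\K_\mu^{\Gw_\rho}[\gd_y]\notin L^q(\Gw;\gd^{\ga_+})$ exactly when $q\ge q_{\mu,c}$, contradicting $u\in L^q(\Gw;\gd^{\ga_+})$. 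Therefore no solution exists.

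The \textbf{main obstacle} is the subcritical reduction from a general finite measure $\nu$ to the Dirac mass: one needs a uniform-in-$y$ weighted estimate $\|\K_\mu^{\Gw_\rho}[\gd_y]\|_{L^q(\Gw;\gd^{\ga_+})}\le C$ and then an $L^1\to L^q_{\gd^{\ga_+}}$ boundedness of the operator $\nu\mapsto\K_\mu^{\Gw_\rho}[\nu]$, which is not merely Minkowski's integral inequality because the exponent $q>1$ works against us; the cleanest route is to note $\K_\mu^{\Gw_\rho}[\nu](x)\le \|\nu\|\,\sup_{y\in\bdw}\K_\mu^{\Gw_\rho}(x,y)$ is \emph{too lossy}, so instead I would split $\Gw_\rho$ into dyadic shells $\{2^{-k-1}\rho\le\gd(x)<2^{-k}\rho\}$, on each shell use that $x\mapsto\int|x-y|^{-(N-1+2\sqrt{1/4-\mu})}d\nu(y)$ has weak-type control from the finiteness of $\nu$, sum a geometric series in $k$ whose ratio is $<1$ precisely under $q<q_{\mu,c}$, and thereby recover the full estimate. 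This is essentially the argument behind \cite{MMN}*{Theorems E and F} and \cite{Veron}*{Theorems D and E}, the only new point being that $\Gw$ need not be convex and $\rho$ is chosen via Lemma~\ref{t-Hardy-loc} so that all constants depend only on the local geometry of $\bdw$.
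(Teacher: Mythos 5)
Your overall architecture is exactly the paper's: reduce existence to the weighted integrability $\K_\mu^{\Gw_\rho}[\nu]\in L^q(\Gw;\gd^{\ga_+})$ via Theorem \ref{Knu-Lq}/Theorem \ref{Gw-ex-intro}, compute the Dirac-mass threshold from the Martin kernel estimate $K_\mu^{\Gw_\rho}(x,y)\sim \gd(x)^{\ga_+}|x-y|^{2\ga_- -N}$ (your exponent computation giving $q<q_{\mu,c}$ is correct), and obtain nonexistence for $\nu=\gd_y$, $q\ge q_{\mu,c}$, from Theorem \ref{limit-intro} (membership in $L^q(\Gw;\gd^{\ga_+})$ plus the non-tangential asymptotics), which is precisely how the paper argues via Lemma \ref{qcmu} and Proposition \ref{nl-asymptotics}.

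The one place you go astray is your self-identified ``main obstacle''. The passage from a single Dirac mass to a general $\nu\in\M^+(\bdw)$ is \emph{exactly} Minkowski's integral inequality, which for $q\ge 1$ gives
\begin{equation*}
\Big\|\int_{\bdw}K_\mu^{\Gw_\rho}(\cdot,y)\,d\nu(y)\Big\|_{L^q(\Gw_\rho;\gd^{\ga_+})}
\le \int_{\bdw}\big\|K_\mu^{\Gw_\rho}(\cdot,y)\big\|_{L^q(\Gw_\rho;\gd^{\ga_+})}\,d\nu(y)
\le c\,\|\nu\|,
\end{equation*}
since the kernel estimate, hence the norm bound, is uniform in $y\in\bdw$. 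This is the paper's Lemma \ref{qcmu}; the exponent $q>1$ does not ``work against'' you here, because you are not bounding the kernel pointwise by its sup over $y$ but using the triangle inequality for the $L^q$ norm under the measure $\nu$. Your proposed substitute (dyadic shells in $\gd(x)$ with ``weak-type control'' and a geometric series) is therefore unnecessary, and as written it is only a sketch, not a proof — so the written argument has a gap at precisely the step you flagged, though it is repaired in one line by Minkowski. A second, minor point: in the nonexistence part the lower bound $u\ge \tfrac12 K_\mu^{\Gw_\rho}(\cdot,y)$ furnished by \eqref{e-nontang-moderate-intro} holds only along a non-tangential cone at $y$; you should note that on such a cone $\gd(x)\sim|x-y|$, so the integral of $K_\mu^{\Gw_\rho}(\cdot,y)^q\gd^{\ga_+}$ restricted to the cone already diverges exactly when $q\ge q_{\mu,c}$, which is what makes the contradiction with $u\in L^q(\Gw;\gd^{\ga_+})$ go through.
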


In the next proposition, the existence statement is a consequence of Theorem \ref{Knu-Lq}. The non-existence part is more subtle.

\begin{proposition}\label{L1data-intro}
(i) For every $\mu<1/4$ put
$$q^*_\mu=\begin{cases} \infty& \quad\text{if }\mu\geq0\\
1-\frac{2}{\alpha_-} & \quad\text{if }\mu<0.
\end{cases}$$

If $1<q< q^*_\mu$ then problem \eqref{*nu} has a solution for every measure $\nu=fdS$, $f\in L^1(\bdw)$.

(ii) If $q\geq q^*_\mu$ then problem \eqref{*nu} has no solution for any $\nu\in \M_+(\bdw)\sms \{0\}$.
\end{proposition}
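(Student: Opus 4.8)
Since $q^*_\mu=\infty$ when $\mu\ge 0$, part~(ii) is substantive only for $\mu<0$; for such $\mu$ one has $\ga_-<0<\ga_+$.

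\emph{Part (i).}
We may take $0\le f\in L^1(\bdw)$. The plan is to settle first the case of bounded $f$ by a direct appeal to Theorem~\ref{Knu-Lq}, and then to pass to a monotone limit. If $f\in L^\infty(\bdw)$ then $\K_\mu^{\Gw_\rho}[f\,dS]\le\|f\|_\infty\,\K_\mu^{\Gw_\rho}[dS]$, and the local two–sided estimate of the Martin kernel gives $\K_\mu^{\Gw_\rho}[dS]\asymp\gd^{\ga_-}$ in $\Gw_\rho$ (for $\mu=0$ this is just the constant $1$, since $\ga_-=0$). As $\ga_-=1-\ga_+$, the one–variable integral $\int_0^\rho\gb^{q\ga_-+\ga_+}\,d\gb$ is finite precisely when $q\ga_-+\ga_+>-1$, i.e.\ precisely when $q<q^*_\mu$ (automatic when $\ga_-\ge0$). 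Hence $\K_\mu^{\Gw_\rho}[f\,dS]\in L^q(\Gw;\gd^{\ga_+})$ for bounded $f$ and $q<q^*_\mu$, and Theorem~\ref{Knu-Lq} produces a solution. For a general $0\le f\in L^1(\bdw)$ put $f_n:=\min(f,n)$ and run the monotone iteration underlying Theorem~\ref{Knu-Lq} starting from the supersolution $\K_\mu^{\Gw_\rho}[f_n\,dS]$; since the scheme is order preserving and $\K_\mu^{\Gw_\rho}[f_n\,dS]$ is nondecreasing in $n$, this yields solutions $u_n$ of \eqref{*nu} with $\nu=f_n\,dS$ that may be taken nondecreasing and satisfy $u_n\le\K_\mu^{\Gw_\rho}[f_n\,dS]\le\K_\mu^{\Gw_\rho}[f\,dS]$ in $\Gw_\rho$.

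\emph{Passing to the limit.}
Set $u:=\lim_n u_n$. Everything rests on the uniform (in $\xi\in\bdw$) bound $\sup_\xi\int_\Gw\K_\mu^{\Gw_\rho}(x,\xi)\,\gd(x)^{\ga_+}\,dx\le C$, which is part of the kernel estimates. First, by Tonelli, $\K_\mu^{\Gw_\rho}[f\,dS]\in L^1(\Gw;\gd^{\ga_+})$ with norm $\le C\|f\|_{L^1}$, so it is finite a.e.\ and hence $0\le u<\infty$ a.e. Second, from the representation $u_n=\K_\mu^{\Gw_\rho}[f_n\,dS]-\G_\mu[u_n^q]$ ($\G_\mu$ the Green operator of $\L_\mu$) we obtain $\G_\mu[u_n^q]\le\K_\mu^{\Gw_\rho}[f\,dS]$; integrating against $\gd^{\ga_+}$, using the symmetry of $\G_\mu$ and the lower bound $\G_\mu[\gd^{\ga_+}]\ge c\,\gd^{\ga_+}$ near $\bdw$, gives $\int_\Gw u_n^q\,\gd^{\ga_+}\,dx\le C\|f\|_{L^1}$ uniformly in $n$. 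By monotone convergence $u_n\to u$ and $u_n^q\to u^q$ in $L^1(\Gw;\gd^{\ga_+})$, while $\K_\mu^{\Gw_\rho}[f_n\,dS]\uparrow\K_\mu^{\Gw_\rho}[f\,dS]$, so letting $n\to\infty$ in the representation yields $u=\K_\mu^{\Gw_\rho}[f\,dS]-\G_\mu[u^q]$. Thus $u$ solves \eqref{*-intro} in $\Gw$, and since $u^q\in L^1(\Gw;\gd^{\ga_+})$ the potential $\G_\mu[u^q]$ has zero normalized trace, whence $\tr_{\bdw}u=\tr_{\bdw}\K_\mu^{\Gw_\rho}[f\,dS]=f\,dS$. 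I expect the one delicate point here to be keeping the iteration and the representation valid globally in $\Gw$ (and not merely in $\Gw_\rho$), handled just as in the proof of Theorem~\ref{Gw-ex-intro}; in my view this approximation step is where the whole difficulty of the proposition lies.

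\emph{Part (ii).}
Suppose, for contradiction, that $u$ solves \eqref{*nu} for some $\nu\in\M^+(\bdw)\setminus\{0\}$ with $q\ge q^*_\mu$; then $\mu<0$ and $\ga_-<0$. As a positive solution of \eqref{*-intro}, $u$ is $\L_\mu$-subharmonic in $\Gw_{\bar\rho}$ and has normalized boundary trace $\nu$; by Theorem~\ref{subh1} it is dominated in some $\Gw_\rho$ by an $\L_\mu$-harmonic function, so since $\tr_{\bdw}u=\nu>0$, Theorem~\ref{subh2}(ii) gives $\int_{\Gs_\gb}u\,dS\ge c^{-1}\gb^{\ga_-}$ for all small $\gb$ (equivalently, this lower bound follows from the definition of $\tr_{\bdw}$ together with the normalization of the Martin kernel). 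On the other hand, by Theorem~\ref{limit-intro}(i), $u\in L^q(\Gw;\gd^{\ga_+})$. Slicing with the coarea formula along the level sets of $\gd$ (where $|\nabla\gd|=1$ in $\Gw_{\bar\rho}$) and applying Jensen's inequality on each $\Gs_\gb$, whose surface measure stays bounded as $\gb\to0$, we obtain
\[
\infty>\int_{\Gw_\rho}u^q\,\gd^{\ga_+}\,dx=\int_0^\rho\gb^{\ga_+}\Big(\int_{\Gs_\gb}u^q\,dS\Big)d\gb\ge c'\int_0^\rho\gb^{\ga_+}\Big(\int_{\Gs_\gb}u\,dS\Big)^{q}d\gb\ge c''\int_0^\rho\gb^{\,\ga_++q\ga_-}\,d\gb .
\]
Since $\ga_-<0$ and $q\ge q^*_\mu=1-2/\ga_-$, we have $\ga_++q\ga_-\le\ga_++q^*_\mu\ga_-=\ga_++\ga_--2=-1$, so the last integral diverges—a contradiction. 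Here the only real task is to assemble the right ingredients (the subharmonic trichotomy of Theorems~\ref{subh1}--\ref{subh2} and the $L^q_{\gd^{\ga_+}}$ characterization of Theorem~\ref{limit-intro}); the remaining computation is elementary.
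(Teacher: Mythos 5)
Your argument is correct, and while part (i) follows essentially the paper's own path, part (ii) takes a genuinely different and in fact cleaner route. For (i) the paper also treats bounded $f$ first (via the estimate $\Gg_1*\nu\le c\|f\|_\infty|\ln\gd|$ and Proposition \ref{vg-measure}, rather than your direct bound $\K_\mu^{\Gw_\rho}[dS]\le c\,\gd^{\ga_-}$, which gives the same integrability condition $q\ga_-+\ga_+>-1$, i.e.\ $q<q^*_\mu$) and then truncates $f_n=\min(f,n)$ and passes to a monotone limit; the paper justifies the limit only by the Keller--Osserman bound, whereas you add the quantitative uniform estimate $\int u_n^q\gd^{\ga_+}\le C\|f\|_{L^1}$ by duality and pass to the limit in the representation formula, which is a more explicit (and perfectly sound) way to identify the trace of the limit. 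One small imprecision: for solutions in all of $\Gw$ the inequality $u_n\le\K_\mu^{\Gw_\rho}[f_n\,dS]$ fails near $\Gs_\rho$; the correct representation in $\Gw_\rho$ carries an extra term $\K_\mu^{\Gw_\rho}[\gl_n]$ with $\gl_n$ the measure trace of $u_n$ on $\Gs_\rho$, but by Keller--Osserman $\gl_n$ is uniformly bounded and contributes only $O(\gd^{\ga_+})$ near $\bdw$, so your estimates and the trace identification survive unchanged --- this is exactly the bookkeeping you defer to the proof of Theorem \ref{Gw-ex-intro}, so it is not a gap. For (ii) the paper combines the lower bound $\int_{\Gs_\gb}u\,dS\ge c\gb^{\ga_-}$ with the pointwise Keller--Osserman bound $u\le c\gd^{-2/(q-1)}$, which kills the case $\ga_-<-2/(q-1)$ immediately but forces a separate, more delicate argument at the critical exponent $q=q^*_\mu$ (showing $u\sim\gd^{-2/(q-1)}$ and comparing with the maximal solution). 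Your replacement --- $u\in L^q(\Gw;\gd^{\ga_+})$ from Theorem \ref{limit-intro}(i), coarea, and Jensen on each $\Gs_\gb$, giving divergence of $\int_0^\rho\gb^{\ga_++q\ga_-}d\gb$ whenever $\ga_++q\ga_-\le-1$ --- handles the strict and critical cases in one stroke and avoids the maximal-solution argument altogether; since Theorem \ref{limit-intro} (Proposition \ref{mod-sol}) is independent of the present proposition, there is no circularity. This is a nice simplification of the paper's treatment of the borderline case.
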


\begin{remark}
If $\mu<0$ then $\ga_-<0$ so that $q^{*}_\mu>1$ and $q_{\mu,c}<q^*_\mu$.
\end{remark}
\medskip

The  paper is organised as follows. In Section \ref{sect-2} we study the linear problem. We derive estimates of the Green and Martin kernels of $\L_\mu$ in $\Gw_\rho$
and discuss the boundary behavior of local positive $\L_\mu$-sub and superharmonic functions in terms of the normalized trace.

In Section \ref{sect-3} these results are applied to the study of the nonlinear boundary value problem \eqref{*nu}.

\section{Linear equation and normalised boundary trace}\label{sect-2}

\subsection{The local behavior of Green and Martin kernels.} We  recall some results concerning Schr\"odinger equations, that are needed in what follows. The results are due to Ancona \cite{Ancona-87}.
Let $D$ be a bounded Lipschitz domain and consider the Schr\"odinger operator $\L^V=\Gd +V$ where $V\in C(D)$ is a potential \sth, for some constant $a>0$, $|V(x)|\leq a \, \dist(x,\prt D)^{-2}$ and $\L^V$ possesses a positive supersolution. (If $V\le 0$ there is always a supersolution namely, $u=1$.)  Then $\L^V$ has a Green function $G^V$ and Martin kernel $K^V$ in $D$. The Martin boundary coincides with $\prt D$ and the following holds,
\begin{theorem}[Representation Theorem]\label{t-repr}
For every $\nu\in\M^+(\partial D)$ the function
$$\K^V[\nu](x):=\int_{\partial D}K^V(x,y)d\nu(y),\quad x\in D,$$
is $\L^V$-harmonic in $D$.
Conversely, if $u$ is a positive  $\L^V$-harmonic function in $D$ then
there exists a unique measure $\nu\in\M^+(\partial D)$ such that $u=\K^V[\nu]$.
\end{theorem}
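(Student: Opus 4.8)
The plan is to derive Theorem \ref{t-repr} from the abstract Martin representation theorem combined with the boundary Harnack principle for the Schr\"odinger operator $\L^V$; in the write-up this is precisely the content of Ancona's work, and one would simply cite \cite{Ancona-87}, but the structure of the argument is as follows. \emph{Step 1 (Green function and subcriticality).} Since $\L^V$ possesses a positive supersolution $w$ in $D$ and $|V(x)|\le a\,\dist(x,\partial D)^{-2}$, the operator $\L^V$ is subcritical. Exhausting $D$ by smooth domains $D_n\Subset D_{n+1}\Subset D$ with $\bigcup_n D_n=D$, the Dirichlet Green functions $G^V_{D_n}(x,\cdot)$ form an increasing sequence which, after normalisation by $w$, is dominated by a harmonic majorant, hence converges to a finite kernel $G^V$, the Green function of $\L^V$ in $D$. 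For each fixed $x$ the function $G^V(x,\cdot)$ is positive, $\L^V$-harmonic in $D\setminus\{x\}$, and tends to $0$ at $\partial D$ in the minimal-fine sense.

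\emph{Step 2 (Martin boundary equals $\partial D$).} Fix $x_0\in D$ and form the Martin compactification $\widehat D$ of $D$ associated with the kernel $(x,y)\mapsto G^V(x,y)/G^V(x_0,y)$; put $\Delta D:=\widehat D\setminus D$. The crucial input is Ancona's boundary Harnack principle for $\L^V$ on the Lipschitz domain $D$: it implies that for each Euclidean boundary point $\xi\in\partial D$ the ratio $G^V(x,y)/G^V(x_0,y)$ has a limit $K^V(x,\xi)$ as $y\to\xi$, that distinct boundary points yield non-proportional limits, and hence that $\xi\mapsto K^V(\cdot,\xi)$ is a homeomorphism of $\partial D$ onto $\Delta D$. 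A second application of the boundary Harnack principle shows that each $K^V(\cdot,\xi)$ is a \emph{minimal} positive $\L^V$-harmonic function, so $\Delta D$ coincides with the minimal Martin boundary.

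\emph{Step 3 (representation, uniqueness, and the converse).} Applying the abstract Martin representation theorem to the (simplicial) cone of positive $\L^V$-harmonic functions, every such $u$ has the form $u=\int_{\Delta D}K^V(\cdot,\xi)\,d\nu(\xi)$ for a unique finite positive measure $\nu$ carried by the minimal Martin boundary, and by Step 2 this measure lies in $\M^+(\partial D)$; uniqueness is exactly the statement that minimal boundary points are distinguished by their minimal harmonic functions, which is again Step 2. For the converse, given $\nu\in\M^+(\partial D)$, each $K^V(\cdot,\xi)$ is $\L^V$-harmonic, so $\int_D K^V(x,\xi)(-\L^V\varphi)(x)\,dx=0$ for every $\varphi\in C^\infty_c(D)$; since $(x,\xi)\mapsto K^V(x,\xi)$ is bounded on $\supp\varphi\times\partial D$ (Harnack inequality together with the normalisation $K^V(x_0,\xi)=1$), Fubini's theorem gives $\int_D\K^V[\nu]\,(-\L^V\varphi)\,dx=0$, i.e. $\L^V\K^V[\nu]=0$ in the distributional sense, and elliptic regularity makes $\K^V[\nu]$ a genuine positive $\L^V$-harmonic function.

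\emph{Main obstacle.} The heart of the proof is Step 2: the boundary Harnack principle for the Schr\"odinger operator $\L^V$ on Lipschitz domains and the ensuing identification of the Martin boundary with $\partial D$, together with minimality of all its points. This requires the full strength of Ancona's analysis — and it is exactly the place where one invokes \cite{Ancona-87}. Granted that, Steps 1 and 3 are standard Martin and Riesz potential theory.
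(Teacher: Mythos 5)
Your proposal is correct and follows essentially the same route as the paper, which offers no proof of this statement at all: it is presented as a known result and attributed entirely to Ancona \cite{Ancona-87}, exactly the reference on which your Step 2 (boundary Harnack principle, identification of the Martin boundary with $\partial D$, minimality of its points) rests. Your Steps 1 and 3 are the standard Green-function and Martin--Riesz machinery underlying that citation, so there is no substantive divergence from the paper's treatment.
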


In order to state the \BHP we need additional notation.
Let $y\in \prt D$ and let $\gx=\gx^y$ be a local set of coordinates centered at $y$ \sth the $\gx_1$-axis is in the direction of an interior pseudo normal $\mathbf{n_y}$. (If $D$ is a $C^1$ domain we may take $\mathbf{n_y}$ to be the interior unit normal.)
Denote
$$T_y(r,\rho)= \{\gx=(\gx_1,\gx'): |\gx_1|<\rho,\quad |\gx'|<r\}.$$
 Assume that $r$ and $\rho$ are so chosen that
$$\gw_y:=T_y(r,\rho)\cap D=\{\gx: F_y(\gx')<\gx_1<\rho,\, |\gx'|<r\}$$
where $F_y$ is a Lipschitz function in $\BBR^{N-1}$, with Lipschitz constant $\Gl$,  $F_y(0)=0$ and $12\Gl  < \rho/r$. Since $D$ is a bounded Lipschitz domain $\Gl,r,\rho$ can be chosen independently of $y\in
\prt D$.

Let $A\in T(r,\rho)$ be the point \sth  $\gx(A)=(\rho/2,0)$. Then the \BHP reads as follows:
If $u,v$ are positive $\L_\mu$-harmonic functions in $\gw_y$ vanishing continuously on $\bdw\cap T_y(r,\rho)$ then
\begin{equation}\label{BHP}
C^{-1} \frac{u(A)}{v(A)}\leq \frac{u(\gx)}{v(\gx)}\leq C \frac{u(A)}      {v(A)} \forevery \gx\in T_y(r/2,\rho/2)\cap D,
\end{equation}
where the constant $C$ depends only on $N, M,\rho/r$ and the Lipschitz constant of $F_y$, say $\Gl$.  ($\Gl$ may be taken to be independent of $y\in \prt D$.)

We also need the following consequence of the \BHP (c.f.\ Ancona \cite{An-CR82}*{Lemma 3.5}):
there exist positive numbers $c , t_0$ \sth
\begin{equation}\label{K.G}
c^{-1}|x-y|^{2-N}\leq K^V(x,y)G^V(x,x_0)\leq c|x-y|^{2-N}
\end{equation}
for every $y\in \bdw'$ and $x$ on the interior pseudo normal at $y$ \sth $|x-y|\leq t_0$.

Recall that if $V(x)=\mu \dist(x,\prt D)^{-2}$ and $\mu< C_H(D)$ then $\L^V$ has a positive supersolution.
In particular, if $D=\Gw_{\bar\rho}$ then $C_H(D)=1/4$. Therefore, in this case, the above results apply to the operator $\L_\mu=\Delta+\frac{\mu}{\gd_\Gw^2}$ for every $\mu<1/4$.
\medskip

\noindent{\em Notation.} \hskip 2mm
Let $D$ be a subdomain of $\Gw$ and denote
$$\L_{\mu,D}= \Delta+\frac{\mu}{\gd_D^2}\quad \text{where} \quad \gd_D(x)=\dist (x,\prt D).$$
Assume that $\mu<C_H(D)$ and let $D'$ be a subdomain of $D$. Obviously $C_H(D')\geq C_H(D)$. Denote the Green kernel (resp. the Martin kernel) of $\L_\mu$ in $D$ by $G_{\mu}^D$ (resp. $K_\mu^D$). Denote the Green kernel (resp. the Martin kernel) of $\L_{\mu,D}$ in $D'$ by $G_{\mu,D}^{D'}$ (resp. $K_{\mu,D}^{D'}$).

\begin{lemma}\label{KOR1} Assume that $\mu<1/4$. Let $\bar\rho$ be as in Lemma \ref{t-Hardy-loc} and $t\in (0,\bar\rho)$. Put $U = \Gw_{\bar\rho}=[\gd(x)<\bar\rho]$, $\Gw_t=[\gd(x)<t]$, $U_t = [\bar\rho>\gd(x)>t]$.
Then,
\begin{equation}\label{Gr-above}\BAL
&G_\mu^{\Gw_{t/2}}(x,y)\leq \\
&C(t)\inf(|x-y|^{2-N}, \gd(x)^{\ga_+} \gd(y)^{\ga_+}|x-y|^{2\ga_- -N}) \forevery x,y\in \Gw_{t/2}
\EAL\end{equation}
\end{lemma}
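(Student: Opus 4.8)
Here the right-hand side of \eqref{Gr-above} is the minimum of two quantities of the common homogeneity $2-N$ — the \emph{interior} term $|x-y|^{2-N}$ and the \emph{boundary} term $\gd(x)^{\ga_+}\gd(y)^{\ga_+}|x-y|^{2\ga_--N}$, the latter recording that $G_\mu^{\Gw_{t/2}}(\cdot,y)$ vanishes like $\gd^{\ga_+}$ on the singular part $\prt\Gw$ of $\prt\Gw_{t/2}$ (note $\ga_++\ga_++(2\ga_--N)=2-N$ since $\ga_++\ga_-=1$, and $2\ga_+=2-2\ga_-$). The plan is to prove the two one-sided bounds $G_\mu^{\Gw_{t/2}}(x,y)\le C(t)|x-y|^{2-N}$ and $G_\mu^{\Gw_{t/2}}(x,y)\le C(t)\gd(x)^{\ga_+}\gd(y)^{\ga_+}|x-y|^{2\ga_--N}$ separately and take the minimum. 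For the set-up, shrinking $\bar\rho$ if necessary I would assume $\gd=\gd_\Gw\in C^2(\ol{\Gw_{\bar\rho}})$ and each $\Gw_\rho$ ($\rho\le\bar\rho$) a bounded smooth domain. Since $t<\bar\rho$, Lemma \ref{t-Hardy-loc} gives $C_H(\Gw_{t/2})=1/4>\mu$, so Lemma \ref{t-GST} yields a positive $\L_\mu$-supersolution in $D:=\Gw_{t/2}$; hence $G_\mu^D$ exists, and writing $\L_\mu=\Gd+V$ with $|V|=|\mu|\gd^{-2}\le a\,\dist(\cdot,\prt D)^{-2}$ on $D$ for a suitable $a=a(\mu,t)$ (the two distances coincide near $\prt\Gw$, while $\gd$ is bounded away from $0$ near $\Gs_{t/2}$), the whole apparatus of Subsection 2.1 — the Representation Theorem, the \BHP \eqref{BHP}, its consequence \eqref{K.G} — applies to $\L_\mu$ on $D$, the \BHP constants being universal for boundary boxes of radius $\le c_0\bar\rho$ centred on $\prt\Gw$ (and merely $t$-dependent for boxes on the smooth hypersurface $\Gs_{t/2}$). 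Finally I would record the elementary barrier computation $\L_\mu(\gd^{\ga_\pm})=\ga_\pm\gd^{\ga_\pm-1}\Gd\gd=O(\gd^{\ga_\pm-1})$ — the coefficient of $\gd^{\ga_\pm-2}$ vanishing because $\ga_\pm^2-\ga_\pm+\mu=0$ — which after correction by $\mp c\gd^{\ga_\pm+1}$ gives in $\Gw_{\bar\rho}$ a positive $\L_\mu$-supersolution and a positive $\L_\mu$-subsolution both $\asymp\gd^{\ga_+}$, and hence (solving $\L_\mu w=0$ in a boundary box with data $0$ on $\prt\Gw$ and $1$ elsewhere, and sandwiching $w$ between these barriers) a positive \emph{local} $\L_\mu$-harmonic $h_0\asymp\gd^{\ga_+}$ to feed into \eqref{BHP}.

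\emph{Interior bound.} For each $\gl\ge 1$ there is $C=C(\gl,t)$ with $G_\mu^D(z,w)\le C|z-w|^{2-N}$ whenever $|z-w|\le\gl(\gd(z)\wedge\gd(w))$: after the dilation normalising $B(z,\tfrac12\gd(z))\subset\{\gd>\tfrac12\gd(z)\}$ to unit size, $\L_\mu$ becomes uniformly elliptic with bounded zeroth-order coefficient ($|V|\gd(z)^2\le 4|\mu|$), and one invokes the classical upper Green-function estimate — available here because the local Hardy inequality $C_H^{\prt\Gw}(\Gw_\rho)=1/4>\mu$ guarantees local positive supersolutions — together with the interior Harnack inequality to absorb the ratio $\gl$; the $t$-dependence enters only at scales $|z-w|\gtrsim t$, where one chains Harnack inequalities across $\{\gd>t/4\}$. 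Applying this with $z=x$, $w=y$, $\gl=8$ disposes of the case $|x-y|\le 8(\gd(x)\wedge\gd(y))$, in which moreover $\gd(x)^{\ga_+}\gd(y)^{\ga_+}|x-y|^{2\ga_--N}\gtrsim|x-y|^{2-N}$, so the minimum in \eqref{Gr-above} is comparable to $|x-y|^{2-N}$.

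\emph{Boundary bound.} Assume $|x-y|>8(\gd(x)\wedge\gd(y))$; by symmetry of $G_\mu^D$ suppose $\gd(x)\le\gd(y)$, so $\gd(x)<\tfrac18|x-y|$ and, since $\gd$ is $1$-Lipschitz, $|\bar x-y|\ge|x-y|-\gd(x)>\tfrac78|x-y|$, where $\bar x\in\prt\Gw$ is the projection of $x$. Put $r:=\min(\tfrac12|x-y|,\,c_0\bar\rho)$; then the boundary box $\gw_{\bar x}$ of radius $\asymp r$ at $\bar x$ contains $x$, excludes $y$, and its corkscrew point $A_x$ has $\gd(A_x)\asymp r$, $|A_x-\bar x|\asymp r$, $|A_x-y|\asymp r$. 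Since $G_\mu^D(\cdot,y)$ and $h_0$ are positive $\L_\mu$-harmonic in $\gw_{\bar x}$ and vanish on $\prt\Gw\cap\gw_{\bar x}$, \eqref{BHP} and $h_0\asymp\gd^{\ga_+}$ give
\begin{equation*}
G_\mu^D(x,y)\ \le\ C\,\frac{h_0(x)}{h_0(A_x)}\,G_\mu^D(A_x,y)\ \le\ C\Big(\frac{\gd(x)}{r}\Big)^{\ga_+}G_\mu^D(A_x,y).
\end{equation*}
If also $\gd(y)<\tfrac18|x-y|$ one repeats the step in the second variable at $\bar y$, replacing $G_\mu^D(A_x,y)$ by $C(\gd(y)/r)^{\ga_+}G_\mu^D(A_x,A_y)$ with $\gd(A_y)\asymp r$ and $|A_x-A_y|\asymp|x-y|$; otherwise $\gd(y)\ge\tfrac18|x-y|$ and one keeps $y$. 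In either case the surviving reference value is between points at height $\asymp|x-y|$ (resp.\ $\gtrsim|x-y|$) at mutual distance $\asymp|x-y|$, hence $\le C(t)|x-y|^{2-N}$ by the interior bound (or by a Harnack chain across $\{\gd>t/4\}$ when $|x-y|\gtrsim t$, which is where $C$ becomes $t$-dependent). Assembling, using $r\asymp|x-y|$ together with $2\ga_+=2-2\ga_-$ and $2-\ga_+-2\ga_-=\ga_+$, gives $G_\mu^D(x,y)\le C(t)\gd(x)^{\ga_+}\gd(y)^{\ga_+}|x-y|^{2\ga_--N}$ in the first subcase and $G_\mu^D(x,y)\le C(t)\gd(x)^{\ga_+}|x-y|^{2-\ga_+-N}$ in the second; in both, the defining inequalities ($\gd(x),\gd(y)<\tfrac18|x-y|$, resp.\ $\gd(x)<\tfrac18|x-y|\le\gd(y)$) make these $\le C(t)$ times \emph{both} $|x-y|^{2-N}$ and $\gd(x)^{\ga_+}\gd(y)^{\ga_+}|x-y|^{2\ga_--N}$, completing \eqref{Gr-above}.

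The points I expect to be delicate are: (i) the interior bound genuinely uses the Hardy condition $\mu<1/4$ rather than a Kato-class hypothesis, since $\mu\gd^{-2}$ is not uniformly Kato up to $\prt\Gw$; (ii) in the boundary case, arranging boxes and corkscrew points to avoid the poles — this works cleanly precisely because $\gd(x)<\tfrac18|x-y|$ forces $\bar x$, hence every point within $\asymp|x-y|$ of it, to lie $\gtrsim|x-y|$ away from $y$, so the single choice $r\asymp|x-y|$ simultaneously contains $x$, excludes $y$, and separates $A_x$ from $y$; and (iii) isolating the scales $|x-y|\gtrsim t$, where the thinness of the strip prevents the reference points from being pushed above height $\asymp t$ and thereby forces the constant to depend on $t$. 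Constructing $h_0$ and verifying the final exponent inequalities are routine.
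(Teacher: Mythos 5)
Your proposal is correct in outline, but it takes a genuinely different and much longer route than the paper. The paper's proof is essentially three lines: since $\gd_U=\gd_\Gw$ on $\Gw_{t/2}$, the operators $\L_\mu$ and $\L_{\mu,U}$ coincide there, so $G_\mu^{\Gw_{t/2}}=G_{\mu,U}^{\Gw_{t/2}}$; by monotonicity of the Green function with respect to the domain, $G_{\mu,U}^{\Gw_{t/2}}\le G_{\mu,U}^{U}$; and the sharp two-sided estimate for $G_{\mu,U}^U$ --- available because $C_H(U)=1/4>\mu$ --- is simply quoted from \cite{Tertikas} and \cite{MMN}*{(2.6)}, which gives \eqref{Gr-above} at once. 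You instead re-derive the upper bound from first principles: scale-invariant \BHP in boundary boxes, corkscrew reference points, a barrier-built local $\L_\mu$-harmonic function $h_0\asymp\gd^{\ga_+}$, and an interior bound $G\le C|x-y|^{2-N}$, with correct exponent bookkeeping in both subcases (including the observation that the case $\gd(x)\wedge\gd(y)\gtrsim|x-y|$ makes the two terms of the minimum comparable). This amounts to reconstructing, inside the strip, the proof of the estimate the paper cites; it is self-contained and uses exactly the machinery the paper deploys later for the Martin kernel (Theorem \ref{KOR}), but it carries the technical burdens you yourself flag --- uniformity of the BHP constant at all small scales, continuous vanishing of $G_\mu^{D}(\cdot,y)$ on $\prt\Gw$, the near-diagonal Green bound for a Hardy-type potential --- none of which the paper needs to address here, precisely because the point of working in $\Gw_{\bar\rho}$ is that $C_H(\Gw_{\bar\rho})=1/4$ makes the known $\mu<C_H$ theory (in particular the Filippas--Moschini--Tertikas estimate) directly quotable, with domain monotonicity doing the rest.
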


\proof
Note that
$\L_\mu=\L_{\mu,U}$ in $\Gw_{t/2}.$ Hence
$$G_{\mu}^{\Gw_{t/2}}=G_{\mu,U}^{\Gw_{t/2}}.$$
 It is well-known that the Green function is monotone with respect to the domain. Therefore $G_{\mu,U}^{\Gw_{t/2}}<G_{\mu,U}^{\Gw_{t}}$ which implies
\begin{equation}\label{Gr1}
 G_{\mu}^{\Gw_{t/2}}(x,y)\leq c G_{\mu,U}^{\Gw_t}(x,y) \forevery x,y\in \Gw_{t/2}.
\end{equation}
By \eqref{Gr1} and the estimate of the Green function of $\L_{\mu,U}$  (see \cite{Tertikas} and \cite{MMN}*{(2.6)}),
\begin{equation}\label{Gr2}\BAL
G_{\mu}^{\Gw_{t/2}}(x,y)&\leq cG_{\mu,U}^{\Gw_t}(x,y)\leq cG_{\mu,U}^U(x,y) \\
&\sim \inf(|x-y|^{2-N}, \gd(x)^{\ga_+} \gd(y)^{\ga_+}|x-y|^{2\ga_- -N})
\EAL\end{equation}
for every $x,y\in \Gw_{t/2}$. This implies \eqref{Gr-above}.
\qed

\begin{theorem}\label{KOR} Assume that $\mu<1/4$, let $\bar\rho$ be as in Lemma \ref{t-Hardy-loc} and let $t\in (0,\bar\rho/2)$. Using the notations of the previous lemma,
pick $x_t\in U_t$ and $x'_t\in \Gw_t$ \sth $\gd(x_t)=(t+\bar\rho)/2$ and $\gd(x'_t)=t/2$.
As usual  $G_0^{U}$ denotes  the Green function for  $-\Delta$ in $U$. A  similar notation is employed for the corresponding Martin kernels.
Then,
\begin{equation}\label{Gest}\BAL
 c_1(t)^{-1}G^U_{\mu,U}(x,x_t)\leq &G^U_\mu(x,x_t)\leq c_1(t)G^U_{\mu,U}(x,x_t) \forevery x\in \Gw_t\\
  c_2(t)^{-1}G_0^U(x,x'_t)\leq &G^U_\mu(x,x'_t)\leq c_2(t)G_0^U(x,x'_t) \forevery x\in U_t,
\EAL\end{equation}
and
\begin{equation}\label{Kest}\BAL
 c_3(t)^{-1}K^U_{\mu,U}(x,y)\leq &K_\mu^U(x,y)\leq c_3(t)K^U_{\mu,U}(x,y) \forevery (x,y)\in \Gw_t\times\bdw,\\
  c_4(t)^{-1}K_0^U(x,y)\leq &K_\mu^U(x,y)\leq c_4(t)K_0^U(x,y) \forevery (x,y)\in U_t\times\Gs_{\bar\rho}.
\EAL\end{equation}
\end{theorem}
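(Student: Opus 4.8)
The plan is to exploit that $U=\Gw_{\bar\rho}$ has two disjoint boundary pieces, the singular one $\bdw$ and the regular one $\Sigma_{\bar\rho}$, near which $\L_\mu$ reduces to two different model operators. Since $\bar\rho$ is small and $\gd$ is $1$-Lipschitz, $\gd_U=\gd$ on $\Gw_{\bar\rho/2}$, whence
\[
\L_\mu=\Delta+\frac{\mu}{\gd^2}=\Delta+\frac{\mu}{\gd_U^2}=\L_{\mu,U}\qquad\text{on }\Gw_{\bar\rho/2};
\]
on the other hand $\mu\gd^{-2}$ is bounded (by $|\mu|t^{-2}$) and smooth on $\overline{U_t}=\{t\le\gd\le\bar\rho\}$, so there $\L_\mu=-\Delta+V$ with $V\in L^\infty$. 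By Lemma~\ref{t-Hardy-loc} ($C_H(U)=1/4$) each of $\L_\mu$, $\L_{\mu,U}$, $-\Delta$ has a positive supersolution on $U$ and satisfies the hypotheses used in Theorem~\ref{t-repr}, \eqref{BHP}, \eqref{K.G}. I would first record two soft comparison facts. \emph{(I)} If $\L_1,\L_2$ are operators of this type on $U$ that coincide on $\Gw_{\bar\rho/2}$ and $z\in U\setminus\overline{\Gw_t}$, then $G^U_{\L_1}(\cdot,z)\asymp G^U_{\L_2}(\cdot,z)$ on $\Gw_t$: on $\Gw_{\bar\rho/2}$ both Green functions are positive, harmonic for the common operator, and vanish continuously on $\bdw$, so applying \eqref{BHP} in boxes $T_y(r,\rho)\cap U\subset\Gw_{\bar\rho/2}$ with $r,\rho$ fixed and small, and patching over $y\in\bdw$ via interior Harnack, gives comparability on some $\Gw_{t_1}$, while on the compact set $\{t_1\le\gd\le t\}\subset U\setminus\{z\}$ the two functions are positive and continuous and hence comparable. \emph{(II)} If $\L_1,\L_2$ are uniformly elliptic with bounded smooth coefficients near $\overline{U_t}$, admit positive supersolutions, and $z\in\Gw_t$, then $G^U_{\L_1}(\cdot,z)\asymp G^U_{\L_2}(\cdot,z)$ on $U_t$: the ratio $\psi$ of the two Green functions is positive and continuous on $U_t$, extends continuously and positively across the interior surface $\Sigma_t$, and, by boundary elliptic regularity together with the Hopf lemma at the smooth surface $\Sigma_{\bar\rho}$ (where each function vanishes linearly with non-vanishing normal derivative), also to $\Sigma_{\bar\rho}$; thus $\psi$ is continuous and positive on the compact set $\overline{U_t}$, hence bounded above and below.

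Granting (I) and (II), the two lines of \eqref{Gest} follow: for the first apply (I) with $\L_1=\L_\mu$, $\L_2=\L_{\mu,U}$ and pole $z=x_t\in U_t\subset U\setminus\overline{\Gw_t}$; for the second apply (II) with $\L_1=\L_\mu$, $\L_2=-\Delta$ and pole $z=x'_t$, which lies at $\gd=t/2$ and hence outside $\overline{U_t}$. All the constants so produced depend on $t$ only (through $\|V\|_\infty\le|\mu|t^{-2}$, the choice of $t_1$ and the geometry), which is exactly what is asserted.

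For \eqref{Kest} I would pass to Martin kernels via $K^D_{\L}(x,y)=\lim_{z\to y}G^D_{\L}(x,z)/G^D_{\L}(x_0,z)$ using the common reference point $x_0:=x_t$. For the first line ($y\in\bdw$) the device is \eqref{K.G}, applied to $\L_\mu$ and, separately, to $\L_{\mu,U}$ (legitimate since $C_H(U)=1/4$): for $x$ on the interior normal at $y$ with $|x-y|$ small,
\[
K^U_\mu(x,y)\,G^U_\mu(x,x_t)\ \asymp\ |x-y|^{2-N}\ \asymp\ K^U_{\mu,U}(x,y)\,G^U_{\mu,U}(x,x_t),
\]
so dividing and invoking the first line of \eqref{Gest} ($G^U_\mu(x,x_t)\asymp G^U_{\mu,U}(x,x_t)$ on $\Gw_t$) gives $K^U_\mu(x,y)\asymp K^U_{\mu,U}(x,y)$ along the normal ray at $y$. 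To spread this over all of $\Gw_t$ I would again use that $\L_\mu=\L_{\mu,U}$ near $\bdw$: away from $y$ the functions $K^U_\mu(\cdot,y)$, $K^U_{\mu,U}(\cdot,y)$ are positive $\L_\mu$-harmonic in boxes $T_{y'}(r,\rho)\cap U$ and vanish on $\bdw\cap T_{y'}(r,\rho)$, so \eqref{BHP} compares their ratio with its value at the corkscrew point $A_{y'}$, and patching yields comparability outside a small neighbourhood of $y$; inside that neighbourhood one combines the comparison just obtained on the normal ray with the scale‑invariant (Carleson‑type) boundary Harnack estimate for each of the two Martin kernels. For the second line ($y\in\Sigma_{\bar\rho}$) the argument of (II) applies with $\L_1=\L_\mu$, $\L_2=-\Delta$: the ratio $\psi=K^U_\mu(\cdot,y)/K^U_0(\cdot,y)$ is positive and continuous on $U_t$ and on $\Sigma_t$, extends continuously and positively to $\Sigma_{\bar\rho}\setminus\{y\}$ by the Hopf lemma, and is bounded above and below near $y$ because at a smooth boundary point the Martin kernel of a uniformly elliptic operator with bounded coefficients is comparable to that of $-\Delta$ (both behaving like $\gd_U(x)|x-y|^{-N}$ as $x\to y$); hence $\psi$ is bounded above and below on $\overline{U_t}$.

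The step I expect to be the main obstacle is the passage, in the first line of \eqref{Kest}, from comparability along the normal ray at $y$ to comparability throughout $\Gw_t$. Near the common singular point $y\in\bdw$ the Hopf lemma is unavailable — positive $\L_\mu$-harmonic functions vanish there at the anomalous rate $\gd^{\ga_+}$ — so the continuous‑extension trick used at $\Sigma_{\bar\rho}$ fails, and one must invoke the full quantitative \BHP together with the two‑sided Green bound \eqref{K.G}, while keeping careful track of the $t$‑dependent constants; everything else is routine once (I) and (II) are established.
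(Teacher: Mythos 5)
Your argument is correct in outline and rests on the same machinery as the paper (the scale-invariant \BHP \eqref{BHP}, Ancona's bound \eqref{K.G}, interior Harnack, and a compactness step away from $\bdw$), but it is packaged differently in two places. For the estimates at the inner, smooth part of the boundary (second lines of \eqref{Gest} and \eqref{Kest}), you replace the paper's appeal to Ancona's comparison for bounded-potential perturbations by an elementary Hopf-lemma/compactness argument on $\overline{U_t}$, together with standard two-sided Poisson-kernel bounds at $\Gs_{\bar\rho}$; this works because $\mu\gd^{-2}$ is bounded and smooth there, and is arguably more elementary (just make sure the Poisson-kernel comparability near $y\in\Gs_{\bar\rho}$ is quoted in a form uniform in $y$, so that $c_4(t)$ does not depend on $y$). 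For the first line of \eqref{Kest}, the paper does not compare the two kernels directly: it first derives the sharp two-sided asymptotics $K_\mu^U(x,y)\sim\gd(x)^{\ga_+}|x-y|^{2\ga_--N}$ on $\Gw_t\times\bdw$ (via \eqref{gdga}, \eqref{K.G}, a Harnack extension from the normal ray to a non-tangential cone $\CC_y(b)$, and the \BHP applied in rescaled boxes $T_P(r_P,\rho_P)$ with $r_P\sim|P-y|$ whose corkscrew point $A_P$ lies in that cone), and only then matches this against the identical estimate for $K^U_{\mu,U}$ from \cite{MMN}. Your direct ratio comparison --- \eqref{K.G} for both operators plus the first line of \eqref{Gest} on the normal ray, then spreading by the \BHP --- avoids computing the asymptotics and is legitimate, since both kernels are harmonic for the common operator on $\Gw_{\bar\rho/2}$ and $\L_{\mu,U}$ also satisfies Ancona's hypotheses on $U$. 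The step you flag as the obstacle (tangential points near $y$) is resolved exactly by the paper's device: apply \eqref{BHP} to the pair $K_\mu^U(\cdot,y)$, $K_{\mu,U}^U(\cdot,y)$ in the boxes $T_P(r_P,\rho_P)$ at scale $r_P\sim|P-y|$ (both vanish on $\bdw\cap T_P$ once $r_P\le|P-y|/2$), and control the ratio at $A_P$ by a bounded Harnack chain inside the cone back to the normal ray; keeping the aspect ratio fixed makes all constants depend only on $t$ and the Lipschitz character. So your route proves Theorem \ref{KOR} itself, at the price of not producing the explicit estimate \eqref{Kest3}, which the paper extracts from this proof and reuses later (in Theorem \ref{key-res} and Proposition \ref{vg-measure}); if you follow your route you would still need \eqref{Kest3}, which then follows from your comparison together with the known behaviour of $K^U_{\mu,U}$.
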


\proof
Note that
$\L_\mu=\L_{\mu,U}$ in $\Gw_{\bar\rho/2}$.
Hence both $G_\mu^U(\cdot,x_t)$ and
$G^U_{\mu,U}(\cdot,x_t)$ are $\L_\mu$-harmonic in $\Gw_t$ and vanish on $\bdw$. Therefore, by the \BHP  they are equivalent in a strip $S$  along $\bdw$. In addition they are continuous and bounded away from zero in $\Gw_t\sms S $. This implies the first inequality in \eqref{Gest}. For the second inequality: $G_\mu^U(\cdot,x'_t)$ is $\L_\mu$-harmonic in $U_t$, $G_0^U(\cdot,x'_t)$ is
$\Delta$ harmonic in $U_t$ and $\L_\mu-\Delta= \mu/\gd(x)^2$  is bounded in $U_t$. Therefore, since they both vanish on $\Gs_{\bar\rho}$, we can still apply the \BHP (c.f. Ancona \cite{An-Green97}) to deduce that they are equivalent in the strip $U_t$. This implies the second inequality in \eqref{Gest}.

Recall that, $G^U_{\mu,U}(x,x_t)\sim \gd_{U}(x)^{\ga_+}$ in  $\Gw_t$ for $t\in (0,\rho)$.  (Of course the constants involved in this relation depend on $t$.) Since $\gd_\Gw\sim \gd_U$ in $\Gw_t$, this fact and \eqref{Gest} imply,
 \begin{equation}\label{gdga}
  G_\mu^U(x,x_t)\sim \gd_{\Gw}(x)^{\ga_+} \forevery x\in \Gw_t.
 \end{equation}

In what follows we use the notation introduced for the statement of the \BHP.
Let $y\in \bdw$ and let $\gx=\gx_y$ be a local set of coordinates at $y$ relative to $U$. Thus $$\gw_y=T_y(r,\rho)\cap U= \{\gx: F_y(\gx')<\gx_1<\rho,\, |\gx'|<r\}.$$
We assume that $\gg=\rho/r>12\Gl$.

Since $K_\mu^U(\cdot,y)$ and $G_\mu^U(\cdot,x_t)$ satisfy the (classical) Harnack inequality
\eqref{K.G} remains valid in $\CC_y(b)\cap T_y(r,\rho)$. Therefore, assuming that $\rho<t<\bar\rho$,
\begin{equation}\label{Kest1}
K_\mu^U(\gx,y)G_\mu^U(\gx,x_t)\sim K_\mu^U((\gx_1,0),y)G_\mu^U((\gx_1,0),x_t)\sim |\gx|^{2-N}
\end{equation}
for every $\gx\in \CC_y(b)\cap T_y(r,\rho)$. By \eqref{gdga} and \eqref{Kest1},
\begin{equation}\label{Kest1'}
K_\mu^U(\gx,y)\sim |\gx|^{2-N}\gd(\gx)^{-\ga_+} \forevery \gx\in \CC_y(b)\cap T_y(r,\rho).
\end{equation}

Let $\eta$ be a point in  $\BBR^{N-1}$ \sth $0<|\eta|<r/2$ and denote by $P$ the point  $(F_y(\eta), \eta)$ in the local coordinates $\gx_y$. Then $P\in \bdw$ and  $\gx_P:=\gx_y-P$ is a standard set of local coordinates at $P$. Choose $r_P,\rho_P$ \sth $r_P=|\eta|/2$ and $\rho_P/r_P=\gg$. Then,  $$|x-y|=|\gx_y|\sim |\gx'_y|\sim r_P \forevery x\in \Gw\cap T_P(r_P,\rho_P).$$

Let $A_P=(\rho_P/2,0)$ in $\gx_P$ coordinates, i.e., $A_P=(F_y(\eta)+\gg r_P/2, \eta)$ in $\gx_y$ coordinates.
Pick $b$  such that $\Gl< b< 2\Gl$. Then
$$F_y(\eta)+\rho_P/2\geq -\Gl |\eta| - \gg r_P/2=|\eta|(-\Gl+ \gg/4)>2\Gl |eta|.$$
Consequently,
$F_y(\eta)< b|\eta|< F_y(\eta)+\rho_P/2$, which implies
 $$A_P\in \CC_y(b):=\{\gx_y=(\gx_1,\gx'):\, \gx_1>b |\gx'|\}.$$
Observe that
$$\gd_\Gw(A_P)\sim \gr_P/2, \quad|\gx_y(A_P)|=|A_P-y|\sim (\rho_P^2+r_P^2)^{1/2}\sim r_p. $$
Therefore, by \eqref{Kest1'},
$$K_\mu^U(A_P,y)\sim r_P^{2-N-\ga_+}.$$
In fact, $$|x-y|=|\gx_y|\sim  r_P \forevery x\in \Gw\cap T_P(r_P,\rho_P).$$
Therefore applying \eqref{BHP} in $\Gw\cap T_P(r_P,\rho_P)$ with $u(x)=K_\mu^U(x,y)$  we obtain,
\begin{equation}\label{Kest2}\BAL
K_\mu^U(x,y)&\sim K_\mu^U(A_P,y)\frac{G_\mu^U(x,x_t)}{G_\mu^U(A_P,x_t)}\sim r_P^{2-N-\ga_+}(\gd(x)/r_P)^{\ga_+}\\
&\sim |x-y|^{2-N-2\ga_+}\gd(x)^{\ga_+}=\gd(x)^{\ga_+}|x-y|^{2\ga_- -N}
\EAL\end{equation}
for every $x \in \Gw\cap T_P(r_P/2,\rho_P/2)$.
Combining \eqref{Kest1'} and \eqref{Kest2}, we obtain,
 \begin{equation}\label{Kest2'}
K_\mu^U(x,y)\sim |x-y|^{2-N-\ga_+}(\gd(x)/|x-y|)^{\ga_+}=\gd(x)^{\ga_+}|x-y|^{2\ga_- -N}
\end{equation}
for every $x\in T_y(r/2,\rho/2)$. As \eqref{Kest2'} holds uniformly with respect to $y\in \bdw$ we conclude that there exists $r'>0$ \sth this relation holds
for every $(x,y)\in \Gw_{r'}\times \bdw$. Consequently, for every $t\in (0,\bar\rho)$,
  \begin{equation}\label{Kest3}
K_\mu^U(x,y)\sim |x-y|^{2-N-\ga_+}(\gd(x)/|x-y|)^{\ga_+}=\gd(x)^{\ga_+}|x-y|^{2\ga_- -N}
\end{equation}
for every $(x,y)\in \Gw_{t}\times \bdw$
with similarity constants depending on $t$. Since  $K^U_{\mu,U}$ behaves precisely in the same way (see \cite{MMN}*{Sec. 2.2}) we obtain the first inequality in \eqref{Kest}. The second inequality is proved in a similar way.
\qed
\medskip

We state below two key results concerning the operator $\L_\mu$ in $U=\Gw_{\bar\rho}$. These  have been recently  proved in \cite{MMN}, with respect to the operator $\L_\mu$ in $\Gw$ under the assumption that $0<\mu<C_H(\Gw)$. (In fact, the condition $\mu>0$ is redundant and does not affect the proofs.) Since $C_H(\Gw_{\bar\rho})=1/4$, the results apply to the operator $\L_{\mu,\Gw_{\bar\rho}}$ for every $\mu<1/4$. In view of the relation between the Martin kernels and Green functions of $\L_{\mu,\Gw_{\bar\rho}}$ and $\L_\mu$ in $\Gw_{\bar\rho}$, these results also apply to the operator $\L_\mu$ in $\Gw_{\bar\rho}$.

\begin{theorem}\label{key-res}
(i) If $\nu_0\in\M^+(\partial\Omega)\setminus\{0\}$ then there exist positive numbers $c$ and $\rho_0<\bar\rho$ \sth,
\begin{equation}\label{e-trace-0-}
c^{-1}\norm{\nu_0}\leq\frac{1}{\eps^{\alpha_-}}\int_{\Sigma_\eps}\K_\mu^{\Gw_\rho}[\nu_0]dS \leq c\norm{\nu_0},\quad \ge\in(0,\rho_0).
\end{equation}

\noindent(ii) Let $\rho\in (0,\bar \rho)$ and let $\tau$ be a Radon measure in $\Gw_{\bar\rho}$. Denote
$$\G_\mu^{\Omega_\rho}[\tau](x):=\int_{\Omega_\rho}G_\mu^{\Omega_\rho}(x,y)d\tau(y),\quad x\in\Omega_\rho.$$
If $\tau\in\M^+_{\delta^{\alpha_+}}(\Omega_\rho)$ then for every
$0<\eps<\rho'<\rho$,
\begin{equation}\label{Gmt'}
   \rec{\eps^{\ga_-}} \int_{\Gs_\eps}\G_\mu^{\Gw_\gr}[\tau]dS_x\leq c\int_{\Gw_\rho}\gd^{\ga_+}d\tau,
\end{equation}
where $c$ is a constant depending on $\mu,\rho'$, but not on $\eps$. Moreover,
\begin{equation}\label{e-trace-potential}
\lim_{\eps\to 0}\frac{1}{\eps^{\alpha_-}}\int_{\Sigma_\eps}\G_\mu^{\Omega_\rho}[\tau]\,dS=0.
\end{equation}
\end{theorem}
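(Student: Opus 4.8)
The plan is to transfer both parts from the corresponding statements in \cite{MMN}, using the identity $\L_\mu=\L_{\mu,\Gw_{\bar\rho}}$ on $\Gw_{\bar\rho/2}$ together with the kernel comparison estimates \eqref{Gest}--\eqref{Kest} of Theorem \ref{KOR} (with $U=\Gw_{\bar\rho}$). Since $C_H(\Gw_{\bar\rho})=1/4$ by Lemma \ref{t-Hardy-loc}, the operator $\L_{\mu,\Gw_{\bar\rho}}$ on $\Gw_\rho$ (for $\rho\le\bar\rho$) has a positive supersolution for every $\mu<1/4$, so the Ancona machinery of Subsection 2.1 applies and the quoted \cite{MMN} results hold verbatim for $\L_{\mu,\Gw_{\bar\rho}}$; the point is to pass from $K_{\mu,\Gw_{\bar\rho}}^{\Gw_\rho}$ and $G_{\mu,\Gw_{\bar\rho}}^{\Gw_\rho}$ back to $K_\mu^{\Gw_\rho}$ and $G_\mu^{\Gw_\rho}$.

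For part (i): first I would note that for $\rho\le\bar\rho$ one has $\gd_\Gw\sim\gd_{\Gw_{\bar\rho}}$ on a strip near $\bdw$, and by the argument in the proof of Theorem \ref{KOR} (applied with the reference domain $\Gw_\rho$ in place of $U=\Gw_{\bar\rho}$) the Martin kernels satisfy $K_\mu^{\Gw_\rho}(x,y)\sim K_{\mu,\Gw_{\bar\rho}}^{\Gw_\rho}(x,y)$ for $(x,y)$ in a neighborhood of $\bdw\times\bdw$, with constants depending on $\rho$. Integrating against $\nu_0$ gives $\K_\mu^{\Gw_\rho}[\nu_0]\sim\K_{\mu,\Gw_{\bar\rho}}^{\Gw_\rho}[\nu_0]$ on $\Gw_{\rho_0}$ for some $\rho_0<\bar\rho$. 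Then I apply the \cite{MMN} version of \eqref{e-trace-0-} (i.e.\ \cite{MMN}*{Lemma 2.X}) to $\K_{\mu,\Gw_{\bar\rho}}^{\Gw_\rho}[\nu_0]$ to get the two-sided bound by $c^{-1}\norm{\nu_0}$ and $c\norm{\nu_0}$, and absorb the comparison constants. One must check that $\Gs_\eps\subset\Gw_{\rho_0}$ for small $\eps$, which is automatic.

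For part (ii): the Green-function comparison $G_\mu^{\Gw_\rho}(x,y)\sim G_{\mu,\Gw_{\bar\rho}}^{\Gw_\rho}(x,y)$ near $\bdw$ follows from the domain-monotonicity/\BHP argument in Lemma \ref{KOR1} and Theorem \ref{KOR} (the perturbation $\L_\mu-\L_{\mu,\Gw_{\bar\rho}}$ vanishes on $\Gw_{\bar\rho/2}$), so for $\tau\in\M^+_{\gd^{\ga_+}}(\Gw_\rho)$ supported where we need it, $\G_\mu^{\Gw_\rho}[\tau]\sim\G_{\mu,\Gw_{\bar\rho}}^{\Gw_\rho}[\tau]$ in a strip; then \eqref{Gmt'} and \eqref{e-trace-potential} follow from the \cite{MMN} counterparts. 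The subtlety is that $\tau$ need not be supported near $\bdw$: for the part of $\tau$ supported in a compact subset $D_{\rho'}$ of $\Gw_\rho$, $\G_\mu^{\Gw_\rho}[\tau]$ is a bounded $\L_\mu$-harmonic function near $\bdw$ vanishing on $\bdw$, hence $O(\gd^{\ga_+})$ there by the boundary estimate \eqref{gdga}, so its contribution to $\eps^{-\ga_-}\int_{\Gs_\eps}$ is $O(\eps^{\ga_+-\ga_-}\,\norm{\tau})\to0$; combining with the near-boundary part gives both \eqref{Gmt'} and \eqref{e-trace-potential}.

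The main obstacle I anticipate is bookkeeping the $\rho$-dependence and the region of validity of the kernel equivalences: Theorem \ref{KOR} is stated for the specific reference domain $U=\Gw_{\bar\rho}$ and a specific sublevel $\Gw_t$, so one must either re-run its proof with $\Gw_\rho$ as the ambient domain or argue that the estimates are stable under the (finite) change of reference pole and of $\rho$. This is routine but needs care to ensure the comparison constants do not degenerate as $\eps\to0$ — they depend only on $\rho$ (and $\rho'$), not on $\eps$, which is exactly what \eqref{Gmt'} requires.
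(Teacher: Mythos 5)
Your proposal is correct in substance and follows the same basic strategy as the paper -- both parts are obtained by reducing to the corresponding results of \cite{MMN} through the kernel estimates of Lemma \ref{KOR1} and Theorem \ref{KOR} -- but the execution of part (ii) differs in an interesting way. For (i) the paper does exactly what you do: it invokes \eqref{Kest3} and then cites \cite{MMN}*{Corollary 2.11}. For (ii), however, the paper does not cite \cite{MMN}*{Proposition 2.12} directly; it re-runs that proof in the modified setting, using only the \emph{upper} Green bound \eqref{Gr-above} together with Fubini and a splitting at $|x-y|=\gb/2$ to get \eqref{Gmt'}, and then a decomposition $\tau=\tau_1+\tau_2$ at a small level $\gb_1=\gb_1(\ell)$ chosen so that $\int_{\Gw_{\gb_1}}\gd^{\ga_+}d\tau\le\ell$ to get the limit \eqref{e-trace-potential}. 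Your alternative -- split $\tau$ at a fixed $\rho'$, outsource the near-boundary part to the \cite{MMN} counterpart via kernel comparison, and estimate the interior-supported part by the $O(\gd^{\ga_+})$ boundary behaviour -- works, and your treatment of the far part is sound (note $\norm{\tau_{far}}\le (\rho')^{-\ga_+}\int\gd^{\ga_+}d\tau$, so the bound is of the right form for \eqref{Gmt'}). Two caveats: first, the paper only establishes a one-sided bound $G_\mu^{\Gw_\rho}\le c\,G_{\mu,\Gw_{\bar\rho}}^{\Gw_{\bar\rho}}$ in two variables (Lemma \ref{KOR1}); the two-sided equivalence $G_\mu^{\Gw_\rho}\sim G_{\mu,\Gw_{\bar\rho}}^{\Gw_\rho}$ you assert is not proved there -- fortunately only the upper bound is needed, since both \eqref{Gmt'} and \eqref{e-trace-potential} are upper estimates. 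Second, the \cite{MMN} statement lives on the operator/domain pair in which the potential distance and the boundary coincide, whereas here $\Gw_\rho$ has the extra boundary piece $\Gs_\rho$ and the potential involves $\gd_\Gw$ only; this is exactly the ``several modifications'' the paper mentions and is why it writes the argument out in full. So your citation shortcut is legitimate modulo the bookkeeping you yourself flag, while the paper's choice buys a self-contained proof whose constants manifestly depend only on $\mu,\rho'$.
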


\begin{remark}
If $\G_\mu^{\Omega_\rho}[\tau](x')<\infty$ for some point $x'\in \Gw_\rho$ then $\tau\in\M^+_{\delta^{\alpha_+}}(\Omega_\rho)$ and $\G_\mu^{\Omega_\rho}[\tau](x)<\infty$ for every $x\in \Gw_\rho$. This follows from the fact that there exists  $c>0$ \sth for every fixed $x\in \Gw_\rho$,
$$\rec{c}\gd(y)^{\ga_+}\leq G_\mu^{\Gw_\rho}(x,y)\leq c\gd(y)^{\ga_+}   \forevery y\in \Gw_{\gd(x)/2}.$$
\end{remark}

\proof
In view of \eqref{Kest3}, inequality \eqref{e-trace-0-} follows from \cite{MMN}*{Corollary 2.11}.

The proof of \eqref{Gmt'} and \eqref{e-trace-potential} is similar to that of \cite{MMN}*{Proposition 2.12}. However several modifications are needed; therefore we provide  the proof of these statements in detail.

We may assume that $\tau>0$. Denote $v:=\BBG_\mu^{\Gw_\rho}[\tau].$ We start with the proof of
\eqref{Gmt'}.

By Fubini's theorem and \eqref{Gest},
\[\BAL
\int_{\Gs_\gb}vdS_x\leq c\Big(&\int_\Gw\int_{\Gs_\gb\cap B_{\frac{\gb}{2}}(y)}|x-y|^{2 -N}
dS_x\,d\tau(y)\\
+\gb^{\ga_+}&\int_\Gw\int_{\Gs_\gb\sms B_{\frac{\gb}{2}}(y)}|x-y|^{2\ga_- -N}
dS_x\,\gd^{\ga_+}(y)d\tau(y)\Big)=I_1(\gb)+I_2(\gb).
\EAL\]
Note that, if $x\in\Gs_\gb$ and $|x-y|\leq \gb/2$ then $\gb/2\leq \gd(y)\leq 3\gb/2$. Therefore
$$\BAL
I_1(\gb)&\leq c_1\gb^{-\ga_+}\int_{\Gs_\gb\cap B_{\frac{\gb}{4}}(y)}|x-y|^{2 -N}dS_x\int_{\Gw_\rho} \gd(y)^{\ga_+}\,d\tau(y)\\
&\leq c'_1\gb^{1-\ga_+}\,\int_{\Gw_\rho} \gd(y)^{\ga_+}\, d\tau(y)
=c'_1\gb^{\ga_-}\,\int_{\Gw_\rho} \gd(y)^{\ga_+}\, d\tau(y)
\EAL$$
and
$$I_2(\gb)\leq c_2\gb^{\ga_+}\int_{\gb/4}^\infty r^{2\ga_- -N}r^{N-2}dr\int_{\Gw_\rho} \gd(y)^{\ga_+}\,d\tau\leq c'_2\gb^{\ga_-}
\int_{\Gw_\rho} \gd(y)^{\ga_+}\,d\tau.$$
This implies \eqref{Gmt'}.

Given $\ell\in (0,\norm{\tau}_{\GTM_{\gd^\ga_+}(\Gw)})$ and $\gb_1\in (0,\gb_0)$ put $\tau_1=\tau\chi_{_{\bar
D_{\gb_1}}}$ and $\tau_2=\tau-\tau_1$. Pick $\gb_1=\gb_1(\ell)$ such that
\begin{equation}\label{tau2}
 \int_{\Gw_{\gb_1}}\gd(y)^{\ga_+}\,d\tau\leq\ell.
\end{equation}
 Thus the choice of $\gb_1$ depends on the rate at which $\int_{\Gw_\gb}\gd^\ga_+\,d\tau $ tends to zero as
 $\gb\to 0$.

Put $v_i=\BBG_\mu^\Gw[\tau_i]$.
Then, for $0<\gb<\gb_1/2$,
$$\int_{\Gs_\gb}v_1\,dS_x\leq c_3\gb^{\ga_+}\gb_1^{2\ga_- -N}\int_{\Gw_\rho} \gd^{\ga_+}(y)d\tau_1(y).$$
Thus,
\begin{equation}\label{Gmt1}
   \lim_{\gb\to 0}\rec{\gb^{\ga_-}} \int_{\Gs_\gb}v_1\,dS_x=0.
\end{equation}
On the other hand, by \eqref{Gmt'} (replacing $\Gw_\rho$ by $\Gw_{\beta_1}$) and \eqref{tau2},
\begin{equation}\label{Gmt2}
   \rec{\gb^{\ga_-}} \int_{\Gs_\gb}v_2\,dS_x\leq c\ell \forevery \gb<\gb_1.
\end{equation}
This proves \eqref{e-trace-potential}.
\qed
\medskip

\begin{corollary}\label{trace=0}   Let $\rho\in (0,\bar\rho]$ and assume that
 $h$ is a nonnegative $\L_\mu$-harmonic function in $\Omega_\rho$ \sth
\begin{equation}\label{e-trace=0}
\lim_{\eps\to 0}\frac{1}{\eps^{\alpha_-}}\int_{\Sigma_\eps}h\, dS=0.
\end{equation}
Then: (i) $h=\K_\mu^{\Omega_\rho}[\nu_\rho]$ for some measure $\nu_\rho\in\M^+(\Sigma_\rho)$ and
(ii) For $t\in (0,\bar\rho)$,
\begin{equation}\label{e-trace-0++}
h\sim \gd_\Gw^{\ga_+} \quad \text{in $\Gw_t$},
\end{equation}
with the similarity constant depending on $t$.
\end{corollary}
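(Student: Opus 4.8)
The plan is to obtain (i) from the Martin representation theorem in the subdomain $\Gw_\rho$, and then to deduce (ii) from (i) together with the uniform two–sided estimate of $G_\mu^{\Gw_\rho}$ near $\bdw$ recorded in the Remark following Theorem~\ref{key-res}.

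\emph{Part (i).} Since $\rho\le\bar\rho$, Lemma~\ref{t-Hardy-loc} gives $C_H(\Gw_\rho)=1/4>\mu$, so by Lemma~\ref{t-GST} $\L_\mu$ has a positive superharmonic function in $\Gw_\rho$ and Theorem~\ref{t-repr} applies there: $h=\K_\mu^{\Gw_\rho}[\nu_\rho]$ for a unique $\nu_\rho\in\M^+(\prt\Gw_\rho)$. Write $\prt\Gw_\rho=\bdw\cup\Gs_\rho$ (a disjoint union) and split $\nu_\rho=\gs+\gl$ with $\gs:=\nu_\rho\lfloor\bdw$ and $\gl:=\nu_\rho\lfloor\Gs_\rho$, so that $0\le\K_\mu^{\Gw_\rho}[\gs]\le h$ in $\Gw_\rho$. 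Then, since $\eps^{-\ga_-}>0$, the hypothesis \eqref{e-trace=0} forces $\eps^{-\ga_-}\int_{\Gs_\eps}\K_\mu^{\Gw_\rho}[\gs]\,dS\to0$; if $\gs\ne0$ this contradicts the lower bound $\eps^{-\ga_-}\int_{\Gs_\eps}\K_\mu^{\Gw_\rho}[\gs]\,dS\ge c^{-1}\norm{\gs}$ of Theorem~\ref{key-res}(i). Hence $\gs=0$, i.e.\ $\nu_\rho=\gl$ is carried by $\Gs_\rho$, which is (i).

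\emph{Part (ii).} We may assume $h\not\equiv0$ (otherwise the statement is empty), so $h>0$ in $\Gw_\rho$ by Harnack and, by (i), $\nu_\rho\in\M^+(\Gs_\rho)\sms\{0\}$. Fix a reference point $x_1\in\Gw_\rho$ with $\gd_\Gw(x_1)=\rho/8$. The crucial structural fact is that $\Gs_\rho$ lies at the fixed positive distance $\rho$ from $\bdw$: for any $y\in\Gs_\rho$, every sequence $z\to y$ inside $\Gw_\rho$ has $\gd_\Gw(z)\to\rho$, so eventually $\gd_\Gw(z)\in(\rho/2,\rho)$ and then $\Gw_{\rho/4}\sbs\Gw_{\gd_\Gw(z)/2}$. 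For such $z$ the estimate of the Remark after Theorem~\ref{key-res}, namely $G_\mu^{\Gw_\rho}(x,w)\sim\gd_\Gw(x)^{\ga_+}$ for $x\in\Gw_{\gd_\Gw(w)/2}$ \emph{with a constant independent of the base point $w$}, applied with base point $z$ to each $x\in\Gw_{\rho/4}$ and to $x_1$ gives $G_\mu^{\Gw_\rho}(x,z)/G_\mu^{\Gw_\rho}(x_1,z)\sim\gd_\Gw(x)^{\ga_+}$ with constants independent of $z$ and of $y$. Since this ratio converges, as $z\to y$, to the normalisation–independent ratio $K_\mu^{\Gw_\rho}(x,y)/K_\mu^{\Gw_\rho}(x_1,y)$, we obtain $K_\mu^{\Gw_\rho}(x,y)\sim\gd_\Gw(x)^{\ga_+}K_\mu^{\Gw_\rho}(x_1,y)$, uniformly in $y\in\Gs_\rho$ and $x\in\Gw_{\rho/4}$. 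Integrating this pointwise bound (in $y$) against $\nu_\rho$ and using $h=\K_\mu^{\Gw_\rho}[\nu_\rho]$ yields $h(x)\sim\gd_\Gw(x)^{\ga_+}h(x_1)$ in $\Gw_{\rho/4}$; as $0<h(x_1)<\infty$ this is \eqref{e-trace-0++} on the strip $\Gw_{\rho/4}$. Extending to $\Gw_t$ for arbitrary $t<\rho$ is then immediate: on the compact set $\{\rho/8\le\gd_\Gw\le t\}\sbs\Gw_\rho$ (and for $t\le\rho/8$ one has $\Gw_t\sbs\Gw_{\rho/4}$ already) both $h$ and $\gd_\Gw^{\ga_+}$ are continuous and bounded away from $0$, hence comparable there as well.

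\emph{Main point of care.} Part (i) is routine once Theorems~\ref{t-repr} and~\ref{key-res}(i) are in hand. In (ii) the delicate issue is that a naive maximum/comparison principle does not by itself force on $h$ the fast decay $\gd_\Gw^{\ga_+}$: for $\L_\mu$ the boundary condition has to be read at the scale $\gd_\Gw^{\ga_-}$, and a priori $h$ could carry a $\gd_\Gw^{\ga_-}$–component. What removes this is precisely the conclusion of (i) — because $\nu_\rho$ is carried by $\Gs_\rho$, the poles $z\to y$ of the Green functions approximating the Martin kernels can be kept in the fixed region $\{\gd_\Gw>\rho/2\}$, where the two–sided estimate of $G_\mu^{\Gw_\rho}$ near $\bdw$ holds \emph{uniformly in the pole}. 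Tracking this uniformity (and the uniformity over $y\in\Gs_\rho$) is the one thing that must be checked with some care.
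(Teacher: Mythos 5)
Your part (i) is correct and is essentially the paper's own argument: Martin representation in $\Gw_\rho$, decomposition of the representing measure over $\prt\Gw_\rho=\bdw\cup\Gs_\rho$, and the lower bound of Theorem \ref{key-res}(i) combined with \eqref{e-trace=0} to force the $\bdw$-part to vanish.

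Part (ii), however, has a genuine gap at its key step. You assert that $G_\mu^{\Gw_\rho}(x,z)\sim\gd_\Gw(x)^{\ga_+}$ for $x\in\Gw_{\rho/4}$ \emph{with a constant independent of the pole} $z$, for all $z$ with $\gd_\Gw(z)\in(\rho/2,\rho)$, citing the Remark after Theorem \ref{key-res}. That Remark is a statement for each fixed base point (its constant depends on that point), and no uniform version can hold on your range of poles: as $z$ approaches a point $y\in\Gs_\rho$, i.e.\ approaches the inner boundary component of $\Gw_\rho$, where the potential $\mu\gd_\Gw^{-2}$ is bounded, the Green function $G_\mu^{\Gw_\rho}(\cdot,z)$ tends to $0$ locally uniformly in $\Gw_{\rho/4}$; hence the lower bound $c^{-1}\gd_\Gw(x)^{\ga_+}\le G_\mu^{\Gw_\rho}(x,z)$ with $c$ independent of $z$ is false. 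Since your argument divides two such pointwise estimates, each valid only with a $z$-dependent constant $c(z)\to\infty$, the uniform bound on the ratio $G_\mu^{\Gw_\rho}(x,z)/G_\mu^{\Gw_\rho}(x_1,z)$ does not follow from what you cite. The cancellation of the $z$-dependence in the ratio is exactly the content of the boundary Harnack principle, and this is the tool the paper relies on (cf.\ the proof of Theorem \ref{KOR}): for $y\in\Gs_\rho$, the function $K_\mu^{\Gw_\rho}(\cdot,y)$ (or $G_\mu^{\Gw_\rho}(\cdot,z)$ with $\gd_\Gw(z)>\rho/2$) and the fixed function $G_\mu^{\Gw_\rho}(\cdot,x_t)$ are positive $\L_\mu$-harmonic in $\Gw_t$, $t<\rho/2$, and vanish on $\bdw$, so \eqref{BHP} together with a Harnack chain along the compact surface $\Gs_t$ gives $K_\mu^{\Gw_\rho}(x,y)\sim C(y)\,G_\mu^{\Gw_\rho}(x,x_t)\sim C(y)\,\gd_\Gw(x)^{\ga_+}$ in $\Gw_t$ by \eqref{gdga}, with comparison constants uniform in $y\in\Gs_\rho$; taking $C(y)=K_\mu^{\Gw_\rho}(A,y)/G_\mu^{\Gw_\rho}(A,x_t)$ for a fixed interior point $A$ and integrating against $\nu_\rho$ (note $\int C\,d\nu_\rho=h(A)/G_\mu^{\Gw_\rho}(A,x_t)\in(0,\infty)$) yields \eqref{e-trace-0++}. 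So the conclusion you aim at is true, but the uniformity you yourself flagged as the delicate point cannot be extracted from the Remark's estimate; it must come from the BHP comparison of ratios, not from two separate pointwise Green-function bounds.
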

\proof
(i) By the Representation Theorem, $h=\K_\mu^{\Gw_\rho}[\nu]$ for some $\nu\in \M(\prt \Gw_\rho)$.  By \eqref{e-trace-0-} and \eqref{e-trace=0},
$\nu_0:=\nu\chr{\bdw}=0$. Thus $\nu=\nu_\rho:=\nu\chr{\Sigma_\rho}$.

\noindent(ii) This is a consequence of (i) and \eqref{Kest3}.
\qed

\begin{corollary}\label{tr(G)}
If  $\tau\in  \M^+_{\gd^{\ga_+}}(\Gw_\rho)\sms\{0\}$   then there exists a positive constant $c=c(\tau)$ \sth
\begin{equation}\label{e-trace-potential-lower}
\G_\mu^{\Omega_\rho}[\tau](x) \geq c \delta(x)^{\alpha_+} \quad \forall x\in \Gw_{\rho},
\end{equation}
and
\begin{equation}\label{e-trace-potential-upper}
\liminf_{x\to \partial \Omega}\frac{\G_\mu^{\Omega_\rho}[\tau](x)}{\gd(x)^{\alpha_-}}< \infty.
\end{equation}

\end{corollary}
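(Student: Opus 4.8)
The plan is to prove \eqref{e-trace-potential-lower} first and then deduce \eqref{e-trace-potential-upper} from it by combining with part (ii) of Theorem \ref{key-res}. For the lower bound, the idea is to exploit the pointwise lower estimate on $G_\mu^{\Gw_\rho}$ recorded in the Remark immediately after Theorem \ref{key-res}: there exists $c>0$ such that for each fixed $x\in\Gw_\rho$ one has $G_\mu^{\Gw_\rho}(x,y)\geq \tfrac1c\,\gd(y)^{\ga_+}$ for all $y\in\Gw_{\gd(x)/2}$. Since $\tau\in\M^+_{\gd^{\ga_+}}(\Gw_\rho)\sms\{0\}$, there is a compact set $E\subset\Gw_\rho$ with $\tau(E)>0$; fix a point $x_0$ with $\gd(x_0)$ small enough that $E\cap\Gw_{\gd(x_0)/2}$ still carries positive $\tau$-mass — actually it is cleaner to first observe that $\int_{\Gw_\rho}\gd^{\ga_+}\,d\tau>0$, hence $\int_{\Gw_{\gd_0}}\gd^{\ga_+}\,d\tau>0$ for $\gd_0$ sufficiently small. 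Then for any $x\in\Gw_\rho$ with $\gd(x)$ large compared to $\gd_0$ (i.e. $\gd(x)>2\gd_0$), the Remark gives $\G_\mu^{\Gw_\rho}[\tau](x)\geq \tfrac1c\int_{\Gw_{\gd_0}}\gd(y)^{\ga_+}\,d\tau(y)=:c_1>0$, which certainly dominates $c_1'\gd(x)^{\ga_+}$ on the compact set $\{x:\,2\gd_0\le\gd(x)\}$ of $\Gw_\rho$.

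The remaining task is to propagate this into the strip $\Gw_{2\gd_0}$, where $\gd(x)^{\ga_+}\to0$. Here I would use the \BHP together with the Green-kernel/$\gd^{\ga_+}$ asymptotics already established in the proof of Theorem \ref{KOR} (relation \eqref{gdga}): $G_\mu^U(\cdot,x_t)\sim\gd_\Gw^{\ga_+}$ in $\Gw_t$, and more to the point, for a fixed interior reference point $x_t\in U_t$ one has $G_\mu^{\Gw_\rho}(\cdot,x_t)\sim\gd_\Gw^{\ga_+}$ near $\bdw$. The function $w:=\G_\mu^{\Gw_\rho}[\tau]$ is $\L_\mu$-superharmonic and $\geq 0$ in $\Gw_\rho$; pick any $y_0\in\Gw_\rho$ with $\tau(\{y_0 \text{-nbhd}\})>0$ — more precisely, take $y_0$ in the support of $\tau$ and a small ball $B\subset\Gw_\rho$ around it; then $w(x)\geq\int_B G_\mu^{\Gw_\rho}(x,y)\,d\tau(y)$, and $x\mapsto\int_B G_\mu^{\Gw_\rho}(x,y)\,d\tau(y)$ is a positive $\L_\mu$-harmonic function of $x$ in $\Gw_\rho\sms\bar B$ vanishing on $\bdw$; by the \BHP it is $\sim\gd_\Gw^{\ga_+}$ there, with constant depending on $\tau$. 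This yields \eqref{e-trace-potential-lower} on all of $\Gw_\rho$ after absorbing the (compact-set) estimate of the previous paragraph into the constant.

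Finally, \eqref{e-trace-potential-upper} is immediate: by \eqref{e-trace-potential} in Theorem \ref{key-res}, $\eps^{-\ga_-}\int_{\Gs_\eps}\G_\mu^{\Gw_\rho}[\tau]\,dS\to0$, so in particular $\liminf_{\eps\to0}\eps^{-\ga_-}\int_{\Gs_\eps}\G_\mu^{\Gw_\rho}[\tau]\,dS=0<\infty$; combining with the mean-value inequality along $\Gs_\eps$ (or simply noting that the averaged integral controls $\liminf_{x\to\bdw}\G_\mu^{\Gw_\rho}[\tau](x)/\gd(x)^{\ga_-}$ up to the surface-measure normalization $|\Gs_\eps|\sim$ const) gives the stated finiteness of the pointwise $\liminf$. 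The one point that needs a little care, and which I regard as the main obstacle, is justifying that the \BHP applies to $x\mapsto\int_B G_\mu^{\Gw_\rho}(x,y)\,d\tau(y)$ near the whole boundary $\bdw$ uniformly — this is handled exactly as in the proof of Theorem \ref{KOR}, by covering $\bdw$ with finitely many boundary coordinate charts $T_y(r/2,\rho/2)$ and using that the constant $C$ in \eqref{BHP} can be taken independent of $y\in\bdw$; away from $\bdw$ the function is continuous and bounded below by compactness.
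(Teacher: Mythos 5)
Your second paragraph is, in substance, the paper's own argument: you minorize $\G_\mu^{\Gw_\rho}[\tau]$ by the potential of a piece of $\tau$ supported away from $\bdw$ (a ball $B$ around a point of $\supp\tau$; the paper uses $\tau'=\tau\chr{\Gw_\rho\sms\Gw_t}$ with $\tau(\Gw_\rho\sms\Gw_t)>0$), observe that this potential is a positive $\L_\mu$-harmonic function near $\bdw$, and conclude that it is $\sim\gd^{\ga_+}$ there. You do the last step by a chart-by-chart \BHP argument, but this is exactly the content of \eqref{e-trace-0++} in Corollary \ref{trace=0} (applied in $\Gw_t$ for $t$ below the distance of $B$ to $\bdw$, the zero normalized trace coming from \eqref{e-trace-potential}), so you may simply cite it. Your derivation of \eqref{e-trace-potential-upper} — the surface-integral bound \eqref{Gmt'} (or \eqref{e-trace-potential}) plus choosing points on $\Gs_\eps$ where the function does not exceed its average — is also the intended argument and is correct.

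The genuine error is in your first paragraph: the deduction ``$\int_{\Gw_\rho}\gd^{\ga_+}\,d\tau>0$, hence $\int_{\Gw_{\gd_0}}\gd^{\ga_+}\,d\tau>0$ for $\gd_0$ sufficiently small'' is false. The strips $\Gw_{\gd_0}=\{\gd<\gd_0\}$ shrink to $\bdw$ as $\gd_0\to0$, so these integrals tend to $0$; if, say, $\tau$ is a Dirac mass at a point of $\Gs_{\rho/2}$, they vanish identically for all small $\gd_0$. Thus your lower bound on the region $\{\gd(x)>2\gd_0\}$ has no proof (and, incidentally, that region is not a compact subset of $\Gw_\rho$, and the constant in the Remark after Theorem \ref{key-res} is not uniform as $x$ varies up to the boundary). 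In fact no estimate of the form \eqref{e-trace-potential-lower} can hold uniformly up to $\Gs_\rho$: for $\tau$ a Dirac mass at an interior point, $\G_\mu^{\Gw_\rho}[\tau]$ vanishes on $\Gs_\rho$ while $\gd^{\ga_+}$ is bounded below there. The estimate is to be understood (and is proved in the paper) in a strip $\Gw_{t'}$ near $\bdw$ — which your second paragraph does deliver — together, if one wishes, with a bound on compact subsets of $\Gw_\rho$, which follows from positivity and lower semicontinuity of the potential rather than from the mass-near-the-boundary claim; so the first paragraph should be removed or repaired accordingly.
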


\begin{proof} Let $t\in (0,\rho)$ be a number such that $\tau(\Gw_\rho\sms \Gw_t)>0$. Let $\tau'\in \M_+(\Gw_\rho)$ be defined by: $\tau'=\tau$ in $\Gw_\rho\sms \Gw_t$ and $\tau'=0$ in $\Gw_t$. Then  $$\G_\mu^{\Gw_\rho}[\tau]\geq\\G_\mu^{\Gw_\rho}[\tau']:=h.$$
Since $h$ is $\L_\mu$-harmonic in $\Gw_t$, \eqref{e-trace-potential-lower} is a consequence of \eqref{e-trace-0++}

Inequality \eqref{e-trace-potential-upper} follows from \eqref{Gmt'}.
\end{proof}

The next result was proved in \cite{MMN} for $\L_\mu$ in a domain $\Gw$ \sth $\mu<C_H(\Gw)$.

\begin{theorem}\label{tr-sub}
Let $w$ be a nonnegative $\L_\mu$-subharmonic function in $\Omega_\rho$.
If $w$ is dominated by an $\L_\mu$-superharmonic function in $\Omega_\rho$ then
$\L_\mu w=\lambda\in\M^+_{\delta^{\alpha_+}}(\Omega_\rho)$ and there exists
$\nu\in\M^+(\partial\Omega_{\rho})$ such that
\begin{equation}\label{KmGm}
w=\K_\mu^{\Omega_\rho}[\nu]-\G_\mu^{\Omega_\rho}[\lambda].
\end{equation}
\end{theorem}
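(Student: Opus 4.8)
The plan is to follow the classical Riesz-decomposition strategy, adapted to the operator $\L_\mu$ on the strip $\Gw_\rho$ where $C_H(\Gw_\rho)=1/4$, so that the Green and Martin kernels of $\L_\mu$ exist for every $\mu<1/4$. First I would observe that, since $w$ is $\L_\mu$-subharmonic, $\lambda:=\L_\mu w$ is a nonnegative Radon measure on $\Gw_\rho$. The first substantive step is to show $\lambda\in\M^+_{\gd^{\ga_+}}(\Gw_\rho)$. For this I fix an exhaustion $\Gw_{\rho_n}'\Subset\Gw_\rho$ (or shrink slightly to $\Gw_{\rho'}$ with $\rho'<\rho$) and solve the Dirichlet problem $\L_\mu z_n=-\lambda\chr{\Gw_{\rho_n}'}$ in $\Gw_{\rho'}$ with $z_n=0$ on $\partial\Gw_{\rho'}$; then $z_n=\G_\mu^{\Gw_{\rho'}}[\lambda\chr{\Gw_{\rho_n}'}]$ and $w+z_n$ is $\L_\mu$-harmonic in the interior of the support. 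The domination hypothesis — $w\le W$ for some $\L_\mu$-superharmonic $W$ in $\Gw_\rho$ — gives a uniform upper bound, and since $w\ge 0$ the sequence $z_n$ is monotone increasing and bounded above (by $W-w$ plus harmonic corrections, made precise via the maximum principle), hence converges. Passing to the limit shows $\G_\mu^{\Gw_{\rho'}}[\lambda]$ is finite at an interior point, and by the Remark following Theorem~\ref{key-res} (the two-sided estimate $G_\mu^{\Gw_\rho}(x,y)\sim\gd(y)^{\ga_+}$ for $y$ near $\partial\Gw$), finiteness at one point forces $\int_{\Gw_\rho}\gd^{\ga_+}d\lambda<\infty$, i.e. $\lambda\in\M^+_{\gd^{\ga_+}}(\Gw_\rho)$. (One must be careful with the cross-component $\Sigma_\rho$ of $\partial\Gw_\rho$: there $\gd_\Gw$ is bounded below, so no issue arises, and the weight $\gd^{\ga_+}$ only matters near $\bdw$.)

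Once $\lambda\in\M^+_{\gd^{\ga_+}}(\Gw_\rho)$, the potential $\G_\mu^{\Gw_\rho}[\lambda]$ is a well-defined $\L_\mu$-superharmonic function on $\Gw_\rho$ with $\L_\mu\G_\mu^{\Gw_\rho}[\lambda]=-\lambda$. Set $h:=w+\G_\mu^{\Gw_\rho}[\lambda]$. Then $\L_\mu h=0$ in the distribution sense, so by elliptic regularity $h$ is (a.e. equal to) an $\L_\mu$-harmonic function in $\Gw_\rho$. The remaining point is that $h$ is \emph{nonnegative}: indeed $\G_\mu^{\Gw_\rho}[\lambda]\ge 0$ and $w\ge 0$. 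Then the Representation Theorem~\ref{t-repr}, applied with $D=\Gw_\rho$ and $V=\mu/\gd_\Gw^2$ (legitimate since $C_H(\Gw_\rho)=1/4>\mu$, so $\L_\mu$ has a positive supersolution in $\Gw_\rho$ — e.g. a small multiple of the superharmonic dominant $W$, or simply $1$ when $\mu\le 0$), yields a unique $\nu\in\M^+(\partial\Gw_\rho)$ with $h=\K_\mu^{\Gw_\rho}[\nu]$. Substituting back gives $w=\K_\mu^{\Gw_\rho}[\nu]-\G_\mu^{\Gw_\rho}[\lambda]$, which is~\eqref{KmGm}.

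The main obstacle is the first step: controlling the potential near $\bdw$ so as to conclude $\lambda\in\M^+_{\gd^{\ga_+}}(\Gw_\rho)$ rather than merely knowing $\lambda$ is locally finite. The subtlety is that the naive Riesz decomposition on an increasing exhaustion produces a limit that is a priori only a nonnegative \emph{superharmonic} function, and one needs the domination by $W$ together with the sharp lower Green-kernel bound to force the \emph{integrability} of $\gd^{\ga_+}$ against $\lambda$; without domination the potential could fail to be finite. Concretely, I would argue: for fixed $x_0\in\Gw_{\rho'}$ with $\gd(x_0)$ not too small, $\G_\mu^{\Gw_{\rho'}}[\lambda\chr{\Gw_{\rho_n}'}](x_0)=w(x_0)-h_n(x_0)$ where $h_n$ is the least harmonic majorant-type correction, and monotone convergence plus $0\le h_n\le$ (the harmonic function dominating $w$, hence dominating by $W$) bounds the left side uniformly in $n$; then the Remark converts this into $\int\gd^{\ga_+}d\lambda<\infty$. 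A secondary technical point worth a sentence is verifying that $\G_\mu^{\Gw_\rho}[\lambda]$ and $\G_\mu^{\Gw_{\rho'}}[\lambda]$ differ by an $\L_\mu$-harmonic function bounded near $\bdw$, so that the integrability statement is genuinely about $\Gw_\rho$; this follows from domain monotonicity of the Green function exactly as in Lemma~\ref{KOR1}. Everything else is a routine application of the maximum principle, elliptic regularity, and the already-established kernel estimates and the results quoted from \cite{MMN}.
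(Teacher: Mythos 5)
Your proposal is correct and takes essentially the same route as the paper: set $\lambda=\L_\mu w\ge 0$, use the $\L_\mu$-superharmonic dominant to get $\lambda\in\M^+_{\delta^{\alpha_+}}(\Omega_\rho)$, note that $w+\G_\mu^{\Omega_\rho}[\lambda]$ is a nonnegative $\L_\mu$-harmonic function, and conclude by the Representation Theorem. The only difference is that the paper delegates the weighted integrability of $\lambda$ to the argument of \cite{MMN}*{Proposition 2.14}, whereas you spell out that exhaustion/Riesz-decomposition step (domination bounds the truncated Green potentials, monotone convergence, then the two-sided estimate $G_\mu^{\Omega_\rho}(x_0,\cdot)\sim\delta^{\alpha_+}$ near $\partial\Omega$), which is exactly the intended reasoning.
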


\proof
There exists a nonnegative Radon measure $\lambda$ in $\Omega_\rho$, such that $-\L_\mu w=-\lambda$ in $\Omega_\rho$.
Since $w$ is dominated by an $\L_\mu$-superharmonic function in $\Omega_\rho$ one shows, as in the proof of \cite{MMN}*{Proposition 2.14}, that
$\lambda\in\M_{\delta^{\alpha_+}}(\Omega_\rho)$. Then $v:=w + \G_\mu^{\Omega_\rho}[\lambda]$
is a nonnegative $\L_\mu$-harmonic function in $\Omega_\rho$. By the Representation Theorem,
$v=\K_\mu^{\Omega_\rho}[\nu]$ for some $\nu\in\M^+(\partial\Omega_\rho)$.
\qed

\begin{definition}\label{traces}
A Borel function $u:\Omega\to\R$ possesses a {\em normalised boundary trace} $\nu_0\in\M^+(\partial\Omega)$ if,
for some $\rho\in(0,\bar\rho]$,
\begin{equation}\label{e-trace}
\lim_{\eps\to 0}\frac{1}{\eps^{\alpha_-}}\int_{\Sigma_\eps}\big|u-\K_\mu^{\Omega_\rho}[\nu_0]\big|dS=0.
\end{equation}
The normalised boundary trace on $\bdw$ will be denoted by $\mathrm{tr}^*_{\bdw}(u)$.
\end{definition}
\Remark Since $u$ is a Borel function $u\lfloor_{\Gs_\rho}$ is well defined and \eqref{e-trace} implies that this function is in $L^1(\Gs_\ge)$ for all sufficiently small $\ge$.

We say that $u$ has a \emph{measure boundary trace} on $\Gs_\rho$ if there exists $\nu_1\in \M^+(\Gs_\rho)$ \sth
$$\lim_{a\to\rho-0}\int_{\Gs_a} u\phi\,dS\to \int_{\Gs_\rho}\phi \,d\nu_1 \forevery \phi\in C_0(\bar\Gw_\rho).$$
This trace is denoted by  $\trm_{\Gs_\rho}(u)$. If both $\trm_{\Gs_\rho}(u)$ and $\tr_{\bdw}(u)$ exist then the measure $\nu\in \M_+(\bdw_\rho)$ given by $\nu\chr{\bdw}=\tr_{\bdw} (u)$ and $\nu\chr{\Gs_\rho}=\trm_{\Gs_\rho}(u)$ is denoted by $\trm^\mu_{\bdw_\rho}(u)$.

\begin{lemma}\label{tr-unique}
The normalised boundary trace $\nu_0$ is uniquely defined, independently of $\rho$.
\end{lemma}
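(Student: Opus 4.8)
The plan is to show two things: first, that if $u$ has a normalised boundary trace $\nu_0$ as in \eqref{e-trace} for one admissible $\rho$, then the same limit holds with $\rho$ replaced by any $\rho'\in(0,\bar\rho]$ and the \emph{same} measure $\nu_0$; second, that $\nu_0$ is then uniquely determined by $u$. The key input is Theorem \ref{key-res}(i) (the estimate \eqref{e-trace-0-}, which says that the normalised integrals $\eps^{-\ga_-}\int_{\Gs_\eps}\K_\mu^{\Gw_\rho}[\sigma]\,dS$ converge to a quantity comparable to $\|\sigma\|$ for $0\neq\sigma\in\M^+(\bdw)$), together with Corollary \ref{trace=0}, which identifies $\L_\mu$-harmonic functions in $\Gw_\rho$ whose normalised trace vanishes.

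The first step is the $\rho$-independence. Fix $\rho,\rho'\in(0,\bar\rho]$, say $\rho'<\rho$. The function $w:=\K_\mu^{\Gw_\rho}[\nu_0]$ restricted to $\Gw_{\rho'}$ is a nonnegative $\L_\mu$-harmonic function in $\Gw_{\rho'}$, so by the Representation Theorem applied in $\Gw_{\rho'}$ we may write $w=\K_\mu^{\Gw_{\rho'}}[\sigma]$ for some $\sigma\in\M^+(\bdw_{\rho'})$, and we decompose $\sigma=\sigma_0+\sigma_1$ with $\sigma_0=\sigma\chr{\bdw}$ and $\sigma_1=\sigma\chr{\Gs_{\rho'}}$. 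The part $\K_\mu^{\Gw_{\rho'}}[\sigma_1]$ is $\L_\mu$-harmonic near $\bdw$ and vanishes continuously there, so it decays like $\gd^{\ga_+}$ (by the estimate \eqref{Kest3}, or equivalently \eqref{e-trace-0++}); hence $\eps^{-\ga_-}\int_{\Gs_\eps}\K_\mu^{\Gw_{\rho'}}[\sigma_1]\,dS\to 0$. Therefore, subtracting, the hypothesis \eqref{e-trace} with $\rho$ is equivalent to
\begin{equation*}
\lim_{\eps\to 0}\frac{1}{\eps^{\ga_-}}\int_{\Gs_\eps}\big|u-\K_\mu^{\Gw_{\rho'}}[\sigma_0]\big|\,dS=0,
\end{equation*}
which is exactly \eqref{e-trace} with $\rho'$ and measure $\sigma_0$. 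It remains to check $\sigma_0=\nu_0$: but $\K_\mu^{\Gw_\rho}[\nu_0]-\K_\mu^{\Gw_{\rho'}}[\sigma_0]=\K_\mu^{\Gw_{\rho'}}[\sigma_1]$ on $\Gw_{\rho'}$, and applying $\eps^{-\ga_-}\int_{\Gs_\eps}|\cdot|\,dS$ and passing to the limit gives, via \eqref{e-trace-0-} applied to the signed measure $\nu_0-\sigma_0$ (split into positive and negative parts, or directly to $|\nu_0-\sigma_0|$), that $\|\nu_0-\sigma_0\|=0$.

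The second step, uniqueness for fixed $\rho$, is then immediate: if $\nu_0$ and $\nu_0'$ both satisfy \eqref{e-trace} with the same $\rho$, the triangle inequality gives
\begin{equation*}
\lim_{\eps\to 0}\frac{1}{\eps^{\ga_-}}\int_{\Gs_\eps}\big|\K_\mu^{\Gw_\rho}[\nu_0]-\K_\mu^{\Gw_\rho}[\nu_0']\big|\,dS=0,
\end{equation*}
and since $\K_\mu^{\Gw_\rho}[\nu_0]-\K_\mu^{\Gw_\rho}[\nu_0']=\K_\mu^{\Gw_\rho}[\nu_0-\nu_0']$, the lower bound in \eqref{e-trace-0-} applied to $|\nu_0-\nu_0'|$ (after a Hahn–Jordan decomposition, noting that the positive and negative parts are mutually singular so $|\K_\mu[\nu_0-\nu_0']|\ge$ comparable integrals from each part, or more cleanly: $\int_{\Gs_\eps}|\K_\mu[\nu_0-\nu_0']|\,dS\ge c^{-1}\eps^{\ga_-}\|\nu_0-\nu_0'\|$ by linearity of the kernel and the comparison on each part) forces $\nu_0=\nu_0'$.

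The main obstacle is the bookkeeping in passing from the absolute-value estimate $\int_{\Gs_\eps}|\K_\mu^{\Gw_\rho}[\sigma]|\,dS$ to control of $\|\sigma\|$ when $\sigma$ is a \emph{signed} measure; the estimate \eqref{e-trace-0-} is stated for $\sigma\in\M^+$. The clean fix is to note that the Martin kernel is nonnegative, so for a signed $\sigma$ with Jordan decomposition $\sigma=\sigma^+-\sigma^-$ we have $\K_\mu^{\Gw_\rho}[\sigma^\pm]\ge 0$ and the two are harmonic functions; one then argues on each of $\sigma^+$, $\sigma^-$ separately after observing that a harmonic function which is the difference of two others with the same normalised trace must have vanishing normalised trace, reducing everything to the $\M^+$ case of \eqref{e-trace-0-} and Corollary \ref{trace=0}. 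Everything else — the $\gd^{\ga_+}$-decay of $\K_\mu^{\Gw_{\rho'}}[\sigma_1]$ on $\Gs_{\rho'}$, the Representation Theorem on $\Gw_{\rho'}$ — is already available from the results proved above in this section.
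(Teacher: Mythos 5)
Your reduction of the $\Gs$-supported parts of the representing measures (they contribute nothing to the normalised trace, by \eqref{Kest3} and Corollary \ref{trace=0}) is fine and coincides with the first observation in the paper's proof; the two decisive steps, however, are not justified as written. In the $\rho$-independence argument the fact you need is that the representing measure on $\bdw$ of $\K_\mu^{\Gw_\rho}[\nu_0]$, viewed in the smaller strip $\Gw_{\rho'}$, is again exactly $\nu_0$. You try to obtain this by applying \eqref{e-trace-0-} to the ``signed measure $\nu_0-\sigma_0$'', but the function $\K_\mu^{\Gw_\rho}[\nu_0]-\K_\mu^{\Gw_{\rho'}}[\sigma_0]$ is built from the Martin kernels of two \emph{different} domains, so it is not of the form $\K_\mu^{D}[\nu_0-\sigma_0]$ for any single kernel, and \eqref{e-trace-0-} (stated for one fixed kernel and positive measures) says nothing about it; moreover the two kernels are only \emph{comparable} near $\bdw$ (constants $>1$ in \eqref{Kest3}), so no norm estimate of this kind can identify the measures exactly. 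This identification is precisely the crux of the lemma; the paper gets it directly by exhibiting $\K_\mu^{\Gw_{\rho'}}[\nu_0]$, on the smaller strip, as $\K_\mu^{\Gw_\rho}[\nu]$ with $\nu=\nu_0$ on $\bdw$ and, on $\Gs_\rho$, the data of $\K_\mu^{\Gw_{\rho'}}[\nu_0]$ against the $\L_\mu$-harmonic measure, not by a norm comparison.

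The fixed-$\rho$ uniqueness step has the same defect. Writing $\nu_0-\nu_0'=\lambda_+-\lambda_-$ (Jordan decomposition), the inequality you assert, $\int_{\Gs_\eps}\big|\K_\mu^{\Gw_\rho}[\lambda_+]-\K_\mu^{\Gw_\rho}[\lambda_-]\big|\,dS\ge c^{-1}\eps^{\ga_-}\big(\|\lambda_+\|+\|\lambda_-\|\big)$, does not follow from ``linearity and the comparison on each part'': \eqref{e-trace-0-} is a two-sided bound with a constant $c>1$ (no limit is claimed), and pointwise the difference can be much smaller than either term, so from $\eps^{-\ga_-}\int_{\Gs_\eps}|\K_\mu^{\Gw_\rho}[\lambda_+]-\K_\mu^{\Gw_\rho}[\lambda_-]|\to0$ you only deduce $c^{-1}\|\lambda_+\|\le c\|\lambda_-\|$ and the symmetric inequality, which is no contradiction. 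Ruling out cancellation requires exploiting the mutual singularity of $\lambda_\pm$ by a localisation or a potential-theoretic argument; your proposed fix (``a harmonic function which is the difference of two others with the same normalised trace has vanishing normalised trace'') merely restates the hypothesis and does not reduce to the positive case, since the positive and negative parts of $\K_\mu^{\Gw_\rho}[\nu_0]-\K_\mu^{\Gw_\rho}[\nu_0']$ are not themselves kernels of the Jordan parts. The paper closes this step differently: it takes the nonnegative $\L_\mu$-subharmonic function $w=|\K_\mu^{\Gw_\rho}[\nu_0]-\K_\mu^{\Gw_\rho}[\nu_0']|$, dominated by the superharmonic $\K_\mu^{\Gw_\rho}[\nu_0]+\K_\mu^{\Gw_\rho}[\nu_0']$, applies Theorem \ref{tr-sub} to write $w=\K_\mu^{\Gw_\rho}[\chi]-\G_\mu^{\Gw_\rho}[\lambda]$, and then uses \eqref{e-trace-potential} together with \eqref{e-trace-0-} to force $\chi\chr{\bdw}=0$ and hence $w=0$. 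Some argument of this type (or an explicit localisation near $\supp\lambda_+$ and $\supp\lambda_-$) is needed to make your sketch into a proof.
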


\proof
First we note that \eqref{e-trace} remains valid if $\nu_0$ is replaced by any measure $\nu\in \M_+(\bdw_\rho)$ such that $\nu_0=\nu\chr{\bdw}$. This follows from the fact that, for every measure $\nu_\rho\in \M_+(\Gs_\rho)$,
$$\lim_{\eps\to 0}\frac{1}{\eps^{\alpha_-}}\int_{\Sigma_\eps}\K_\mu^{\Gw_\rho}[\nu_\rho]dS =0.$$
This implies that if \eqref{e-trace} holds with respect to some  $\rho\in(0,\bar\rho)$
then it is valid for any  $\rho'$ in this range. Suppose for instance that $\rho<\rho'<\bar \rho$ and put
$v=\K_\mu^{\Gw_{\rho'}}[\nu_0]$. Let $\nu\in \M_+(\bdw_\rho)$ be the measure equal to $\nu_0$ on $\bdw$ and to $h=v\lfloor_{\Gs_\rho}d\gw_\rho$ on $\Gs_\rho$. (Here $\gw_\gr$ is the $\L_\mu$-harmonic measure on $\Gs_\rho$ relative to $\Gw_{\rho'}$. Since $\Gs_\rho$ is `smooth' $\gw_\rho$ is absolutely continuous with respect to surface measure.) Then $v= \K_\mu^{\Gw_\rho}[\nu]$ in $\Gw_\rho$ and
$$\lim_{\eps\to 0} \frac{1}{\eps^{\alpha_-}}\int_{\Sigma_\eps}|\K_\mu^{\Gw_\rho}[\nu]-\K_\mu^{\Gw_\rho}[\nu_0]|dS=0.$$

It remains to verify that, if \eqref{e-trace} holds then $\nu_0$ is uniquely determined by $u$ in a fixed domain $\Gw_\rho$.

 Suppose, by negation, that there exist  $\nu_1,\nu_2\in \M_+(\bdw)$ \sth \eqref{e-trace} holds for both $v_1=K_\mu^{\Gw_\rho}[\nu_1]$ and $v_2=K_\mu^{\Gw_\rho}[\nu_2]$. Then $w:=|v_1-v_2|$ is $\L_\mu$-subharmonic and $\tr_{\bdw}(w)=0$.

Clearly $w$ is dominated by the $\L_\mu$-superharmonic function $v_1+v_2$.
Therefore, by Theorem \ref{tr-sub}  there exist $\lambda\in\M^+_{\delta^{\alpha_+}}(\Omega_\rho)$ and
$\chi\in\M^+(\partial\Omega_{\rho})$ such that,
$$
  w= \K_\mu^{\Omega_\rho}[\chi]-\G_\mu^{\Omega_\rho}[\lambda].
$$
Thus $w+\G_\mu^{\Omega_\rho}[\lambda]$ is $\L_\mu$-harmonic. By \eqref{e-trace-potential} and the fact that $\tr_{\bdw}w=0$ we have $\tr_{\bdw} (w+\G_\mu^{\Omega_\rho}[\lambda])=0$. Hence $w=0$
 and therefore $\nu_1=\nu_2$.
\qed

\begin{theorem}\label{c-tr-sub} Let $w$ be a nonnegative $\L_\mu$-subharmonic function in $\Omega_\rho$ dominated by an $\L_\mu$-superharmonic function in this domain. Then the boundary trace $\nu=\trm^\mu_{\bdw_\rho}(w)$ is well-defined and
\begin{equation}\label{tr-sub1}
 w\leq \K_\mu^{\Gw_\rho}[\nu].
\end{equation}

If $\nu_0:=\nu\chr{\bdw}$ then,
\begin{equation}\label{tr-sub2}
\lim_{x\to\partial\Omega}\frac{w(x)}{\K_\mu^{\Omega_\rho}[\nu_0](x)}=1
\quad\text{non-tangentially, $\nu_0$-a.e. on $\partial\Omega$}.
\end{equation}

If $\nu_0=0$ then,
\begin{equation}\label{tr-sub3}
\limsup_{x\to\partial\Omega}\frac{w(x)}{\delta^{\alpha_+}(x)}<\infty.
\end{equation}
 \end{theorem}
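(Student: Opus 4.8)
The plan is to read off all four assertions from the Riesz-type representation of Theorem~\ref{tr-sub}. By that theorem there is a measure $\lambda\in\M^+_{\gd^{\ga_+}}(\Gw_\rho)$ with $\L_\mu w=\lambda$ and a measure $\nu\in\M^+(\bdw_\rho)$ such that $w=\K_\mu^{\Gw_\rho}[\nu]-\G_\mu^{\Gw_\rho}[\lambda]$. Put $\nu_0:=\nu\chr{\bdw}$ and $\nu_\rho:=\nu\chr{\Gs_\rho}$, so that
$$
w=\K_\mu^{\Gw_\rho}[\nu_0]+\K_\mu^{\Gw_\rho}[\nu_\rho]-\G_\mu^{\Gw_\rho}[\lambda],
$$
and note that $\G_\mu^{\Gw_\rho}[\lambda]\ge0$ since $\lambda\ge0$; thus \eqref{tr-sub1} follows as soon as we know that $\nu=\trm^\mu_{\bdw_\rho}(w)$.

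To identify the trace I would argue separately on $\Gs_\rho$ and on $\bdw$. Near $\Gs_\rho$ the operator $\L_\mu$ is uniformly elliptic (the weight $\mu/\gd_\Gw^2$ is bounded there) and $\Gs_\rho$ is a smooth hypersurface, so classical elliptic theory gives that $\G_\mu^{\Gw_\rho}[\lambda]$ vanishes continuously on $\Gs_\rho$, that $\K_\mu^{\Gw_\rho}[\nu_0]$ has zero measure trace on $\Gs_\rho$ (each $K_\mu^{\Gw_\rho}(\cdot,y)$ with $y\in\bdw$ vanishes continuously on $\Gs_\rho$), and that $\K_\mu^{\Gw_\rho}[\nu_\rho]$ has measure trace $\nu_\rho$ on $\Gs_\rho$; hence $\trm_{\Gs_\rho}(w)=\nu_\rho$ exists. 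On $\bdw$ I would combine the triangle inequality with the limit $\eps^{-\ga_-}\int_{\Gs_\eps}\K_\mu^{\Gw_\rho}[\nu_\rho]\,dS\to0$ established in the proof of Lemma~\ref{tr-unique} and the limit $\eps^{-\ga_-}\int_{\Gs_\eps}\G_\mu^{\Gw_\rho}[\lambda]\,dS\to0$ of \eqref{e-trace-potential} to get $\eps^{-\ga_-}\int_{\Gs_\eps}|w-\K_\mu^{\Gw_\rho}[\nu_0]|\,dS\to0$, i.e.\ $\tr_{\bdw}w=\nu_0$. Therefore $\trm^\mu_{\bdw_\rho}(w)=\nu$ and \eqref{tr-sub1} holds. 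If in addition $\nu_0=0$, then $w\le\K_\mu^{\Gw_\rho}[\nu_\rho]$, and since $\nu_\rho$ is carried by $\Gs_\rho$ this function satisfies the hypothesis of Corollary~\ref{trace=0} (again by the limit from Lemma~\ref{tr-unique}), so $\K_\mu^{\Gw_\rho}[\nu_\rho]\le c_t\,\gd^{\ga_+}$ in $\Gw_t$ for every $t\in(0,\bar\rho)$, which is \eqref{tr-sub3}.

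It remains to prove the non-tangential limit \eqref{tr-sub2}, where we may assume $\nu_0\ne0$ (otherwise it is vacuous). From the decomposition,
$$
\frac{w}{\K_\mu^{\Gw_\rho}[\nu_0]}-1=\frac{\K_\mu^{\Gw_\rho}[\nu_\rho]-\G_\mu^{\Gw_\rho}[\lambda]}{\K_\mu^{\Gw_\rho}[\nu_0]},
$$
so it suffices to show that $\K_\mu^{\Gw_\rho}[\nu_\rho]/\K_\mu^{\Gw_\rho}[\nu_0]$ and $\G_\mu^{\Gw_\rho}[\lambda]/\K_\mu^{\Gw_\rho}[\nu_0]$ tend to $0$ non-tangentially for $\nu_0$-a.e.\ $y\in\bdw$. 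The first does so by the relative Fatou theorem, since $\nu_\rho$ and $\nu_0$ are mutually singular (carried by the disjoint sets $\Gs_\rho$ and $\bdw$); equivalently, Corollary~\ref{trace=0}(ii) together with the boundary Harnack principle \eqref{BHP} yields $\K_\mu^{\Gw_\rho}[\nu_\rho]\le c\,G_\mu^{\Gw_\rho}(\cdot,x_0)$ near $\bdw$ for a fixed interior point $x_0$, which reduces it to the second type. The second tends to $0$ $\nu_0$-a.e.\ by the Fatou--Na\"{\i}m--Doob theorem: the greatest $\L_\mu$-harmonic minorant of the potential $\G_\mu^{\Gw_\rho}[\lambda]$ is $0$, hence its ratio to the positive $\L_\mu$-harmonic function $\K_\mu^{\Gw_\rho}[\nu_0]$ has non-tangential limit $0$ at $\nu_0$-a.e.\ boundary point. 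Adding the two contributions gives \eqref{tr-sub2}.

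The step I expect to be the main obstacle is this last one: one has to ensure that the relative Fatou and Fatou--Na\"{\i}m--Doob theorems are available for $\L_\mu$ in the strip $\Gw_\rho$ under the sole hypothesis $\mu<1/4=C_H(\Gw_\rho)$, i.e.\ that Ancona's potential theory (Green function, Martin representation, boundary Harnack principle) supplies these limit theorems for $\L_\mu$, and that, $\Gw_\rho$ being a smooth domain, the minimal-fine limits so obtained coincide with non-tangential limits for $\nu_0$-a.e.\ boundary point. Everything else is bookkeeping with Theorem~\ref{tr-sub}, \eqref{e-trace-potential}, the limit from the proof of Lemma~\ref{tr-unique}, and Corollary~\ref{trace=0}.
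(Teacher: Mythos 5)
Your proposal is correct and follows essentially the same route as the paper: the decomposition $w=\K_\mu^{\Gw_\rho}[\nu]-\G_\mu^{\Gw_\rho}[\lambda]$ from Theorem \ref{tr-sub}, the vanishing normalized trace of the potential part (Theorem \ref{key-res}(ii)) to identify the trace and obtain \eqref{tr-sub1}, the Fatou--Na\"{\i}m--Doob/relative Fatou theorems (available for $\L_\mu$ in $\Gw_\rho$ by Ancona's theory, as the paper notes by citing Ancona) for \eqref{tr-sub2}, and Corollary \ref{trace=0} for \eqref{tr-sub3}. The extra bookkeeping you supply for the trace on $\Gs_\rho$ and your division by $\K_\mu^{\Gw_\rho}[\nu_0]$ instead of $\K_\mu^{\Gw_\rho}[\nu]$ are only cosmetic variants of the paper's argument.
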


\proof The first statement \eqref{tr-sub1} follows from \eqref{KmGm} and Theorem \ref{key-res}~(ii).

 The second statement \eqref{tr-sub2} follows from \eqref{KmGm} and the fact that $\G_\mu^{\Omega_\rho}[\lambda]$ is an $\L_\mu$-potential (i.e. a positive superharmonic function that does not dominate any positive $\L_\mu$-harmonic function). This fact implies (see, e.g. \cite{Ancona-90}):
 $$\lim_{x\to\partial\Omega} \frac{\G_\mu^{\Omega_\rho}[\lambda](x)}{\K_\mu^{\Omega_\rho}[\nu](x)}\to 0 \quad\text{$\nu$-a.e. on $\bdw$}.$$
 By Fatou's limit theorem
 $$\lim_{x\to\partial\Omega} \frac{\K_\mu^{\Omega_\rho}[\nu_0](x)}{\K_\mu^{\Omega_\rho}[\nu](x)}=1
 \quad\text{$\nu$-a.e. on $\bdw$}.$$
Therefore \eqref{KmGm} implies \eqref{tr-sub2}.

The third statement \eqref{tr-sub3} follows from \eqref{tr-sub1} and Corollary \ref{trace=0}.
\qed

\begin{corollary}\label{PL-nu-converse}
Let $w$ be a nonnegative $\L_\mu$-subharmonic function in $\Gw_\rho$ for some $\rho\in (0,\bar\rho)$.
Then $w$ possesses a normalised boundary trace in $\M^+(\partial\Omega)$
if and only if $w$ is dominated by a positive $\L_\mu$-superharmonic function $v$ in a strip around $\bdw$.
\end{corollary}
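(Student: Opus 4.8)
The plan is to prove the two implications separately, leaning on the machinery already built up for subharmonic functions dominated by superharmonic ones. The backward implication is the easier half: assuming $w$ is dominated in a strip around $\bdw$ by a positive $\L_\mu$-superharmonic function $v$, we may shrink $\rho$ if necessary so that the domination holds throughout $\Gw_\rho$. Then Theorem \ref{c-tr-sub} applies directly: the boundary trace $\nu=\trm^\mu_{\bdw_\rho}(w)$ is well-defined, and in particular $\nu_0:=\nu\chr{\bdw}\in\M^+(\bdw)$. By Definition \ref{traces} and the identity \eqref{KmGm} combined with \eqref{e-trace-potential} of Theorem \ref{key-res}(ii), the normalised boundary trace of $w$ on $\bdw$ equals $\nu_0$; indeed, $w=\K_\mu^{\Gw_\rho}[\nu]-\G_\mu^{\Gw_\rho}[\lambda]$, the potential term contributes nothing to the normalised trace by \eqref{e-trace-potential}, and the $\Sigma_\rho$-part of $\nu$ contributes nothing by the displayed limit at the start of the proof of Lemma \ref{tr-unique}. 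So $w$ has normalised boundary trace $\nu_0$, as required.

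For the forward implication, suppose $w$ possesses a normalised boundary trace $\nu_0\in\M^+(\bdw)$, i.e. \eqref{e-trace} holds with $\K_\mu^{\Gw_\rho}[\nu_0]$. Set $h:=\K_\mu^{\Gw_{\rho'}}[\nu_0]$ for a fixed $\rho'\in(\rho,\bar\rho]$; this is a positive $\L_\mu$-harmonic function in $\Gw_{\rho'}$, hence in particular a positive $\L_\mu$-superharmonic function in a strip around $\bdw$. The claim is that $w\le C h$ in some smaller strip for a suitable constant $C$, which would finish the proof (a constant multiple of a positive $\L_\mu$-harmonic function is again positive $\L_\mu$-superharmonic). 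To establish this, I would argue by contradiction using the structural dichotomy for subharmonic functions: if no such domination held, one should be able to extract the failure of the normalised-trace hypothesis. More concretely, I expect the cleanest route is to invoke Theorem \ref{subh1} applied on $\Gw_{\bar\rho}$ after extending/restricting appropriately — but since $w$ is only assumed subharmonic on $\Gw_\rho$, not on $\Gw_{\bar\rho}$, one first replaces $\bar\rho$ by $\rho$ throughout (the local Hardy inequality and all kernel estimates of Section \ref{sect-2} hold verbatim with $\rho$ in place of $\bar\rho$, since $\rho<\bar\rho$). Then Theorem \ref{subh1} (read with $\rho$ for $\bar\rho$) says precisely that a positive $\L_\mu$-subharmonic function in $\Gw_\rho$ has a normalised boundary trace if and only if it is dominated by an $\L_\mu$-harmonic function in a sub-strip — and an $\L_\mu$-harmonic function is $\L_\mu$-superharmonic, giving the desired $v$.

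The main obstacle, as I see it, is the bookkeeping around the parameter $\rho$: Theorems \ref{subh1}, \ref{subh2}, \ref{c-tr-sub} are each stated for functions subharmonic on the full strip $\Gw_{\bar\rho}$, whereas the corollary only posits subharmonicity on $\Gw_\rho$ for some $\rho<\bar\rho$. One must check that every ingredient used — Lemma \ref{t-Hardy-loc}, the Green/Martin kernel comparisons of Lemma \ref{KOR1} and Theorem \ref{KOR}, the trace estimates of Theorem \ref{key-res}, and Theorem \ref{tr-sub} — is insensitive to this substitution, which it is, because $C_H(\Gw_\rho)=C_H^{\bdw}(\Gw_\rho)=1/4$ for all $\rho\le\bar\rho$ and the operator $\L_\mu$ agrees with $\L_{\mu,\Gw_\rho}$ near $\bdw$. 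Once this observation is recorded, the corollary is a direct combination of the backward direction of Theorem \ref{c-tr-sub} (extracting the trace) and the forward direction of Theorem \ref{subh1} (producing the dominating superharmonic function), with the only genuine content being the remark — already essentially contained in Lemma \ref{tr-unique} — that the normalised trace of $w$ and of $\K_\mu^{\Gw_\rho}[\nu_0]$ coincide regardless of which admissible $\rho$ is used.
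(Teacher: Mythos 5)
Your backward implication (domination by a superharmonic function implies existence of the normalised trace) is correct and is essentially the paper's own argument: Theorem \ref{tr-sub} gives the decomposition \eqref{KmGm}, the potential part is killed by \eqref{e-trace-potential}, and the $\Gs_\rho$-part of the measure contributes nothing, so $\tr_{\bdw}(w)=\nu\chr{\bdw}$.

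The forward implication, however, has a genuine gap. Theorem \ref{subh1} is just the introduction's restatement of this very corollary; it has no independent proof in the paper (Corollary \ref{PL-nu-converse} is precisely what establishes it), so invoking it here is circular, and the fallback ``argue by contradiction using the structural dichotomy'' is not an argument --- Theorem \ref{subh2} already assumes the domination you are trying to produce, and Corollary \ref{tr=zero} is derived from the present corollary. Moreover, your target bound $w\le C\,\K_\mu^{\Gw_{\rho'}}[\nu_0]$ cannot be right as stated: if $\nu_0=0$ the right-hand side vanishes while $w$ may well be positive (e.g.\ $w\sim\gd^{\ga_+}$), so any dominating function must also carry data on $\Gs_\rho$. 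The paper's proof of this direction is constructive: since $w$ is subharmonic, $\L_\mu w=\tau\ge 0$ for a Radon measure $\tau$, and (after shrinking $\rho$) $w$ also has a measure boundary trace $\nu_\rho$ on $\Gs_\rho$; one solves the linear problem $-\L_\mu v=-\tau_\gb$ in the annuli $D_\gb\sms\bar D_\rho$ with the corresponding boundary data and lets $\gb\to 0$, the normalised trace hypothesis identifying the harmonic part on $\bdw$ as $\K_\mu^{\Gw_\rho}[\nu_0]$. This forces $\G_\mu^{\Gw_\rho}[\tau]<\infty$, hence $\tau\in\M^+_{\gd^{\ga_+}}(\Gw_\rho)$, and yields $w+\G_\mu^{\Gw_\rho}[\tau]=\K_\mu^{\Gw_\rho}[\nu_0+\nu_\rho]$, so that $w\le\K_\mu^{\Gw_\rho}[\nu_0+\nu_\rho]$, which is the required dominating $\L_\mu$-harmonic (hence superharmonic) function. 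Some such construction is indispensable; as written, your proposal only proves the easy half.
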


\proof If $w$ is dominated by a positive $\L_\mu$-superharmonic function in $\Gw_\rho$ then the existence of $\tr_{\bdw} (w)$ follows from \eqref{e-trace-potential} and Theorem \ref{tr-sub}.

Next suppose that $w$ has a normalized boundary trace $\nu_0\in \M^+(\bdw)$. Without loss of generality we may assume that it also has a measure boundary trace $\nu_\rho$ on $\Gs_\rho$. Since $u$ is $\L_\mu$-subharmonic, there exists a positive Radon measure $\tau$ in $\Gw$ \sth
$$-\L_\mu u=-\tau.$$
Let
$\tau_\gb:=\tau\chr{D_\gb\sms \bar D_\rho}$, $w=\K_\mu^{\Gw_\rho}[\nu_0+\nu_\rho]$ and $\nu_\gb=w\lfloor_{\Gs_\gb}$.

 Let $u_\gb$ be the solution of the boundary value problem,
$$\BAL
-\L_\mu v&=-\tau_\gb \text{ in }D_\gb\sms \bar D_\rho,\\
 v&=\nu_\rho \text{ on }\Gs_\rho, \quad v=\nu_\gb \text{ on }\Gs_\gb.
 \EAL$$
Then
$$u_\gb+ \G_\mu^{D_\gb\sms \bar D_\rho}[\tau_\gb]=w.$$
It follows that
$$G_\mu^{\Gw_\rho}[\tau]=\lim_{\gb\to 0}\G_\mu^{D_\gb\sms \bar D_\rho}[\tau_\gb]<\infty,$$
which in turn implies that $\tau \in \M_+(\Gw;\gd^{\ga_+})$ and finally
$$u+ G_\mu^{\Gw_\rho}[\tau]=w.$$
In particular,
\begin{equation}\label{u<Kmr}
u\leq w= \K_\mu^{\Gw_\rho}[\nu_0+\nu_\rho].
\end{equation}
\qed

\begin{corollary}\label{tr=zero}
(i)  Suppose that $u$ is positive and $\L_\mu$-subharmonic  in $\Gw_{\bar\rho}$. Then  $\tr_{\bdw}=0$ if and only if, for every $\rho\in (0,\bar\rho)$, there exists a constant $c_\rho$ such that
\begin{equation}\label{tr*=0}
 u(x)\leq c_\rho\gd(x)^{\ga_+} \forevery x\in \Gw_\rho.
 \end{equation}
(ii) Suppose that $u$ is positive and  $\L_\mu$-superharmonic in $\Gw_{\bar\rho}$. Then $u$ has a normalized boundary trace $\nu\in \M_+(\bdw)$ and consequently there exists $c_\rho$ \sth
\begin{equation}\label{tr*exists}
 \int_{\Gs_\gb}udS\leq c_\rho\gb^{\ga_-} \forevery \gb\in (0,\rho).
 \end{equation}
\end{corollary}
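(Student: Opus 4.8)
The plan is to deduce both parts from the results already established, principally Theorem \ref{tr-sub}, Corollary \ref{trace=0}, Corollary \ref{c-tr-sub}, and Corollary \ref{tr=zero-converse} (i.e.\ Corollary \ref{PL-nu-converse}). For part (i), the key observation is that a positive $\L_\mu$-subharmonic function $u$ in $\Gw_{\bar\rho}$ is automatically dominated, in $\Gw_\rho$ for $\rho<\bar\rho$, by a positive $\L_\mu$-superharmonic function: indeed, by Lemma \ref{t-GST} there exists a positive $\L_\mu$-superharmonic function $v_0$ in $\Gw_{\bar\rho}$ (since $\mu<1/4$), and by Harnack-type comparison on the compact set $\bar\Gw_\rho\cap D_{\rho/2}$ one can rescale $v_0$ to dominate $u$ on $\Gs_{\rho/2}$; since $u-cv_0$ is $\L_\mu$-subharmonic in $\Gw_{\rho/2}$ and $\le 0$ on $\Gs_{\rho/2}$, a maximum-principle argument (using that $v_0>0$ so that $u/v_0$ is subharmonic for the $v_0$-transformed operator, and that $\tr_{\bdw}$-behaviour is controlled) gives $u\le cv_0$ near $\bdw$. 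Hence Corollary \ref{c-tr-sub} applies: $u$ has a well-defined boundary trace. If $\tr_{\bdw}u=\nu_0=0$, then \eqref{tr-sub3} gives $\limsup_{x\to\bdw}u(x)/\gd(x)^{\ga_+}<\infty$, which upgrades (after shrinking the strip and using interior Harnack to control $u$ on $\bar\Gw_\rho\cap D_\beta$) to the global bound \eqref{tr*=0} on each $\Gw_\rho$. Conversely, if \eqref{tr*=0} holds for every $\rho$, then $\int_{\Gs_\eps}u\,dS\le c_\rho\eps^{\ga_+}\cdot|\Gs_\eps|\le c'\eps^{\ga_+}$, so $\eps^{-\ga_-}\int_{\Gs_\eps}u\,dS\le c'\eps^{\ga_+-\ga_-}\to 0$ since $\ga_+>\ga_-$ whenever $\mu<1/4$; combined with \eqref{e-trace-0-} this forces $\tr_{\bdw}u=0$.

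For part (ii), a positive $\L_\mu$-superharmonic function $u$ in $\Gw_{\bar\rho}$ is in particular dominated by itself, an $\L_\mu$-superharmonic function, and it is (trivially) $\L_\mu$-subharmonic? No---instead I would argue: $u$ superharmonic means $-\L_\mu u=\tau\ge 0$ is a nonnegative Radon measure, and the Riesz-type decomposition available in $\Gw_\rho$ (cf.\ the argument in the proof of Theorem \ref{tr-sub} run with the roles reversed) writes $u=\K_\mu^{\Gw_\rho}[\nu]+\G_\mu^{\Gw_\rho}[\tau]$ with $\nu\in\M^+(\bdw_\rho)$, once one checks $\tau\in\M^+_{\gd^{\ga_+}}(\Gw_\rho)$. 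The latter integrability follows because $u$ itself is a finite positive superharmonic majorant, so $\G_\mu^{\Gw_\rho}[\tau]\le u<\infty$ pointwise, and by the Remark after Theorem \ref{key-res} this is equivalent to $\tau\in\M^+_{\gd^{\ga_+}}(\Gw_\rho)$. Granted the decomposition, $u$ has the normalised boundary trace $\nu_0:=\nu\chr{\bdw}\in\M^+(\bdw)$ by \eqref{e-trace-potential} applied to the potential part. Finally \eqref{tr*exists} follows from \eqref{e-trace-0-} applied to $\K_\mu^{\Gw_\rho}[\nu_0]$ (giving $\eps^{-\ga_-}\int_{\Gs_\eps}\K_\mu^{\Gw_\rho}[\nu_0]\,dS\le c\|\nu_0\|$) together with \eqref{Gmt'} applied to $\G_\mu^{\Gw_\rho}[\tau]$ and the fact that $\K_\mu^{\Gw_\rho}[\nu_\rho]$ with $\nu_\rho=\nu\chr{\Gs_\rho}$ contributes a term that is $o(\eps^{\ga_-})$; summing these, $\int_{\Gs_\eps}u\,dS\le c_\rho\eps^{\ga_-}$.

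The main obstacle I anticipate is the very first step of part (i): establishing that an arbitrary positive $\L_\mu$-subharmonic function $u$ in $\Gw_{\bar\rho}$ is dominated near $\bdw$ by \emph{some} positive $\L_\mu$-superharmonic function, so that Corollary \ref{c-tr-sub} and Corollary \ref{PL-nu-converse} become applicable. One must be careful that $u$ may blow up at $\bdw$ faster than $\gd^{\ga_-}$, in which case no superharmonic majorant exists; but then one should show directly that $\eps^{-\ga_-}\int_{\Gs_\eps}u\,dS$ cannot tend to zero, i.e.\ that \eqref{tr*=0} fails --- this is essentially the content of Corollary \ref{c-subh}, whose proof (the improved version of \cite{BMR08}*{Thm.~2.9}) I would invoke to dichotomize: either \eqref{tr*=0} holds or $\limsup_{\beta\to0}\beta^{-\ga_-}\int_{\Gs_\beta}u\,dS>0$. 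In the first case $\tr_{\bdw}u=0$ by the computation above; in the second case the domination by a superharmonic function does hold (one can take the superharmonic majorant furnished by solving an appropriate Dirichlet problem on $\Gw_\rho$ with data $u\chr{\Gs_\rho}$ plus a potential term absorbing $\L_\mu u$), Corollary \ref{c-tr-sub} applies, $\tr_{\bdw}u=\nu_0\ne0$, and \eqref{tr*=0} fails since \eqref{tr-sub2} forces $u$ to behave like $\gd^{\ga_-}$, not $\gd^{\ga_+}$, $\nu_0$-a.e. This case analysis closes the equivalence in (i). The rest is bookkeeping with the estimates \eqref{e-trace-0-}, \eqref{Gmt'}, \eqref{Kest3}, and the inequality $\ga_+>\ga_-$.
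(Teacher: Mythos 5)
Part (ii) of your proposal and the easy implication of (i) are essentially the paper's own argument (Riesz decomposition of the superharmonic $u$ in $\Gw_\rho$ into $\G_\mu^{\Gw_\rho}[\tau]+\K_\mu^{\Gw_\rho}[\nu]$, integrability of $\tau$ against $\gd^{\ga_+}$, then Theorem \ref{key-res}), so those parts are fine. The gap is in the forward implication of (i). Your opening step --- that \emph{every} positive $\L_\mu$-subharmonic $u$ in $\Gw_{\bar\rho}$ is dominated near $\bdw$ by $cv_0$, with $v_0$ the superharmonic function from Lemma \ref{t-GST}, via a comparison of $u-cv_0$ in $\Gw_{\rho/2}$ --- is false: $\prt\Gw_{\rho/2}$ consists of $\Gs_{\rho/2}$ \emph{and} $\bdw$, and on $\bdw$ you have no control of $u-cv_0$, so no maximum principle closes that argument. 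In fact no such majorant exists in general: if $u\le v$ with $v$ positive and $\L_\mu$-superharmonic, then part (ii) (or Theorem \ref{c-tr-sub}) forces $\int_{\Gs_\gb}u\,dS=O(\gb^{\ga_-})$, whereas, e.g., $u=\gd^{-s}$ with $s$ large is positive and $\L_\mu$-subharmonic in a thin strip and violates this bound. This is precisely why Theorem \ref{subh1} and Corollary \ref{PL-nu-converse} are stated as equivalences rather than being automatic.

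Your fallback does not repair this. Invoking Corollary \ref{c-subh} is circular: the negation of \eqref{e-subh} is exactly the statement $\tr_{\bdw}u=0$, so Corollary \ref{c-subh} \emph{is} the nontrivial implication of part (i); it is announced among the main results and is obtained \emph{from} the present corollary, not before it, and the version in \cite{BMR08}*{Thm.~2.9} assumes the very domination by a harmonic function that is missing here. Moreover, in your second branch you again assert that a superharmonic majorant ``does hold'' by solving a Dirichlet problem with data $u\lfloor_{\Gs_\rho}$; a subharmonic function blowing up at $\bdw$ is not dominated by that solution, so this claim fails for the same reason as above. The correct and short route is the paper's: the hypothesis $\tr_{\bdw}u=0$ already asserts that the normalized boundary trace \emph{exists}, so Corollary \ref{PL-nu-converse} supplies a positive $\L_\mu$-superharmonic majorant in a strip; Theorem \ref{tr-sub} then gives $u=\K_\mu^{\Gw_\rho}[\nu]-\G_\mu^{\Gw_\rho}[\gl]$, and \eqref{e-trace-potential} together with \eqref{e-trace-0-} forces $\nu\chr{\bdw}=0$, whence $u\le\K_\mu^{\Gw_\rho}[\nu\chr{\Gs_\rho}]\sim\gd^{\ga_+}$ near $\bdw$ by Corollary \ref{trace=0}, which is \eqref{tr*=0}.
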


\proof (i)  Obviously \eqref{tr*=0} implies that $\tr_{\bdw}(u)=0$.
Conversely assume that $\tr_{\bdw}(u)=0$.

By the previous corollary $u$ is dominated by an $\L_\mu$-harmonic function. Therefore, by Theorem \ref{tr-sub},
there exist $\lambda\in\M^+_{\delta^{\alpha_+}}(\Omega_\rho)$ and
$\nu\in\M^+(\partial\Omega_{\rho})$ such that
$u=\K_\mu^{\Omega_\rho}[\nu]-\G_\mu^{\Omega_\rho}[\lambda].$
Since $\tr_{\bdw}(u)=0$, $\nu_0=\nu\chr{\bdw}=0$. Hence $u<\K_\mu^{\Omega_\rho}[\nu_\gr]$ where $\nu_\gr=\nu\chr{\Gs_\rho}$. Therefore the result follows from  Corollary \ref{trace=0}.
\medskip

(ii) By the Riesz decomposition theorem (see \cite{Ancona-90}), $u=u_p+u_h$ where $u_p$ is an
$\L_\mu$-potential and $u_h$ is a nonnegative $\L_\mu$-harmonic function in
$\Gw_\rho$. It is known that every $\L_\mu$-potential is the Green potential of a positive measure. Thus there exists $\tau\in \M_+(\Gw;\gd^{\ga_+})$ such that $u_p=\G_\mu^{\Gw_\rho}[\tau]$.
By the Representation Theorem $u_h=\K_\mu^{\Gw_\rho}[\nu]$ for some $\nu\in \M_+(\bdw_\rho)$.  Thus
\begin{equation}\label{u=G+K}
 u=\G_\mu^{\Gw_\rho}[\tau] + \K_\mu^{\Gw_\rho}[\nu].
\end{equation}
The required result follows from Theorem \ref{key-res}.
\qed

\section{$\L_\mu$-moderate solutions of nonlinear equation}\label{sect-3}

In this section we study the nonlinear equation
\begin{equation*}\tag{$P_\mu$}\label{*}
-\L_\mu u+|u|^{q-1}u=0\quad\text{in $\Omega$},
\end{equation*}
where $\Omega\subset\R^N$ is a bounded smooth domain,
$\mu<1/4$ and $q>1$.

\subsection{Preliminaries.}

 Suppose that $u\in L^q_{\loc}(\Gw)$ is either a subsolution or a supersolution of \eqref{*}, in the distribution sense. Then, $u\in W^{1,p}_{\loc}(\Gw)$ for $1\leq p <N/(N-1)$. If, in addition, $u$ is a distributional \emph{solution} of \eqref{*} then it is also a classical solution.

Consequently, if $u\in L^q_{\loc}(\Gw)$ is a distributional subsolution in $\Gw$ then
\begin{equation}\label{weak}
\int_{\Gw}\nabla u \cdot\nabla\varphi\,dx-
\int_{\Gw}\frac{\mu}{\delta^2}u \varphi\,dx\,
+\int_{\Gw}|u|^{q-1}u\varphi\,dx\le 0
\quad\forall\:0\le\varphi\in C^\infty_c(\Gw).
\end{equation}
If, in addition, $u \in H^1_{\loc}(\Gw)$ then \eqref{weak} holds for every $\varphi\in H^1_c(\Gw)$.

A similar statement holds for supersolutions, in which case the inequality sign in \eqref{weak}
is inversed. Of course these statements remain valid for local subsolutions and supersolutions (in a subdomain $G\sbs \Gw$).

We state below two results from \cite{BMR08} that will be used in the sequel.

\begin{lemma} {\sc (Comparison principle \cite{BMR08}*{Lemma 3.2})} \label{comparison}
\begin{enumerate}
\item[$(i)$] Let $G$ be open with $G\subset\Omega$.
Let $0\le \usub, \usup \in H^1_{loc}(G)\cap C(G)$ be a pair of sub and supersolutions to \eqref{*} in $G$
such that $$\limsup_{x\to\partial G}[ \usub(x)-\usup(x)]<0.$$
Then $\usub \leq \usup$ in $G$.
\item[$(ii)$]
Let $G$ be open with $\overline{G}\subset\Omega$.
Let $\usub, \usup\in H^1(G) \cap C(\overline{G})$ be a pair of sub and supersolutions
to \eqref{*} in $G$ and $\usub\leq \usup$ on $\partial G$.
Then $\usub\leq \usup$ in $G$.
\label{hardy_cp}
\end{enumerate}
\end{lemma}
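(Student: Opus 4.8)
The plan is to prove both parts by the classical weak–comparison (energy) argument. Set $w:=\usub-\usup$ and test the difference of the two weak (in)equalities \eqref{weak} against a suitable nonnegative truncation of $w^+$, exploiting two structural features of \eqref{*}: the absorption nonlinearity $s\mapsto|s|^{q-1}s$ is strictly increasing on $\R$, and the Hardy (zeroth–order) term can be neutralised by a multiplicative substitution with a positive $\L_\mu$-superharmonic weight.

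\emph{Admissible test function and the energy inequality.} In case (ii), $\usub,\usup\in C(\overline G)$ with $\usub\le\usup$ on $\partial G$, so for each $\eps>0$ the set $\{w\ge\eps\}$ is a compact subset of $G$; hence $\varphi_\eps:=(w-\eps)^+$ lies in $H^1(G)$ and has compact support in $G$, so (as $\usub,\usup\in H^1(G)$) it is admissible in \eqref{weak}. In case (i), $\limsup_{x\to\partial G}[\usub-\usup]<0$ yields $c<0$, $d>0$ with $w<c$ whenever $\dist(x,\partial G)<d$, so again $\{w\ge\eps\}$ is compactly contained in $G$ and $\varphi_\eps$ has compact support; here $\usub,\usup\in H^1_{\loc}(G)$ suffices. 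Subtracting \eqref{weak} for the subsolution $\usub$ from the reversed inequality for the supersolution $\usup$, tested against $\varphi_\eps\ge0$, gives
\[
\int_G|\nabla\varphi_\eps|^2\,dx-\int_G\frac{\mu}{\delta^2}\,w\,\varphi_\eps\,dx+\int_G\big(|\usub|^{q-1}\usub-|\usup|^{q-1}\usup\big)\varphi_\eps\,dx\le0,
\]
using $\nabla w\cdot\nabla\varphi_\eps=|\nabla\varphi_\eps|^2$ a.e. On $\{\varphi_\eps>0\}=\{w>\eps\}$ one has $\usub>\usup$, so monotonicity of $s\mapsto|s|^{q-1}s$ makes the last integrand nonnegative. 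If $\mu\le0$ the Hardy integrand $-\frac{\mu}{\delta^2}w\varphi_\eps$ is also $\ge0$ there, whence $\int_G|\nabla\varphi_\eps|^2\le0$, so $\varphi_\eps\equiv0$, i.e.\ $w\le\eps$; letting $\eps\downarrow0$ gives $\usub\le\usup$.

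\emph{The case $\mu>0$ — the main obstacle.} Here $-\L_\mu$ is not positivity–preserving and the Hardy term in the displayed inequality has the unfavourable sign, so I would first remove it. Pick a positive $\L_\mu$-superharmonic function $\phi$ on the region under consideration — available by Lemma \ref{t-GST} on any strip $\Gw_\rho$ with $\rho\le\bar\rho$ for every $\mu<1/4$, and on all of $\Gw$ when $\mu<C_H(\Gw)$ — and substitute $\usub=\phi\,\underline v$, $\usup=\phi\,\overline v$. Using the identity $-\L_\mu(\phi v)=-\phi^{-1}\operatorname{div}(\phi^2\nabla v)+v\,(-\L_\mu\phi)$ together with $-\L_\mu\phi\ge0$ (and, for case (i), $\underline v,\overline v\ge0$), one checks that $\underline v$, resp.\ $\overline v$, is a sub-, resp.\ super-, solution of the divergence–form equation $-\operatorname{div}(\phi^2\nabla v)+\phi^{q+1}|v|^{q-1}v\le0$, resp.\ $\ge0$, which carries \emph{no} zeroth–order term; since $\phi>0$ the boundary hypotheses pass over to $\underline v,\overline v$. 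Repeating the previous step for $\tilde w:=\underline v-\overline v$ against $(\tilde w-\eps)^+$ now produces $\int\phi^2|\nabla(\tilde w-\eps)^+|^2+\int\phi^{q+1}\big(|\underline v|^{q-1}\underline v-|\overline v|^{q-1}\overline v\big)(\tilde w-\eps)^+\le0$, both integrands being $\ge0$ on $\{\tilde w>\eps\}$. Hence $(\tilde w-\eps)^+\equiv0$ for every $\eps>0$, so $\tilde w\le0$ and therefore $\usub\le\usup$.

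The only delicate point is thus the Hardy potential: for $\mu>0$ the linear part lacks the maximum principle, so the comparison must be routed through a positive $\L_\mu$-superharmonic weight, which converts \eqref{*} into a coercive divergence–form problem with a monotone absorption term; the remaining ingredients — admissibility of $(w-\eps)^+$ (which uses the $H^1$/$H^1_{\loc}$ regularity together with the boundary hypotheses to secure compact support) and the nonnegativity of the absorption contribution — are routine.
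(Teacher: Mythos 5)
You are testing against a lemma the paper itself does not prove: it is quoted verbatim from \cite{BMR08}*{Lemma 3.2}, so your argument has to stand on its own. Its skeleton (truncated test function $(w-\eps)^+$, compact support of $\{w\ge\eps\}$ from the boundary hypotheses, monotonicity of $s\mapsto|s|^{q-1}s$) is fine, and the case $\mu\le 0$ is correct. The $\mu>0$ case, however, has two genuine gaps. First, a sign error: a positive $\L_\mu$-\emph{superharmonic} weight $\phi$ transfers the \emph{sub}solution but not the \emph{super}solution. Writing $\usup=\phi\,\overline v$ gives
\begin{equation*}
\phi\,\overline v\,(-\L_\mu\phi)-\operatorname{div}(\phi^2\nabla\overline v)+\phi^{\,q+1}|\overline v|^{q-1}\overline v\;\ge\;0,
\end{equation*}
and since $\phi\,\overline v\,(-\L_\mu\phi)\ge0$ you cannot discard it and keep the inequality ``$\ge0$''; that step is legitimate only if $\phi$ is $\L_\mu$-harmonic. (This particular defect is repairable: keep the term, and note that after subtracting and testing with $(\underline v-\overline v-\eps)^+$ it enters with the factor $(\underline v-\overline v)\ge\eps>0$ on the support, hence with a good sign — modulo regularity issues if $\phi$ is merely superharmonic.)

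The second gap is structural and is where the method breaks. The lemma is stated for arbitrary open $G\subset\Gw$ (resp.\ $\overline G\subset\Gw$), and it is used in this paper precisely in the regime $C_H(\Gw)<\mu<1/4$ with $G$ essentially all of $\Gw$ (Appendix A, and Lemma \ref{l-subsup}). There your weight does not exist: by Lemma \ref{t-GST} there is no positive $\L_\mu$-superharmonic function in $\Gw$ when $\mu>C_H(\Gw)$, and one can even choose $G\Subset\Gw$ carrying a $u\in C_c^\infty(G)$ with $\int|\nabla u|^2<\mu\int u^2/\gd_\Gw^2$, so that no positive $\L_\mu$-supersolution exists on $G$ either (its existence would force the form to be nonnegative on $C_c^\infty(G)$). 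So your argument only covers strips or domains where $\mu$ is subcritical for the local Hardy constant. Nor is this a removable technicality of the method: on such a $G$ the principal eigenvalue of $-\L_\mu$ is negative, the linear part has no maximum principle (a small multiple of the principal eigenfunction is a positive subsolution with zero boundary data), so any proof that handles the Hardy term only through a positivity/weight property of the linear operator cannot close; the nonnegativity/positivity of $\usub,\usup$ must be used in an essential, nonlinear way. The standard route — and the reason the result is imported from \cite{BMR08} — is the Picone/Brezis--Oswald device: test the inequality for $\usub$ with $(\usub^2-\usup^2)^+/\usub$ and the one for $\usup$ with $(\usub^2-\usup^2)^+/\usup$ (suitably regularized where $\usup$ may vanish); then the Hardy term cancels identically whatever the size of $\mu$, the gradient terms are nonnegative by Picone's inequality, and the absorption term is nonnegative by monotonicity. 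That argument needs no superharmonic weight and works on every $G$, which is exactly what your approach cannot deliver.
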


\begin{lemma}\label{l-subsup}
{\sc (\cite{BMR08}*{Lemma 4.10})}
Assume that \eqref{*} admits a subsolution $\usub$ and a supersolution $\usup$ in $\Omega$
so that $0\le\usub\le\usup$ in $\Omega$. Then \eqref{*} has a solution $U$ in $\Omega$
such that $\usub\le U\le\usup$ in $\Omega$.
\end{lemma}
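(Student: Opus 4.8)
The plan is to reduce the assertion to the classical sub/supersolution (monotone iteration) method on an exhausting sequence of subdomains on which the Hardy potential $\tfrac{\mu}{\gd^2}$ is bounded, and then to pass to the limit. First I would fix an increasing sequence of smooth subdomains $D_1,D_2,\dots$ with $\ovl{D_n}\sbs D_{n+1}$ and $\bigcup_n D_n=\Gw$ — for instance $D_n=\{x\in\Gw:\gd(x)>1/n\}$ for $n$ large, which is smooth since $\Gw$ is. On $\ovl{D_n}$ the coefficient $\tfrac{\mu}{\gd^2}$ is bounded, and, since $\ovl{D_n}\sbs\Gw$, the functions $\usub,\usup$ lie in $H^1(D_n)\cap C(\ovl{D_n})$ and form an ordered pair of a sub- and a supersolution of \eqref{*} in $D_n$.

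Next, on each bounded smooth domain $D_n$ with bounded coefficients I would run the standard monotone iteration: choose $K_n>0$ so large that $s\mapsto K_ns+\tfrac{\mu}{\gd(x)^2}s-s^q$ is nondecreasing on $[0,\sup_{\ovl{D_n}}\usup]$ for a.e.\ $x\in D_n$, set $w_n^0=\usub$, and let $w_n^{k+1}$ solve the linear Dirichlet problem $-\Delta w_n^{k+1}+K_nw_n^{k+1}=K_nw_n^k+\tfrac{\mu}{\gd^2}w_n^k-(w_n^k)^q$ in $D_n$ with $w_n^{k+1}=\usub$ on $\prt D_n$. Using the maximum principle for $-\Delta+K_n$ together with the sub/supersolution inequalities for $\usub$ and $\usup$, one checks inductively that $\usub=w_n^0\le w_n^1\le w_n^2\le\cdots\le\usup$ in $D_n$; the monotone limit $w_n$ is then a solution of \eqref{*} in $D_n$, belongs to $H^1(D_n)\cap C(\ovl{D_n})$ (continuity up to $\prt D_n$ by a standard barrier, as $\prt D_n$ is smooth and the data continuous), equals $\usub$ on $\prt D_n$, and satisfies $\usub\le w_n\le\usup$ in $D_n$.

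Then I would show that $(w_n)$ is nondecreasing on each fixed $D_n$ and pass to the limit. For $m>n$, both $w_m$ and $w_n$ solve \eqref{*} in $D_n$ and lie in $H^1(D_n)\cap C(\ovl{D_n})$ (for $w_m$ this is just interior regularity, since $\ovl{D_n}$ is compactly contained in $D_m$), while on $\prt D_n$ we have $w_m\ge\usub=w_n$ because $\usub\le w_m$ throughout $D_m\supset\ovl{D_n}$. Hence Lemma~\ref{comparison}(ii) gives $w_n\le w_m$ in $D_n$. Therefore $U(x):=\lim_n w_n(x)=\sup_n w_n(x)$ is well defined on $\Gw$ with $\usub\le U\le\usup$. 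Finally, on any ball $B$ with $\ovl B\sbs\Gw$ the $w_n$ (for $n$ large) are bounded by $\sup_B\usup$, hence so are $\tfrac{\mu}{\gd^2}w_n$ and $w_n^q$ on $B$; interior elliptic estimates then bound $w_n$ in $C^{1,\ga}$ on smaller balls, so $w_n\to U$ in $C^1_{\loc}(\Gw)$, $U$ satisfies \eqref{*} in the distribution sense, and hence classically by the regularity recalled at the start of Section~\ref{sect-3}. Together with $\usub\le U\le\usup$ this is the claim.

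The main obstacle is exactly the singularity of $\tfrac{\mu}{\gd^2}$ on $\prt\Gw$: it prevents applying the monotone iteration directly on $\Gw$ and forces the detour through the subdomains $D_n$, where the coefficients are bounded. Once that device is set up, the decisive point that makes the limiting step work is that $U$ is trapped between the given sub- and supersolution, which supplies the uniform interior bounds needed to pass to the limit in the equation; the monotonicity in $n$ (via Lemma~\ref{comparison}(ii)) is what guarantees that the limit $U$ exists pointwise rather than merely along a subsequence.
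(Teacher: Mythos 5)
The paper itself gives no proof of this lemma---it is quoted from \cite{BMR08}*{Lemma 4.10}---and your argument (exhaustion by subdomains $D_n$ on which $\mu\gd^{-2}$ is bounded, monotone iteration between $\usub$ and $\usup$ with boundary data $\usub$, comparison via Lemma \ref{comparison}(ii) to obtain monotonicity in $n$, and interior elliptic estimates to pass to the limit) is precisely the standard proof of that result; I find it correct. The one point you should make explicit is the regularity class: the claims that $\usub,\usup\in H^1(D_n)\cap C(\ovl{D_n})$ ``since $\ovl{D_n}\sbs\Gw$'', that $\sup_{\ovl{D_n}}\usup<\infty$ (needed both to choose $K_n$ and to compare the iterates with $\usup$), and that $\sup_B\usup<\infty$ in the limiting step all presuppose $\usub,\usup\in H^1_{\loc}(\Gw)\cap C(\Gw)$, which is the class in which \cite{BMR08} and the quoted comparison principle Lemma \ref{comparison} operate, but which is not implied by the weaker distributional notion recalled at the start of Section \ref{sect-3} (that only gives $W^{1,p}_{\loc}$, $p<N/(N-1)$, and no continuity or local boundedness of $\usup$). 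If one insists on that weaker notion, two steps need repair: the uniform interior bounds should come from the local Keller--Osserman estimate (Lemma \ref{OK}, which bounds the subsolutions $w_n$ locally, independently of $\usup$) rather than from $\sup_B\usup$, and in the iteration on $D_n$ one should first replace $\usup$ by, say, $\min(\usup,V_n)$ with $V_n$ a large bounded supersolution (the minimum of two supersolutions being again a supersolution), since monotonicity of $s\mapsto K_ns+\mu\gd^{-2}s-s^q$ cannot hold on an unbounded range; with these routine modifications your scheme goes through unchanged.
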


In \cite{BMR08}*{Proposition 3.5} the Keller--Osserman estimate has been extended to equation \eqref{*}. Specifically it was proved that every subsolution $u$ of \eqref{*} in $\Gw$ satisfies,
\begin{equation}\label{KO}
u(x)\le \gamma_\ast\delta^{-\frac{2}{q-1}}(x)\quad\mbox{in }\:\Omega,
\end{equation}
where $\gamma_\ast$ is a constant independent of $u$. In addition it was shown that, if $u$ is a local subsolution  in  $\Gw_\rho$, continuous at $\Gs_\rho$, then $u$ satisfies \eqref{KO} in $\Gw_\rho$, but $\gg_\ast$ may depend on $u$.
We prove below a stronger version that is used later on.

\begin{lemma}{\sc (Keller--Osserman estimate)}\label{OK}
If $u$ is a subsolution of \eqref{*} in $\Gw$ then it satisfies \eqref{KO} with a constant depending only on $q,N,\mu$. If $u$ is a subsolution of \eqref{*} in $\Gw_\rho$ then \eqref{KO} holds with a constant depending only on $q,N,\mu,\rho$ and $\gd(x)$ replaced by
$\gd_\rho(x):=\dist(x,\bdw_\rho)$.
\end{lemma}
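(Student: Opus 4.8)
The plan is to prove the Keller--Osserman bound by a standard localization/scaling argument, reducing the case with a general Hardy potential to a comparison with a supersolution on small balls.

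\textbf{Setup and reduction.} First I would recall from \cite{BMR08}*{Proposition 3.5} that a global subsolution $u$ of \eqref{*} already satisfies \eqref{KO} with a constant depending only on $q,N,\mu$ (this part is already known; the improvement below concerns the local case, where I want to upgrade the constant from ``depending on $u$'' to ``depending only on $q,N,\mu,\rho$''). So the substantive claim is the second one. Fix a subsolution $u$ of \eqref{*} in $\Gw_\rho$ and a point $x_0\in\Gw_\rho$ with $d:=\gd_\rho(x_0)=\dist(x_0,\bdw_\rho)$. Set $B=B_{d/2}(x_0)\subset\Gw_\rho$. On $B$ the Hardy potential $\mu/\gd_\Gw^2$ is a \emph{bounded} coefficient: indeed for $x\in B$ we have $\gd_\Gw(x)\ge \gd_\Gw(x_0)-d/2 \ge (\rho - \gd_\rho(x_0)) - d/2$; more to the point, $\gd_\Gw(x)$ is bounded below on $B$ by a quantity comparable to $\gd_\rho(x_0)$ up to constants depending only on $\rho$, so $|\mu|/\gd_\Gw^2 \le K$ on $B$ with $K=K(\mu,\rho,d)$. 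Hence on $B$, $u$ is a subsolution of $-\Delta u - Ku + u^q\le 0$ in the sense of \eqref{weak} with the sign appropriate to a subsolution (after absorbing the sign of $\mu$).

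\textbf{Comparison with an explicit supersolution.} The key step is to construct, on a ball $B_R(x_0)$ of radius $R$ (to be chosen as $d/2$), an explicit radial supersolution $\Psi$ of $-\Delta v - Kv + v^q = 0$ that blows up on $\partial B_R$. One takes the \OK ansatz $\Psi(x) = A\,(R^2 - |x-x_0|^2)^{-\gamma}$ with $\gamma = \tfrac{2}{q-1}$; a direct computation shows that for $A$ large enough, depending only on $q,N,R$ (to beat the Laplacian term) and on $K$ and $R$ (to absorb the linear term $-Kv$, which is lower order relative to $v^q$ since $q>1$), $\Psi$ is a supersolution on $B_R(x_0)$. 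Since $\Psi\to\infty$ at $\partial B_R$ while $u$ is finite (being $L^q_{\loc}$ and hence, by the regularity remark, continuous) on the compact subsets, we get $\limsup_{x\to\partial B_R}[u(x)-\Psi(x)]<0$. Applying the comparison principle, Lemma \ref{comparison}(i), on $G=B_R(x_0)$ (noting $u\in H^1_{\loc}\cap C$ there by the preliminaries and $\Psi$ smooth), yields $u\le\Psi$ on $B_R(x_0)$. Evaluating at the center: $u(x_0)\le \Psi(x_0)=A R^{-2\gamma} = A (d/2)^{-2/(q-1)}$. Tracking the dependence of $A$ through the computation, $A$ depends only on $q,N,\mu,\rho$ (since $K$ did), which gives $u(x_0)\le \gamma_\ast \gd_\rho(x_0)^{-2/(q-1)}$ with $\gamma_\ast=\gamma_\ast(q,N,\mu,\rho)$. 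As $x_0$ was arbitrary this is \eqref{KO} with $\gd$ replaced by $\gd_\rho$. For the global statement one runs the same argument with $\Gw_\rho$ replaced by $\Gw$; there $\gd_\Gw$ itself is the relevant distance and the estimate $|\mu|/\gd_\Gw^2\le K$ on $B_{d/2}(x_0)$ with $d=\gd_\Gw(x_0)$ gives $K\sim |\mu| d^{-2}$, i.e. $K R^2$ is a \emph{universal} constant, so $A$ and hence $\gamma_\ast$ depend only on $q,N,\mu$ — recovering \eqref{KO} in $\Gw$.

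\textbf{Main obstacle.} The only delicate point is the bookkeeping in the supersolution construction: one must verify that the linear term $-Kv$ can genuinely be absorbed into $v^q$ on the whole ball, which works because near $\partial B_R$ the dominant balance is $\Delta\Psi\sim\Psi^q$ (both of order $(R^2-|x-x_0|^2)^{-\gamma-2}$), whereas $K\Psi$ is only of order $(R^2-|x-x_0|^2)^{-\gamma}$, hence lower order; away from $\partial B_R$ all terms are bounded and one just needs $A$ large. Making this quantitative — i.e. pinning down exactly how $A$ depends on $K$ and $R$, and then on $\rho$ through the lower bound $\gd_\Gw \gtrsim_\rho \gd_\rho$ on $B_{d/2}(x_0)$ — is the one computation that has to be done carefully, but it is routine. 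A secondary, essentially bureaucratic, point is checking the hypotheses of Lemma \ref{comparison}(i): $u$ is continuous and in $H^1_{\loc}$ on $B_R(x_0)$ by the regularity statement in the Preliminaries, and $\Psi$ is smooth up to the interior, so the comparison applies verbatim.
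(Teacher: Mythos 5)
Your route is genuinely different from the paper's. The paper does not build a barrier at all: it thresholds $u$, noting that on $B_{\gd(y)/2}(y)$ either $u\le (8\mu/R^2)^{1/(q-1)}$ or the Hardy term is absorbed into $u^q/2$, applies Kato's inequality to $v=(u-(8\mu/R^2)^{1/(q-1)})_+$ to get $-\Gd v+v^q/2\le 0$, and then simply quotes the \emph{classical} Keller--Osserman inequality (valid for distributional subsolutions), repeating the argument near $\Gs_\rho$ for the local case. Your argument instead freezes the potential on $B_{\gd_\rho(x_0)/2}(x_0)$ (correctly: $\gd_\Gw\ge\gd_\rho(x_0)/2$ there, so $K\le 4|\mu|/\gd_\rho(x_0)^2$ and $KR^2=|\mu|$; the intermediate inequality $\gd_\Gw(x_0)\ge\rho-\gd_\rho(x_0)$ you wrote is false but also unnecessary) and compares with the explicit blow-up supersolution $A(R^2-|x-x_0|^2)^{-2/(q-1)}$. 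This is a legitimate self-contained alternative, and it in fact yields a constant depending only on $q,N,\mu$ even in the local case, which is stronger than what the lemma asserts.

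Two points need repair. First, the regularity needed for Lemma \ref{comparison}(i): the Preliminaries only give $u\in W^{1,p}_{\loc}$, $p<N/(N-1)$, for an $L^q_{\loc}$ subsolution, and continuity only for \emph{solutions}; your claim that $u$ is continuous ``by the regularity remark'' is not justified, so the comparison lemma cannot be invoked as stated (nor is $\limsup_{x\to\prt B_R}[u-\Psi]<0$ even pointwise meaningful for a merely $L^q_{\loc}$ function). The paper's proof avoids exactly this by reducing to the classical KO inequality for distributional subsolutions; you would need either to assume the extra regularity, or to replace the comparison step by a weak-form argument (Kato-type, or the classical KO statement applied after absorbing the frozen potential). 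Second, the constant bookkeeping is internally inconsistent as written: the supersolution computation forces $A^{q-1}\gtrsim R^2(1+KR^2)$, i.e.\ $A\sim c(q,N,\mu)R^{2/(q-1)}$, which \emph{does} depend on the point through $R=\gd_\rho(x_0)/2$; if $A$ were independent of $R$, then $\Psi(x_0)=AR^{-2\gamma}$ would give the wrong power $\gd_\rho(x_0)^{-4/(q-1)}$. Once the $R$-dependence of $A$ is tracked, $\Psi(x_0)\sim c(q,N,\mu)\,\gd_\rho(x_0)^{-2/(q-1)}$ and the stated estimate follows.
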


\proof Without loss of generality we may assume that $u\geq0$ because $u_+$  is a subsolution. If $\mu\leq0$ then $u$ is also a subsolution of the equation $-\Gd u+ u^q=0$. Therefore in this case \eqref{KO} is a direct consequence of the classical Keller--Osserman inequality.

Now assume that $\mu>0$. Let $y\in \Gw$ and $R=\gd(y)/2$. Then,
$$-\Gd u -\frac{\mu}{R^2} u +u^q\leq 0 \quad\text{in }B_R(y).$$
Therefore in $B_R(y)$ either $u\leq (8\mu/R^2)^{\frac{1}{q-1}}$ or $ -\Gd u + u^q/2\leq 0$.
Hence, by Kato's inequality, the function $v:=(u- (8\mu/R^2)^{\frac{1}{q-1}})_+$ satisfies
$$-\Gd v +v^q/2\leq 0\quad \text{in }B_R(y).$$
By the classical Keller--Osserman inequality,
$$v(y)\leq c(q,N) R^{-\frac{2}{q-1}}.$$
Since $u(y)\leq v(y) + (8\mu/R^2)^{\frac{1}{q-1}}$ we conclude that
\begin{equation}\label{KO1}
u(y)\leq c(\mu,q,N) \gd_\Gw(y)^{-\frac{2}{q-1}} \forevery y\in \Gw.
\end{equation}

Next, let $u$ be a subsolution in $\Gw_\rho$. As before we may assume that $u\geq 0$ and that $\mu>0$.
By the first part of the proof, \eqref{KO1} holds in $\Gw_{3\rho/4}$.  Further,
 $$-\Gd u -(4\mu/\rho^2)u +u^q\leq 0\quad\text{in }\Gw'_\rho=\{x:\, \rho/2\leq\gd(x)<\rho\}.$$
Therefore,
either $u\leq (8\mu/\rho^2)^{\frac{1}{q-1}}$ or $ -\Gd u + u^q/2\leq 0$.
By the same argument as before, the function $v:=(u- (8\mu/\rho^2)^{\frac{1}{q-1}})_+$ satisfies
$$v(x)\leq c(q,N) \dist(x,\Gs_\rho)^{-\frac{2}{q-1}} \forevery x: 3\rho/4\leq \gd(x)<\rho.$$
Consequently,
\begin{equation}\label{KO2}
u(x)\leq c(\mu,q,N,\rho) \dist(x,\bdw_\rho)^{-\frac{2}{q-1}} \forevery x\in \Gw_{\rho}.\qedhere
\end{equation}

\subsection{Moderate solutions.}

We study the generalised boundary trace problem \eqref{*nu}
where $\mu< 1/4$, $q>1$ and $\nu\in \mathcal M^+(\partial\Omega)$.
First we prove,

\begin{lemma}\label{interior-bvp} Let $D$ be a $C^2$ domain \sth $D\Subset \Gw$. If $0\le f\in C(\prt D)$ then there exists a unique solution of the problem
\begin{equation}\label{int-bvp}\left\{\BAL
-\L_\mu u+u^q&=0 \quad\text{in }D,\\
u&=f  \quad\text{on }\prt D.
\EAL\right.
\end{equation}
\end{lemma}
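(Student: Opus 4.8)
The plan is to solve \eqref{int-bvp} by the standard sub/supersolution method (Lemma~\ref{l-subsup} adapted to the domain $D$, together with the comparison principle Lemma~\ref{comparison}(ii)), after first disposing of the fact that the Hardy potential $\mu/\gd_\Gw^2$ is a bounded, smooth coefficient on the \emph{interior} domain $D$. Since $D\Subset\Gw$ we have $\gd_\Gw(x)\ge\dist(D,\bdw)=:d_0>0$ on $\bar D$, so $V(x):=\mu/\gd_\Gw(x)^2$ belongs to $C^\infty(\bar D)\cap L^\infty(D)$ and $|V|\le|\mu|/d_0^2$ on $D$. Thus on $D$ the operator $\L_\mu=\Gd+V$ is a uniformly elliptic operator with bounded smooth lower-order (zeroth-order) coefficient, and all the subtlety coming from the Hardy singularity disappears; \eqref{int-bvp} becomes the classical semilinear Dirichlet problem $-\Gd u - Vu + u^q=0$ in $D$, $u=f$ on $\prt D$, with $0\le f\in C(\prt D)$ and $D$ a bounded $C^2$ domain.

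For \textbf{existence}, first I would produce an ordered pair of sub- and supersolutions in $C(\bar D)\cap H^1(D)$. The function $\usub\equiv 0$ is a subsolution since $-\L_\mu 0 + 0 = 0\le 0$ and $\usub=0\le f$ on $\prt D$. For a supersolution, let $h$ be the $\L_\mu$-harmonic function in $D$ (equivalently, the solution of the linear problem $-\Gd w - Vw=0$ in $D$, $w=f$ on $\prt D$), which exists and is nonnegative by the maximum principle for $-\Gd-V$ on $D$ once $d_0$ is small relative to nothing — there is no smallness needed here because $V$ is merely a bounded potential and we may, if necessary, first solve $-\Gd w + \Vert V\Vert_\infty w = 0$ and iterate, or simply invoke standard linear elliptic theory since $f\ge0$ gives $h\ge0$; then $\usup:=h$ satisfies $-\L_\mu h + h^q = h^q\ge 0$ and $\usup=f$ on $\prt D$, so $\usup$ is a supersolution, $\usup\in C(\bar D)$ by elliptic regularity (boundary regularity uses $D\in C^2$, $f\in C$), and $0=\usub\le\usup$ on $\bar D$. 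Applying the interior version of Lemma~\ref{l-subsup} on $D$ (whose proof is purely local and carries over verbatim with $\Gw$ replaced by $D$, using that $V$ is bounded on $D$) yields a solution $u$ of \eqref{int-bvp} with $0\le u\le h$ in $D$; elliptic regularity upgrades $u$ to a classical solution continuous up to $\prt D$ with $u=f$ there.

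For \textbf{uniqueness}, suppose $u_1,u_2$ are two solutions. By the Keller--Osserman bound (Lemma~\ref{OK}, interior version) applied in $D$, or more simply by the supersolution comparison above, each $u_i\in C(\bar D)\cap H^1(D)$ and $u_i=f$ on $\prt D$; hence $u_1\le u_2$ on $\prt D$ and $u_2\le u_1$ on $\prt D$. Applying Lemma~\ref{comparison}(ii) twice (with $\bar D\Subset\Gw$, $\usub=u_1$, $\usup=u_2$, then with the roles reversed) gives $u_1\le u_2$ and $u_2\le u_1$ in $D$, so $u_1=u_2$. This completes the argument.

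The step I expect to be the main (though modest) obstacle is purely bookkeeping: verifying that the machinery of \cite{BMR08} quoted in Lemma~\ref{l-subsup} and Lemma~\ref{comparison}, which is stated for the full domain $\Gw$ with the genuinely singular potential $\mu/\gd_\Gw^2$, applies on the subdomain $D$ where the potential is bounded — i.e. checking that the sub/supersolutions are in the right function spaces ($H^1(D)\cap C(\bar D)$), that $\usup=h$ has the claimed boundary regularity for merely continuous data $f$ on a $C^2$ boundary, and that the iteration/fixed-point scheme behind Lemma~\ref{l-subsup} converges monotonically between $0$ and $h$. None of this requires new ideas; it is the routine localisation of results already in hand.
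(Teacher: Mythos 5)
Your reduction to a bounded potential on $D$ and your uniqueness argument (comparison principle) match the paper, but the existence part has a genuine gap: the supersolution you propose need not exist. You take $\usup=h$, the solution of the linear problem $-\L_\mu h=0$ in $D$, $h=f$ on $\prt D$, and assert that boundedness of $V=\mu\,\gd_\Gw^{-2}$ on $D$ together with ``standard linear elliptic theory / the maximum principle'' yields a nonnegative such $h$, with ``no smallness needed''. This is false when $\mu>0$ and $D$ is large: solvability together with positivity requires the principal Dirichlet eigenvalue of $-\Gd-V$ in $D$ to be positive, i.e.\ positivity of the form $\int_D|\nabla\vgf|^2-\mu\int_D\vgf^2\gd_\Gw^{-2}$ on $C_c^\infty(D)$. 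In the range $C_H(\Gw)<\mu<1/4$ this fails as soon as $D$ contains the support of a test function witnessing $C_H(\Gw)<\mu$; for such $D$ there is no positive $\L_\mu$-superharmonic function in $D$ at all (cf.\ Lemma \ref{t-GST} and the Allegretto--Piepenbrink-type characterization), the maximum principle fails, the solution of the linear Dirichlet problem (when it exists) can change sign, and the problem may even be unsolvable if $0$ is an eigenvalue. This is not a marginal case: in Theorem \ref{Gw-ex} and in Appendix \ref{App-B} the lemma is applied to $D=D_R$ with $R\to 0$, i.e.\ precisely to large subdomains, and Appendix \ref{App-B} works exactly in the regime $C_H(\Gw)<\mu<1/4$. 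So your supersolution construction collapses exactly where the lemma is most needed.

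The gap is repairable within your scheme: thanks to the absorption term $u^q$, any constant $M\geq\max\bigl\{\max_{\prt D}f,\ \|V\|_{L^\infty(D)}^{1/(q-1)}\bigr\}$ is a supersolution, since $-\L_\mu M+M^q=-VM+M^q\ge 0$, and then monotone iteration with boundary data $f$ between $\usub=0$ and $\usup=M$ gives existence; note also that squeezing $0\le u\le\usup$ does not by itself force $u=f$ on $\prt D$ (the lower bound gives no information at the boundary), so you genuinely need the iteration with boundary data $f$, or barriers, to attain the data. The paper avoids sub/supersolutions altogether: it minimizes $J_D(u)=\int_D\bigl(\tfrac12|\nabla u|^2-\tfrac{\mu}{2\gd_\Gw^2}u^2+\tfrac1{q+1}|u|^{q+1}\bigr)dx$ over $H^1_f(D)$, where coercivity and weak lower semicontinuity hold because the $|u|^{q+1}$ term absorbs the bounded, possibly sign-indefinite quadratic term; the minimizer (replaced by its absolute value) solves \eqref{int-bvp}, and uniqueness is then obtained from the comparison principle, as in your write-up.
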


\proof
For $u\in H^1(D)$, let
$$J_D(u)= \int_{D}\big( \frac{1}{2}|\nabla u|^2 - \frac{\mu}{2\gd_{\Omega}^2}u^2 + \frac{1}{q+1}|u|^{q+1}\big)dx.$$
Since $\mu\delta_\Omega^{-2}\in L^\infty(D)$, it is standard to see that
$J_D$ is coercive and weakly l.s.c. on $$H_f^1(D)=\{u\in H^1(D): u=f \;\text{on }\partial D\}.$$
Therefore there exists  a minimizer $u_f\in H^1_f(D)$. We may assume that $u_f>0$ because $|u_f|$ too is a minimizer. The minimizer is a solution of \eqref{int-bvp}. The uniqueness is a consequence of the comparison principle.
\qed
\medskip

Next consider the problem,
\begin{equation*}\tag{$P_{\mu}^{\nu}(\rho)$}\label{*nur}
\left\{
\begin{array}{rcll}
-\L_\mu u+u^q&=&0\quad\text{in $\Gw_\rho$},\smallskip\\
\tr_{\bdw}(u)&=&\nu\chr{\bdw}=:\nu_0,\\
\trm_{\Gs_\rho}(u)&=& \nu\chr{\Gs_\rho}=:\nu_\rho.
\end{array}
\right.
\end{equation*}
where $\mu< 1/4$, $q>1$, $\nu\in \mathcal M^+(\prt \Gw_\rho)$ and $\rho\in (0,\bar\rho]$.

The following result is an adaptation of \cite{MMN}*{Theorem C} to problem \eqref{*nur}. Since $C_H(\Gw_{\bar\rho})=1/4$ the result applies to every  $\mu<1/4$. The proof follows the argument in \cite{MMN}; for the convenience of the reader it is presented below.

\begin{proposition}\label{nl-existence}
Let $\nu\in\M^+(\bdw_\rho)$ and assume that $\K_\mu^{\Omega_\rho}[\nu]\in L^q_{\delta^{\alpha_+}}(\Omega_\rho)$
for some $\rho\in(0,\bar\rho]$.
Then \eqref{*nur}  admits a unique solution $U_\nu$.
\end{proposition}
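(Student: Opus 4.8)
The plan is to obtain the solution of \eqref{*nur} by the sub/supersolution method (Lemma~\ref{l-subsup}), working in the domain $\Gw_\rho$ rather than $\Gw$. The natural supersolution is $\bar u:=\K_\mu^{\Gw_\rho}[\nu]$ itself: since $-\L_\mu\bar u=0$ and $\bar u\ge 0$, it is a supersolution of \eqref{*} in $\Gw_\rho$, and by hypothesis $\bar u\in L^q_{\gd^{\ga_+}}(\Gw_\rho)$, which is exactly the integrability needed to make the associated potential finite. The natural subsolution is $\usub\equiv 0$. However, Lemma~\ref{l-subsup} is stated for the whole domain $\Gw$ and yields a solution $U$ with $0\le U\le\bar u$; here I would need its local analogue in $\Gw_\rho$, which follows by the same monotone-iteration argument since the comparison principle (Lemma~\ref{comparison}) and the interior Dirichlet problem (Lemma~\ref{interior-bvp}) are available on $C^2$ subdomains exhausting $\Gw_\rho$. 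So first I would exhaust $\Gw_\rho$ by smooth domains $D_n\Subset\Gw_\rho$, solve the Dirichlet problem on each $D_n$ with boundary data $\min(\bar u,n)\restriction_{\prt D_n}$ (using Lemma~\ref{interior-bvp}), and pass to the limit using the Keller--Osserman estimate (Lemma~\ref{OK}) for interior compactness and monotonicity to get a solution $U_\nu$ of \eqref{*} in $\Gw_\rho$ with $0\le U_\nu\le\bar u$.

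The second, and genuinely substantive, step is to verify that $U_\nu$ attains the prescribed trace, i.e.\ $\tr_{\bdw}(U_\nu)=\nu_0$ and $\trm_{\Gs_\rho}(U_\nu)=\nu_\rho$. The clean way is to control $\bar u - U_\nu$ from above by an $\L_\mu$-potential. Writing $w:=\bar u-U_\nu\ge0$, one has $-\L_\mu w = -U_\nu^q \le 0$, and since $w$ is dominated by the $\L_\mu$-harmonic function $\bar u$, Theorem~\ref{tr-sub} gives $\L_\mu w=\gl\in\M^+_{\gd^{\ga_+}}(\Gw_\rho)$ and a measure $\chi$ with $w=\K_\mu^{\Gw_\rho}[\chi]-\G_\mu^{\Gw_\rho}[\gl]$. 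To pin down $\chi$ I would use the construction: approximating $U_\nu$ by the Dirichlet solutions $U_n$ on $D_n$ and comparing $\bar u - U_n$ with $\G_\mu^{D_n}[U_n^q]$ on $D_n$ (both solve the same equation with the same boundary data on $\prt D_n$, namely $\bar u - \min(\bar u,n)\to 0$), one gets $\bar u - U_n = \G_\mu^{D_n}[U_n^q]$, and letting $n\to\infty$ yields $\bar u - U_\nu = \G_\mu^{\Gw_\rho}[U_\nu^q]$ with $U_\nu^q\in\M^+_{\gd^{\ga_+}}(\Gw_\rho)$ (finiteness is where the $L^q_{\gd^{\ga_+}}$ hypothesis is used, via $U_\nu^q\le\bar u^q$). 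Now $\G_\mu^{\Gw_\rho}[U_\nu^q]$ is an $\L_\mu$-potential, so by Theorem~\ref{key-res}(ii), \eqref{e-trace-potential}, its normalized boundary trace on $\bdw$ is $0$; combined with Theorem~\ref{key-res}(i) for $\bar u=\K_\mu^{\Gw_\rho}[\nu]$, this gives $\tr_{\bdw}(U_\nu)=\nu_0$. The trace on $\Gs_\rho$ is handled analogously using that the potential vanishes in measure on $\Gs_\rho$ (it is a classical $\L_\mu$-potential, hence its trace there is $0$), so $\trm_{\Gs_\rho}(U_\nu)=\trm_{\Gs_\rho}(\bar u)=\nu_\rho$.

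Finally, uniqueness. Suppose $U_1,U_2$ both solve \eqref{*nur}. Both are nonnegative $\L_\mu$-subharmonic functions in $\Gw_\rho$ (any nonnegative solution of \eqref{*} is $\L_\mu$-subharmonic since $-\L_\mu U_i=-U_i^q\le0$), and both are dominated by the $\L_\mu$-superharmonic function $\K_\mu^{\Gw_\rho}[\nu]$ — here I would use that $U_i\le\K_\mu^{\Gw_\rho}[\trm^\mu_{\bdw_\rho}(U_i)]=\K_\mu^{\Gw_\rho}[\nu]$ by \eqref{tr-sub1} of Theorem~\ref{c-tr-sub}. Then as in the proof of Lemma~\ref{tr-unique}, $w:=|U_1-U_2|$ is $\L_\mu$-subharmonic, dominated by $2\K_\mu^{\Gw_\rho}[\nu]$, and has $\trm^\mu_{\bdw_\rho}(w)=0$ (both components vanish since the traces of $U_1$ and $U_2$ agree); by Theorem~\ref{tr-sub}, $w+\G_\mu^{\Gw_\rho}[\gl]$ is $\L_\mu$-harmonic for some $\gl\in\M^+_{\gd^{\ga_+}}(\Gw_\rho)$, its normalized trace on $\bdw$ is $0$ by \eqref{e-trace-potential}, and its measure trace on $\Gs_\rho$ vanishes, forcing $w\equiv0$. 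The main obstacle I anticipate is the localization to $\Gw_\rho$: Lemmas~\ref{l-subsup} and the boundary-trace machinery (Theorem~\ref{tr-sub}, Theorem~\ref{key-res}) are all either stated for $\Gw$ or already adapted to $\Gw_\rho$ in the excerpt, so the real care is in the approximation argument that produces $\bar u - U_\nu = \G_\mu^{\Gw_\rho}[U_\nu^q]$ and in checking the two-sided trace on $\prt\Gw_\rho = \bdw\cup\Gs_\rho$ simultaneously; once that identity is in hand, everything else is an application of the linear results established above.
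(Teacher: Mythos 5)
Your existence argument is essentially the paper's: exhaust $\Gw_\rho$ by smooth domains $D_n$, solve the Dirichlet problems of Lemma \ref{interior-bvp} with data coming from $\bar u:=\K_\mu^{\Gw_\rho}[\nu]$, pass to the limit using $0\le U_n\le\bar u$, and identify the boundary behaviour through the identity $U_\nu+\G_\mu^{\Gw_\rho}[U_\nu^q]=\K_\mu^{\Gw_\rho}[\nu]$ together with Theorem \ref{key-res} (this is exactly \eqref{unDn} and its limit). One slip in your version: the truncation $\min(\bar u,n)$ of the boundary data is unnecessary ($\bar u$ is continuous, hence bounded, on each compact set $\prt D_n\sbs\Gw_\rho$) and, as written, it breaks your own argument: with truncated data, $\bar u-U_n$ and $\G_\mu^{D_n}[U_n^q]$ do \emph{not} have the same boundary values on $\prt D_n$ (the former has data $(\bar u-n)_+$, the latter $0$), so $\bar u-U_n=\G_\mu^{D_n}[U_n^q]$ fails and is replaced by an identity with the extra term $\BBP_\mu^{D_n}[(\bar u-n)_+]$, whose vanishing in the limit would need a separate proof; the monotonicity of $U_n$ is also no longer automatic. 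Taking $f_n=\bar u\lfloor_{\prt D_n}$, as the paper does, removes both issues.

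The genuine gap is in your uniqueness argument, at the claim that $\trm_{\Gs_\rho}(|U_1-U_2|)=0$ ``since the traces of $U_1$ and $U_2$ agree''. On $\bdw$ this is fine, because the normalized trace is defined via an $L^1$ distance to the same reference function $\K_\mu^{\Gw_\rho}[\nu_0]$, so the triangle inequality applies. On $\Gs_\rho$, however, the measure trace is a weak-$*$ limit, and equality of the weak-$*$ limits of $U_1\,dS$ and $U_2\,dS$ gives no control of $\int_{\Gs_a}|U_1-U_2|\,dS$; asserting it is essentially assuming the uniqueness you are trying to prove. The step can be repaired, for instance as follows: $v:=\max(U_1,U_2)$ is a nonnegative $\L_\mu$-subharmonic function with $U_1\le v\le\K_\mu^{\Gw_\rho}[\nu]$, so by Theorem \ref{c-tr-sub} it has a measure trace on $\Gs_\rho$ which is squeezed between $\nu\chr{\Gs_\rho}$ and $\nu\chr{\Gs_\rho}$, hence equals it, and then $|U_1-U_2|=2v-U_1-U_2$ has trace $0$ on $\Gs_\rho$ by linearity of the weak-$*$ limit; alternatively, first establish the representation $U_i+\G_\mu^{\Gw_\rho}[U_i^q]=\K_\mu^{\Gw_\rho}[\nu]$ for an \emph{arbitrary} solution (this is what the proof of Corollary \ref{PL-nu-converse} provides, and it is also what justifies your use of \eqref{tr-sub1}, which by itself presupposes that $U_i$ is dominated by an $\L_\mu$-superharmonic function in $\Gw_\rho$) and conclude as in Theorem C of \cite{MMN}. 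Note that the paper's displayed proof only treats existence and the trace identification, uniqueness being inherited from the argument of \cite{MMN}, so your attempt goes beyond it but needs this repair.
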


\proof Let $\{D_n\}$ be a sequence of $C^2$ domains \sth $\bar D_n\sbs D_{n+1}$ and $D_n\uparrow \Gw_\rho$. Let $u_n$ be the solution of \eqref{int-bvp} with $D=D_n$ and $f=f_n:=\K_\mu^{\Gw_\rho}[\nu]\lfloor_{\prt D_n}$. Since $\K_\mu^{\Gw_\rho}[\nu]$ is a supersolution of the equation $\L_\mu v+v^q=0$ in $\Gw_\rho$
it follows that $u_n$ decreases and $u=\lim u_n$ is a solution of this equation. We claim that $u$ is a solution of \eqref{*nur}.
Indeed,
\begin{equation}\label{unDn}
 u_n+\G^{D_n}_\mu[u_n^q]=\BBP^{D_n}_\mu[f_n]=\K_\mu^{\Gw_\rho}[\nu]\quad \text{in }D_n,
\end{equation}
where $\BBP^{D_n}_\mu$ denotes the Poisson kernel of $\L_\mu$ in $D_n$.

Since $u_n\leq \K_\mu^{\Gw_\rho}[\nu]\in L^q_{\gd^{\ga_+}}(\Gw)$ it follows that
$$\G^{D_n}_\mu[u_n^q]\to \G^{\Gw_\rho}_\mu[u^q].$$
Hence, by \eqref{unDn},
$$u+\G^{\Gw_\rho}_\mu[u^q]=\K_\mu^{\Gw_\rho}[\nu] \quad \text{in }\Gw_\rho.$$
By Theorem \ref{key-res}, $\tr_{\bdw} (u)=\nu\chr{\bdw}$ and  (by \eqref{Kest}) $\trm_{\Gs_\rho} (u)=\nu\chr{\Gs_\rho}$.
\qed
\medskip

The next result is  an adaptation of \cite{MMN}*{Theorem D}. We omit the proof which -- except for obvious modifications -- is the same as in \cite{MMN}.

\begin{proposition}\label{nl-asymptotics} Assume that $u$ is a positive solution of \eqref{*nur}. Then
\begin{equation}\label{e-nontang-moderate}
\lim_{x\to\partial\Omega}\frac{u(x)}{\K_\mu^{\Omega_\rho}[\nu_0](x)}=1
\quad\text{non-tangentially, $\nu$-a.e. on $\partial\Omega$},
\end{equation}
where $\nu_0=\nu\chr{\bdw}$.
\end{proposition}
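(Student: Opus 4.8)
The plan is to recognise \eqref{e-nontang-moderate} as a special case of \eqref{tr-sub2} in Theorem \ref{c-tr-sub}, applied to $w=u$. The preliminary observations are immediate: a positive solution $u$ of \eqref{*nur} is $\L_\mu$-subharmonic in $\Gw_\rho$, since $-\L_\mu u+u^q=0$ forces $\L_\mu u=u^q\ge0$ in the distribution sense; and by the very definition of \eqref{*nur}, $u$ has normalised boundary trace $\tr_{\bdw}(u)=\nu_0:=\nu\chr{\bdw}$ and measure boundary trace $\trm_{\Gs_\rho}(u)=\nu\chr{\Gs_\rho}$ on $\Gs_\rho$, so that $\trm^\mu_{\bdw_\rho}(u)=\nu$. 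Thus the only genuine hypothesis of Theorem \ref{c-tr-sub} that needs checking is that $u$ is dominated in all of $\Gw_\rho$ by a positive $\L_\mu$-superharmonic function.

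For this domination I would argue as follows. Since $u$ possesses a normalised boundary trace, Corollary \ref{PL-nu-converse} already provides a positive $\L_\mu$-superharmonic dominator of $u$ in $\Gw_{\rho_1}$ for some $\rho_1\in(0,\rho)$; this handles a strip along $\bdw$. Near the compact hypersurface $\Gs_\rho$, which sits in the interior of $\Gw$, the operator $\L_\mu$ is uniformly elliptic with bounded coefficients, and $u$ — continuous on $\Gs_{\rho_1/2}$ and carrying the finite measure trace $\nu\chr{\Gs_\rho}$ on $\Gs_\rho$ — is dominated on $\Gw_\rho\cap D_{\rho_1/2}$ by the $\L_\mu$-harmonic function with the corresponding boundary data; patching the two dominators (taking minima on the overlap, using that the minimum of two $\L_\mu$-superharmonic functions is $\L_\mu$-superharmonic) would produce a positive $\L_\mu$-superharmonic dominator of $u$ in all of $\Gw_\rho$. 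A cleaner alternative, which I might prefer, is to run the whole argument on $\Gw_{\rho_1}$ (where domination is automatic from Corollary \ref{PL-nu-converse}) and then transfer the conclusion back to $\Gw_\rho$, using that $\K_\mu^{\Gw_\rho}[\nu_0]\lfloor_{\Gw_{\rho_1}}$ and $\K_\mu^{\Gw_{\rho_1}}[\nu_0]$ are positive $\L_\mu$-harmonic functions in $\Gw_{\rho_1}$ with the same normalised trace $\nu_0$ on $\bdw$, hence their quotient tends non-tangentially to $1$ for $\nu_0$-a.e.\ point of $\bdw$.

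Once the domination is in hand, Theorem \ref{tr-sub} gives $\L_\mu u=u^q\in\M^+_{\gd^{\ga_+}}(\Gw_\rho)$ and a measure $\nu'$ with $u=\K_\mu^{\Gw_\rho}[\nu']-\G_\mu^{\Gw_\rho}[u^q]$; identifying traces via \eqref{e-trace-potential} and \eqref{Kest} shows $\nu'=\nu$. Then $\G_\mu^{\Gw_\rho}[u^q]$ is an $\L_\mu$-potential, so Ancona's boundary theory (see \cite{Ancona-90}) gives
\[
\lim_{x\to y}\frac{\G_\mu^{\Gw_\rho}[u^q](x)}{\K_\mu^{\Gw_\rho}[\nu](x)}=0\quad\text{non-tangentially, }\nu\text{-a.e.\ on }\bdw,
\]
and the Fatou limit theorem for quotients of positive $\L_\mu$-harmonic functions gives $\K_\mu^{\Gw_\rho}[\nu_0](x)/\K_\mu^{\Gw_\rho}[\nu](x)\to1$ non-tangentially for $\nu_0$-a.e.\ $y\in\bdw$ (the Radon--Nikodym derivative $d\nu_0/d\nu$ being $\chr{\bdw}\equiv1$ on $\bdw$). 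Combining these via
\[
\frac{u(x)}{\K_\mu^{\Gw_\rho}[\nu_0](x)}=\frac{\K_\mu^{\Gw_\rho}[\nu](x)}{\K_\mu^{\Gw_\rho}[\nu_0](x)}\Big(1-\frac{\G_\mu^{\Gw_\rho}[u^q](x)}{\K_\mu^{\Gw_\rho}[\nu](x)}\Big)
\]
and letting $x\to y$ non-tangentially yields \eqref{e-nontang-moderate}. The hard part is really the global domination step — everything after it is the machinery already assembled for Theorem \ref{c-tr-sub} — together with, on the technical side, keeping honest track of the two $\nu_0$-null exceptional sets in the Fatou-type statements borrowed from Ancona, which are the analytical heart of the argument.
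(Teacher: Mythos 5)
Your proposal is correct and follows essentially the same route as the paper, which simply invokes the argument of \cite{MMN}*{Theorem D}: write $u=\K_\mu^{\Omega_\rho}[\nu]-\G_\mu^{\Omega_\rho}[u^q]$ via Theorem \ref{tr-sub} and conclude from Ancona's potential estimate plus Fatou's theorem, i.e.\ exactly the mechanism of \eqref{tr-sub2} in Theorem \ref{c-tr-sub}. Note only that the global domination requires no patching of superharmonic dominators: since $\trm_{\Sigma_\rho}(u)=\nu\chr{\Sigma_\rho}$ is part of the data of \eqref{*nur}, the computation in the proof of Corollary \ref{PL-nu-converse} applies verbatim on all of $\Omega_\rho$ and yields $u\le\K_\mu^{\Omega_\rho}[\nu]$ there, cf.\ \eqref{u<Kmr}.
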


\begin{theorem}\label{Gw-ex}
Let $\nu\in\M^+(\bdw)$ and $\rho\in (0,\bar\rho)$. Let $\nu'\in\M^+(\bdw_\rho)$  be defined by $\nu'=\nu$ on $\bdw$ and $\nu'=0$ on $\Gs_\rho$. Assume that, for some $\rho$ as above, $\K_\mu^{\Omega_\rho}[\nu']\in L^q_{\delta^{\alpha_+}}(\Omega_\rho)$.
Then the boundary value problem \eqref{*nu}  admits a solution in $\Gw$.
\end{theorem}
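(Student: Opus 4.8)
\noindent\emph{Plan of proof.}
The plan is to realise the solution on $\Gw$ as an increasing limit of solutions of interior Dirichlet problems, using the solution already available on the strip $\Gw_\gr$ (from Proposition \ref{nl-existence}) simultaneously as boundary data, as a lower barrier, and — after a harmless local $\L_\mu$-harmonic correction — as an upper barrier that controls the normalised boundary trace. Since $\K_\mu^{\Gw_\gr}[\nu']\in L^q_{\gd^{\ga_+}}(\Gw_\gr)$, Proposition \ref{nl-existence} applied with the measure $\nu'$ on $\bdw_\gr$ produces the unique solution $U'=U_{\nu'}$ of \eqref{*nur}; it satisfies $0\le U'\le\K_\mu^{\Gw_\gr}[\nu']$, $\tr_{\bdw}(U')=\nu'\chr{\bdw}=\nu$, $\trm_{\Gs_\gr}(U')=0$, and, because $\K_\mu^{\Gw_\gr}[\nu']$ is $\L_\mu$-harmonic in $\Gw_\gr$ and (by \eqref{Kest}) vanishes continuously on $\Gs_\gr$, $U'$ extends by $0$ to a continuous function on $\ovl{\Gw_\gr}$. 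Fix $\gb_n\downarrow 0$ with $\gb_n<\tfrac12\min(\gr,\bar\gr)$; the sets $D_{\gb_n}=\{\gd>\gb_n\}$ are $C^2$ domains with $\ovl{D_{\gb_n}}\Subset\Gw$ and $D_{\gb_n}\uparrow\Gw$, so by Lemma \ref{interior-bvp} there is a unique $v_n$ solving \eqref{*} in $D_{\gb_n}$ with $v_n=U'$ on $\Gs_{\gb_n}$.

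The next step is two applications of the comparison principle (Lemma \ref{comparison}(ii)) on the relatively compact domains $\Gw_{\gb_{n+1},\gr}$ and $D_{\gb_n}$. On $\Gw_{\gb_{n+1},\gr}$ both $U'$ and $v_{n+1}$ solve \eqref{*}, they coincide on $\Gs_{\gb_{n+1}}$, and $U'=0\le v_{n+1}$ on $\Gs_\gr$; hence $U'\le v_{n+1}$ on $\Gw_{\gb_{n+1},\gr}$, and, symmetrically, $U'\le v_n$ on $\Gw_{\gb_n,\gr}$. Restricting the first inequality to $\Gs_{\gb_n}=\partial D_{\gb_n}$ gives $v_n=U'\le v_{n+1}$ there, hence $v_n\le v_{n+1}$ on $D_{\gb_n}$, so $\{v_n\}$ is increasing. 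For a bound that survives $\gb_n\to0$ I invoke the local form of Keller--Osserman contained in the proof of Lemma \ref{OK}: any subsolution $w$ of \eqref{*} on an open set $O\sbs\Gw$ satisfies $w(y)\le C_\ast\,\gd(y)^{-2/(q-1)}$ at every $y$ with $B_{\gd(y)/2}(y)\sbs O$, with $C_\ast=C_\ast(q,N,\mu)$. Applied with $O=D_{\gb_n}$ this gives $v_n(y)\le C_\ast\,\gd(y)^{-2/(q-1)}$ whenever $\gd(y)>2\gb_n$; in particular $\{v_n\}$ is locally uniformly bounded on $\Gw$. Monotonicity together with interior elliptic estimates then yields $v_n\uparrow U$ in $C^2_{\loc}(\Gw)$ with $U$ a nonnegative solution of \eqref{*} in $\Gw$, and passing to the limit in $U'\le v_n$ on $\Gw_{\gb_n,\gr}$ gives $U'\le U$ in $\Gw_\gr$.

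It remains to identify $\tr_{\bdw}(U)$. Put $C_\gr:=C_\ast\,\gr^{-2/(q-1)}$, so that (since $\gb_n<\gr/2$) $v_n\le C_\gr$ on $\Gs_\gr$ for all $n$. As $\mu<1/4=C_H(\Gw_\gr)$, there is a bounded $\L_\mu$-harmonic function $G$ in $\Gw_\gr$ with boundary values $0$ on $\bdw$ and $C_\gr$ on $\Gs_\gr$; then $0\le G\le C_\gr$, and $G$, being $\L_\mu$-harmonic with representing measure carried by $\Gs_\gr$, has $\tr_{\bdw}(G)=0$ (as in the proof of Lemma \ref{tr-unique}). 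Hence $H:=\K_\mu^{\Gw_\gr}[\nu']+G$ is a supersolution of \eqref{*} on $\Gw_\gr$ with $H\ge U'$, $H=C_\gr$ on $\Gs_\gr$ and $\tr_{\bdw}(H)=\nu$. Comparing $v_n$ with $H$ on $\Gw_{\gb_n,\gr}$ — on $\Gs_{\gb_n}$, $v_n=U'\le\K_\mu^{\Gw_\gr}[\nu']\le H$; on $\Gs_\gr$, $v_n\le C_\gr=G\le H$ — gives $v_n\le H$ there, hence $U\le H$ in $\Gw_\gr$. Thus $U'\le U\le H$ on $\Gw_\gr$ with $\tr_{\bdw}(U')=\tr_{\bdw}(H)=\nu$, and from the pointwise inequality $|U-\K_\mu^{\Gw_\gr}[\nu]|\le|U'-\K_\mu^{\Gw_\gr}[\nu]|+|H-\K_\mu^{\Gw_\gr}[\nu]|$, valid whenever $U'\le U\le H$, integration over $\Gs_\eps$ and division by $\eps^{\ga_-}$ force $\tr_{\bdw}(U)=\nu$. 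Therefore $U$ solves \eqref{*nu} (and $U>0$ by the strong maximum principle when $\nu\neq0$).

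The main obstacle is the uniform upper bound on $v_n$: the Dirichlet problems live on the \emph{shrinking} domains $D_{\gb_n}$, and the naive Keller--Osserman estimate on $D_{\gb_n}$ carries a constant that blows up as $\gb_n\to0$. The resolution is that Keller--Osserman is really a local estimate, with a constant depending only on $q,N,\mu$, as soon as one evaluates at distance $>2\gb_n$ from $\bdw$; this simultaneously makes $U=\lim v_n$ a genuine solution on all of $\Gw$ and produces the uniform bound $v_n\le C_\gr$ on $\Gs_\gr$, which is precisely what is needed to trap the $v_n$ beneath the local supersolution $H$ and thereby transport the normalised boundary trace $\nu$ from the strip $\Gw_\gr$ to the whole of $\Gw$.
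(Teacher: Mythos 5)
Your proposal is correct and follows the same overall scheme as the paper's proof: solve the problem on the strip via Proposition \ref{nl-existence} with the measure $\nu'$, use that strip solution as Dirichlet data for interior problems (Lemma \ref{interior-bvp}) on an exhausting family of domains $D_{\beta_n}$, control the approximating family by the Keller--Osserman estimate of Lemma \ref{OK}, pass to the limit, and then pin down the normalised trace by squeezing in $\Omega_\rho$. The one genuine difference is the upper barrier. The paper dominates its approximating solutions by $U_{\nu,\infty}=\lim_{k\to\infty}U_{\nu,k}$, the limit of strip solutions with data $k\,dS$ on $\Sigma_\rho$, and reads off the trace from $U_{\nu,0}\le v\le U_{\nu,\infty}$; you instead combine the uniform local Keller--Osserman bound $v_n\le C_\rho$ on $\Sigma_\rho$ with the \emph{linear} barrier $H=\K_\mu^{\Omega_\rho}[\nu']+G$, where $G$ is $\L_\mu$-harmonic with boundary value $C_\rho$ on $\Sigma_\rho$ and Martin representing measure carried by $\Sigma_\rho$ (so $\mathrm{tr}^*_{\bdw}(G)=0$, as in Lemma \ref{tr-unique}). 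This buys a completely explicit trace identification, $|U-\K_\mu^{\Omega_\rho}[\nu]|\le|U'-\K_\mu^{\Omega_\rho}[\nu]|+G$ on $\Sigma_\eps$, a point the paper leaves implicit (its route needs $\mathrm{tr}^*_{\bdw}(U_{\nu,\infty})=\nu$), at the modest cost of the boundary-regularity assertions that $\K_\mu^{\Omega_\rho}[\nu']$ (hence $U'$) vanishes continuously on $\Sigma_\rho$ and that $G$ attains $C_\rho$ there --- standard facts near the smooth inner boundary, where the potential is bounded. Two cosmetic remarks: the inequality $G\le C_\rho$ you assert is not automatic when $\mu>0$ (constants are then subsolutions, not supersolutions, of $\L_\mu$), but it is never used --- only $G\ge0$, $G=C_\rho$ on $\Sigma_\rho$ and $\mathrm{tr}^*_{\bdw}(G)=0$ enter; and your monotone increasing scheme simply replaces the paper's subsequence extraction, which is immaterial.
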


\proof By Proposition \ref{nl-existence} there exists a (unique) solution $U_{\nu,0}$ of problem $(P_\mu^{\nu'}(\rho))$. For every $k\geq 0$,  let $\nu_k\in \M^+(\bdw_\rho)$ be the measure given by, $\nu_k\chr{\bdw}=\nu$ and $\nu_k\chr{\Gs_\rho}=k dS_{\Gs_\rho}$. By the same proposition there exists a (unique) solution $U_{\nu,k}$ of $(P_\mu^{\nu_k}(\rho))$. Put
$$U_{\nu,\infty}=\lim_{k\to\infty}U_{\nu,k}.$$

Let $R\in (0,\rho)$. By Lemma \ref{interior-bvp} there exists a unique solution $v_R$ of \eqref{int-bvp} in $D_R$ with $f=U_{\nu,0}\lfloor_{\Gs_R}$.
 By the comparison principle,
$$U_{\nu,0}\leq v_R\leq U_{\nu,\infty}\quad \text{in }\Gw_\rho\cap D_R.$$
By Proposition \ref{OK} the family $\{v_R:\,0<R<\rho\}$ is bounded in compact subsets of $\Gw$. Therefore there exists a sequence $\{R_j\}$ converging to zero \sth $v_{R_j}$ converges to a solution $v$ of the nonlinear equation in $\Gw$.
By construction,
$$U_{\nu,0}\leq v\leq U_{\nu,\infty}\quad \text{in }\Gw_\rho.$$
Therefore $\tr_{\bdw}(v)=\nu$.
\qed

\begin{remark}
If $\mu<C_H(\Gw)$ then problem \eqref{*nu} has at most one solution, \cite{MMN}*{Theorem B}. However uniqueness fails  when $C_H(\Gw)<\mu<1/4$. It was proved in \cite{BMR08}*{Theorem 5.3} that in this case there exists a positive solution of \eqref{*nu} with $\nu=0$.
An alternative, more direct proof, is presented in Appendix \ref{App-B}.
\end{remark}

\begin{proposition}\label{mod-sol} Assume that $u\in L^q_{loc}(\Gw)$ is a positive solution of \eqref{*-intro}. Then the following assertions are equivalent:

 (i) $u$  has a normalized boundary trace,

 (ii) $u$ is a moderate solution in the sense of Definition \ref{d-moderate},

 (iii) $u\in L^q(\Gw;\gd^{\ga_+})$.

\end{proposition}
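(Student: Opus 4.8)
The plan is to prove the cyclic chain of implications $(iii)\Rightarrow(i)\Rightarrow(ii)\Rightarrow(iii)$, which is the most economical route given the machinery already developed.

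\textbf{Step 1: $(iii)\Rightarrow(i)$.} Suppose $u\in L^q(\Gw;\gd^{\ga_+})$. Fix $\rho\in(0,\bar\rho)$. Since $u$ is a positive solution of \eqref{*} in $\Gw$, it is in particular $\L_\mu$-subharmonic in $\Gw_\rho$, and $-\L_\mu u = -u^q =: -\tau$ with $\tau = u^q dx \in \M^+_{\gd^{\ga_+}}(\Gw_\rho)$ by hypothesis. Then $u + \G_\mu^{\Gw_\rho}[\tau]$ is a nonnegative $\L_\mu$-harmonic function in $\Gw_\rho$; more precisely, one argues as in the proof of Corollary~\ref{PL-nu-converse} (approximating $\tau$ by $\tau_\gb$ supported away from $\bdw$, solving the linear problem in $D_\gb\sms\bar D_\rho$, and passing to the limit) to obtain
$$u + \G_\mu^{\Gw_\rho}[\tau] = \K_\mu^{\Gw_\rho}[\nu]$$
for some $\nu\in\M^+(\bdw_\rho)$. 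Since $\G_\mu^{\Gw_\rho}[\tau]$ is an $\L_\mu$-potential, Theorem~\ref{key-res}(ii) gives $\eps^{-\ga_-}\int_{\Gs_\eps}\G_\mu^{\Gw_\rho}[\tau]\,dS\to 0$, so $\tr_{\bdw}(u) = \nu\chr{\bdw}$ exists; this is assertion~(i). (Alternatively one invokes Corollary~\ref{PL-nu-converse} directly once one knows $u$ is dominated by the $\L_\mu$-harmonic function $\K_\mu^{\Gw_\rho}[\nu]$ — but establishing the representation is what produces the domination, so it is cleanest to do it by hand.)

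\textbf{Step 2: $(i)\Rightarrow(ii)$.} Suppose $u$ has a normalized boundary trace $\nu_0\in\M^+(\bdw)$. By Definition~\ref{traces} this means $\eps^{-\ga_-}\int_{\Gs_\eps}|u - \K_\mu^{\Gw_\rho}[\nu_0]|\,dS\to 0$ for some $\rho\in(0,\bar\rho]$. Set $h := \K_\mu^{\Gw_\rho}[\nu_0]$, a local positive $\L_\mu$-harmonic function (positive because $u$ is positive, so $\nu_0\neq 0$ unless $u$ itself tends to zero fast, a case handled trivially). Applying Corollary~\ref{PL-nu-converse} to the subharmonic function $w=u$: since $u$ possesses a normalized boundary trace, $u$ is dominated in $\Gw_\rho$ by a positive $\L_\mu$-superharmonic function, hence (shrinking $\rho$ if needed and using that an $\L_\mu$-harmonic dominant exists — e.g. $\K_\mu^{\Gw_\rho}[\nu_0+\nu_\rho]$ from \eqref{u<Kmr}) $u\le h$ in $\Gw_\rho$ for a local positive $\L_\mu$-harmonic $h$. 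This is precisely Definition~\ref{d-moderate}.

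\textbf{Step 3: $(ii)\Rightarrow(iii)$.} Suppose $|u|\le h$ in $\Gw_\rho$ for some local positive $\L_\mu$-harmonic $h$ and some $\rho\in(0,\bar\rho]$. Since $u$ is $\L_\mu$-subharmonic in $\Gw_\rho$ and dominated by the $\L_\mu$-(super)harmonic function $h$, Theorem~\ref{tr-sub} applies: $\L_\mu u = \tau \in \M^+_{\gd^{\ga_+}}(\Gw_\rho)$, i.e. $\int_{\Gw_\rho}\gd^{\ga_+}u^q\,dx<\infty$. Away from $\bdw$, in $D_\rho = \{\gd>\rho\}$, $u$ is a bounded (by Keller–Osserman, Lemma~\ref{OK}) continuous solution, so $\int_{D_\rho}\gd^{\ga_+}u^q\,dx\le C\int_{D_\rho}u^q\,dx<\infty$ trivially since $\gd$ is bounded there and $u\in L^\infty_{\loc}$. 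Adding the two contributions gives $u\in L^q(\Gw;\gd^{\ga_+})$.

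\textbf{Main obstacle.} The delicate point is Step~1 (equivalently the integrability bookkeeping that makes Theorem~\ref{tr-sub} applicable), namely confirming that $\tau=u^q dx$ lies in $\M^+_{\gd^{\ga_+}}(\Gw_\rho)$ and that the Green-potential representation $u+\G_\mu^{\Gw_\rho}[\tau]=\K_\mu^{\Gw_\rho}[\nu]$ holds rather than merely an inequality. This is exactly the exhaustion argument in Corollary~\ref{PL-nu-converse}, and one must be careful that the monotone limits of the approximate potentials $\G_\mu^{D_\gb\sms\bar D_\rho}[\tau_\gb]$ stay finite — which in turn is guaranteed precisely because $u$ is dominated by a finite $\L_\mu$-harmonic function near $\bdw$. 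Once the representation is in hand, Theorem~\ref{key-res}(ii) does all the remaining work, and the other two implications are essentially bookkeeping with the results already established.
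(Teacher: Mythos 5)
Your argument is correct and rests on exactly the same machinery as the paper's proof: the representation $u+\G_\mu^{\Gw_\rho}[u^q]=\K_\mu^{\Gw_\rho}[\nu]$ together with Theorem \ref{key-res}(ii) for (iii)$\Rightarrow$(i), the domination $u\le\K_\mu^{\Gw_\rho}[\nu_0+\nu_\rho]$ from Corollary \ref{PL-nu-converse}/\eqref{u<Kmr} for (i)$\Rightarrow$(ii), and Theorem \ref{tr-sub} plus the Keller--Osserman bound in the interior for (ii)$\Rightarrow$(iii). The only difference is organizational (a cyclic chain instead of the paper's pair of equivalences), and your parenthetical worry about $\nu_0=0$ in Step~2 is harmless since the harmonic dominant you actually use is $\K_\mu^{\Gw_\rho}[\nu_0+\nu_\rho]$, which is positive whenever $u$ is.
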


\proof The assumption implies that $\L_\mu u\leq 0$ in $\Gw$. If $\rho\in (0,\bar\rho]$ then, by Lemma \ref{PL-nu-converse}, (i) holds if and only if $u$ is dominated by an $\L_\mu$-superharmonic function in $\Gw_\rho$. Consequently, by Lemma \ref{l-subsup}, (i) holds if and only if $u$ is dominated by an $\L_\mu$-harmonic function in $\Gw_\rho$. Thus (i) and (ii) are equivalent.

If (iii) holds then $v:=u+\BBG_\mu^{\Gw_\rho}[u^q]$ is $\L_\mu$-harmonic. By the representation theorem there exists $\nu\in\M(\bdw_\rho)$ such that $v=\K_\mu^{\Gw_\rho}[\nu]$. Since $\tr_{\bdw}\BBG_\mu^{\Gw_\rho}[u^q]=0$ it follows that $\nu\chr{\bdw}$ is the normalized boundary trace of $u$. Conversely if (ii) holds then by Theorem \ref{tr-sub} $\L_\mu u=u^q\in\M^+_{\delta^{\alpha_+}}(\Omega_\rho)$ which is the same as (iii).
\qed
\subsection{Critical exponents}
The next result provides  necessary and sufficient conditions in order that a positive measures  $\nu\in \M^+(\bdw)$ satisfy,
\begin{equation}\label{K[nu]}
 \K_\mu^{\Omega_\rho}[\nu]\in L^q_{\delta^{\alpha_+}}(\Omega_\rho)\quad\text{for some $\rho>0$}.
\end{equation}

Let $\Gamma_a(x-y)=|x-y|^{-(N-a)}$ denote the Riesz kernel  of order $0<a<N$ in $\R^N$.

\begin{proposition}\label{vg-measure}
Let  $\nu\in \mathcal M^+(\partial\Omega)$.

(i) If $\Gamma_1 *\nu\in L^q_{\gd^{1+(q-1)\ga_-}}(\Gw)$ then $\nu$ satisfies \eqref{K[nu]}.

(ii) Assume $\mu\geq 0$. If $\nu$ satisfies \eqref{K[nu]} then $\BBP_0^\Gw[\nu]\in L^q_{\gd^{1+(q-1)\ga_-}}(\Gw)$.

Here $P_0^\Gw$ is the Poisson kernel of $-\Delta$ in $\Omega$: $P_0^\Gw(x,y)=\gd(x)|x-y|^{-N}$.
\end{proposition}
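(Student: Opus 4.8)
The plan is to reduce the weighted $L^q$-condition \eqref{K[nu]} on the Martin kernel $\K_\mu^{\Omega_\rho}[\nu]$ to a condition involving only the Euclidean Poisson kernel and a Riesz potential of $\nu$, and then to estimate that potential. The starting point is the two-sided bound from Theorem \ref{KOR}, namely \eqref{Kest3},
\[
K_\mu^U(x,y)\sim \gd(x)^{\ga_+}|x-y|^{2\ga_- -N}
\quad\text{for }(x,y)\in\Gw_t\times\bdw,
\]
combined with the classical estimate $P_0^\Gw(x,y)=\gd(x)|x-y|^{-N}$. I will work in a fixed strip $\Gw_\rho$ with $\rho$ small enough that this equivalence holds with the relevant constants. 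Integrating in $y$ over $\bdw$ against $\nu$ gives
\[
\K_\mu^{\Omega_\rho}[\nu](x)\sim \gd(x)^{\ga_+}\!\int_{\bdw}|x-y|^{2\ga_- -N}\,d\nu(y),
\]
so that, since $\ga_++\ga_-=1$, one has $\ga_+ = 1-\ga_-$ and the integral $\int_{\bdw}|x-y|^{2\ga_--N}\,d\nu(y)$ is (a boundary version of) the Riesz potential $\Gamma_{2\ga_-}*\nu$ evaluated near $\bdw$. Raising to the power $q$ and multiplying by $\gd^{\ga_+}$, the condition $\K_\mu^{\Omega_\rho}[\nu]\in L^q_{\gd^{\ga_+}}(\Gw_\rho)$ becomes
\[
\int_{\Gw_\rho}\gd(x)^{\ga_+ + q\ga_+}\Big(\int_{\bdw}|x-y|^{2\ga_- -N}\,d\nu(y)\Big)^q dx<\infty,
\]
and since $\ga_+(1+q)=1+(q-1)\ga_+ = 1+(q-1)(1-\ga_-)$, a small algebraic rearrangement (using $1+(q-1)\ga_+$ versus $1+(q-1)\ga_-$ and relating the boundary Riesz kernel of order $2\ga_-$ to $\Gamma_1*\nu$ via the extra factor $\gd$ coming from integrating off the boundary) is what converts the statement into "$\Gamma_1*\nu\in L^q_{\gd^{1+(q-1)\ga_-}}(\Gw)$", respectively "$\BBP_0^\Gw[\nu]\in L^q_{\gd^{1+(q-1)\ga_-}}(\Gw)$". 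Concretely, $\BBP_0^\Gw[\nu](x)=\int_{\bdw}\gd(x)|x-y|^{-N}\,d\nu(y)$, and one checks that for $x$ near $\bdw$ one has $\gd(x)|x-y|^{-N}\sim \gd(x)^{1-2\ga_-}\cdot\big(\gd(x)^{2\ga_--1}|x-y|^{2\ga_--N}\big)\cdot|x-y|^{-2\ga_-}$; keeping the bookkeeping of powers straight is the bulk of the routine work.

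For part (i), I would assume $\Gamma_1*\nu\in L^q_{\gd^{1+(q-1)\ga_-}}(\Gw)$ and deduce \eqref{K[nu]} by the chain of equivalences above, reading it from right to left; here I only need the \emph{upper} bound in \eqref{Kest3}, so this direction is unconditional in $\mu$ and valid for all $\mu<1/4$. For part (ii), assuming $\mu\ge0$ (so that $0\le\ga_-\le 1/2\le\ga_+\le1$, which is what makes the comparison between $\BBP_0^\Gw[\nu]$ and $\K_\mu^{\Omega_\rho}[\nu]$ monotone in the right direction) I would use the \emph{lower} bound in \eqref{Kest3} together with the lower bound $P_0^\Gw(x,y)\gtrsim\gd(x)|x-y|^{-N}$ for $x$ near $\bdw$, and run the same reduction in reverse to conclude $\BBP_0^\Gw[\nu]\in L^q_{\gd^{1+(q-1)\ga_-}}(\Gw)$. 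One subtlety: \eqref{Kest3} only controls $K_\mu^U$ on $\Gw_t$ for small $t$, so I must separately note that on $\Gw\setminus\Gw_t$ everything is bounded and contributes a finite amount to all the integrals, hence the global $L^q_{\gd^{\ga_+}}(\Gw)$ statement is equivalent to the local $L^q_{\gd^{\ga_+}}(\Gw_\rho)$ one; this is where I use that $\gd$ is bounded away from $0$ on $D_t$ and that $\K_\mu^{\Omega_\rho}[\nu]$ is continuous there.

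The main obstacle I anticipate is \emph{not} conceptual but the exponent bookkeeping: making sure the weight $\gd^{1+(q-1)\ga_-}$ (rather than $\gd^{1+(q-1)\ga_+}$, which is what naively drops out of $\gd^{\ga_+}\cdot\gd^{q\ga_+}$) emerges correctly. The resolution is that the boundary integral $\int_{\bdw}|x-y|^{2\ga_--N}\,d\nu(y)$ differs from the full-space Riesz potential $\Gamma_1*\nu$, respectively from $\BBP_0^\Gw[\nu]$, precisely by powers of $\gd(x)$ and $|x-y|$ that, when the $q$-th power is taken and combined with the original $\gd^{\ga_+}$ weight, shift the exponent by exactly $(q-1)(\ga_+-\ga_-) = (q-1)(1-2\ga_-)$ — and $1+(q-1)\ga_+ - (q-1)(1-2\ga_-) = 1+(q-1)(1-\ga_- -1+2\ga_-) = 1+(q-1)\ga_-$, as required. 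Once this identity is verified, parts (i) and (ii) follow by a direct comparison using \eqref{Kest3} plus Fubini, with no further input. Finally, since $\BBP_0^\Gw[\nu]$ and $\Gamma_1*\nu$ are themselves comparable near $\bdw$ (both $\sim\gd(x)\int_{\bdw}|x-y|^{-N}d\nu$ up to the distinction between the intrinsic and Euclidean distance, which is harmless on a smooth domain), parts (i) and (ii) together show that for $\mu\ge0$ the condition \eqref{K[nu]} is in fact \emph{equivalent} to $\Gamma_1*\nu\in L^q_{\gd^{1+(q-1)\ga_-}}(\Gw)$, which is how this proposition will be applied in the determination of the critical exponents.
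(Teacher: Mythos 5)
Your argument is in substance the same as the paper's: both proofs rest on the two–sided estimate \eqref{Kest3}, rewritten as $K_\mu^{\Gw_\rho}(x,y)\sim \gd(x)^{\ga_-}\Gg_1(x-y)\big(|x-y|/\gd(x)\big)^{2\ga_--1}\sim \gd(x)^{\ga_-}P_0^\Gw(x,y)\big(|x-y|/\gd(x)\big)^{2\ga_-}$, and on the observation that $|x-y|\ge \gd(x)$ for $y\in\bdw$, so the upper comparison $K_\mu^{\Gw_\rho}\le c\,\gd^{\ga_-}\Gg_1$ holds for every $\mu<1/4$ (since $2\ga_--1<0$), while the lower comparison $K_\mu^{\Gw_\rho}\ge c\,\gd^{\ga_-}P_0^\Gw$ needs $\ga_-\ge 0$, i.e.\ $\mu\ge 0$; after that the weight is read off from $q\ga_-+\ga_+=1+(q-1)\ga_-$, exactly as in the paper. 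Two caveats about your bookkeeping as written: the displayed relation $\gd(x)|x-y|^{-N}\sim \gd(x)^{1-2\ga_-}\big(\gd(x)^{2\ga_--1}|x-y|^{2\ga_--N}\big)|x-y|^{-2\ga_-}$ is off by a factor $\gd(x)$, and the identity ``$\ga_+(1+q)=1+(q-1)\ga_+$'' is false (the naive exponent is $(q+1)\ga_+$ and the shift produced inside the $q$-th power is $q(1-2\ga_-)$, not $(q-1)(1-2\ga_-)$); these two slips cancel, which is why you still land on $1+(q-1)\ga_-$, but the clean route is the one–line computation above. Finally, your closing claim that $\BBP_0^\Gw[\nu]$ and $\Gg_1*\nu$ are comparable near $\bdw$, and hence that for $\mu\ge0$ condition \eqref{K[nu]} is \emph{equivalent} to $\Gg_1*\nu\in L^q_{\gd^{1+(q-1)\ga_-}}(\Gw)$, is not justified: one only has $P_0^\Gw(x,y)\le \Gg_1(x-y)$ (up to a constant), not the reverse, since $|x-y|$ may be much larger than $\gd(x)$; this one–sidedness is precisely why the proposition is stated asymmetrically, with $\Gg_1*\nu$ in (i) and $\BBP_0^\Gw[\nu]$ in (ii).
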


\proof
 By \eqref{Kest3},
\begin{equation}\label{Kest4}\BAL
 K_\mu^{\Gw_\rho}(x,y)&\sim \frac{\gd(x)^{\ga_+}}{|x-y|^{N-2\ga_-}}
 \sim \gd(x)^{\ga_-}P_0^\Gw(x,y)\big(|x-y|/\gd(x)\big)^{2\ga_-}\\
 &\sim \gd(x)^{\ga_-}\Gg_1(x-y)\big(|x-y|/\gd(x)\big)^{-1+2\ga_-},
 \EAL\end{equation}
for every $(x,y)\in \Gw_{\rho/2}\ti \bdw$.

For every $\mu<1/4$ we have $-1+2\ga_-<0$. Consequently,
\begin{equation}\label{Kest5}
 K_\mu^{\Gw_\rho}(x,y)\leq c \gd(x)^{\ga_-}\Gg_1(x-y) \forevery (x,y)\in \Gw_{\rho/2}\ti \bdw.
\end{equation}
Hence,
$$ \|\K_\mu^{\Omega_\rho}\nu\|_{L^q_{\delta^{\alpha_+}}(\Omega_{\rho/2})}^q \leq
c\int_{\Gw_{\rho/2}}\Big(\int_{\bdw}\Gg_1(x-y)d\nu(y)\Big)^q \gd(x)^{q\ga_- +\ga_+}dx.$$
This proves (i).
\medskip

If $\mu\geq 0$, so that $\ga_-\geq 0$ then, by \eqref{Kest4},
\begin{equation}\label{Kest6}
 K_\mu^{\Gw_\rho}(x,y)\geq c \gd(x)^{\ga_-}P_0^\Gw(x,y) \forevery (x,y)\in \Gw_{\rho/2}\ti \bdw.
\end{equation}
Therefore
$$ \|\K_\mu^{\Omega_\rho}[\nu]\|_{L^q_{\delta^{\alpha_+}}(\Omega_{\rho/2})}^q \geq
c\int_{\Gw_{\rho/2}}\Big(\int_{\bdw}P_0^\Gw(x,y)d\nu(y)\Big)^q \gd(x)^{q\ga_- +\ga_+}dx.$$
This proves (ii).
\qed

Using this result we provide a necessary and sufficient condition for the existence of positive moderate solutions of \eqref{*}.

\begin{proposition}\label{L1data} Let  $\nu\in \mathcal M^+(\partial\Omega)$.

(i) If $\ga_->-\frac{2}{q-1}$ then the boundary value problem \eqref{*nu}
 has a solution for every measure $\nu=f\,dS_{\bdw}$ such that $f\in L^1(\bdw)$.

 (ii) If $\ga_-\leq -\frac{2}{q-1}$ then, for every $\nu \gneq 0$, \eqref{*nu} has no solution.

 \end{proposition}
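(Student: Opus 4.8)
For part (i), the strategy is to reduce to the existence theorem already proved (Theorem \ref{Gw-ex}, equivalently Proposition \ref{nl-existence}) by verifying its hypothesis, namely that $\K_\mu^{\Omega_\rho}[\nu]\in L^q_{\delta^{\alpha_+}}(\Omega_\rho)$ for some $\rho$. By Proposition \ref{vg-measure}(i) it suffices to show $\Gamma_1*\nu\in L^q_{\delta^{1+(q-1)\alpha_-}}(\Omega)$ whenever $\nu=f\,dS_{\bdw}$ with $f\in L^1(\bdw)$. First I would reduce to the case $f=\chr{E}$ (or even a point mass, by a limiting/monotonicity argument together with the $L^1$-bound on $f$), and estimate $\|\Gamma_1*\nu\|^q_{L^q_{\delta^{1+(q-1)\alpha_-}}(\Omega)}$ directly. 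Writing the integral in Fubini form and using that for $y\in\bdw$ the quantity $\int_{\Omega}|x-y|^{-q(N-1)}\delta(x)^{1+(q-1)\alpha_-}\,dx$ is finite precisely when the weight tames the singularity at $y$: near $y$ one integrates $r^{-q(N-1)}\cdot r^{1+(q-1)\alpha_-}\cdot r^{N-1}\,dr$ over $r\in(0,1)$, which converges iff $-q(N-1)+1+(q-1)\alpha_-+N-1>-1$, i.e. (after simplification using $\alpha_++\alpha_-=1$) iff $(q-1)\alpha_->-2$, that is $\alpha_->-\tfrac{2}{q-1}$. This is exactly the hypothesis, so the convolution lies in the required weighted space, and Theorem \ref{Gw-ex} delivers the solution.

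For part (ii), the nonexistence, the plan is a rescaled-barrier / Keller--Osserman argument localized near a boundary point in the support of $\nu$. Suppose $u$ solves \eqref{*nu} with $\nu\gneq0$. Since $u$ has normalized boundary trace $\nu\ne0$, Theorem \ref{limit-intro}(i) (equivalently Proposition \ref{mod-sol}) gives $u\in L^q(\Omega;\delta^{\alpha_+})$, and by \eqref{ntr-rho} together with Theorem \ref{key-res}(i) one has the lower bound $\eps^{-\alpha_-}\int_{\Sigma_\eps}u\,dS\ge c\|\nu\|>0$ for small $\eps$, hence $\int_{\Sigma_\eps}u\,dS\gtrsim\eps^{\alpha_-}$. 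On the other hand, $u$ is a subsolution, so the Keller--Osserman estimate of Lemma \ref{OK} forces $u(x)\le\gamma_\ast\delta(x)^{-2/(q-1)}$ throughout $\Omega$; integrating over $\Sigma_\eps$ gives $\int_{\Sigma_\eps}u\,dS\lesssim\eps^{-2/(q-1)}$. These two bounds are compatible as $\eps\to0$ only if $\alpha_-\ge-\tfrac{2}{q-1}$. Actually the integrated Keller--Osserman bound alone is not quite sharp enough to contradict $\alpha_-\le-2/(q-1)$ at the borderline, so the cleaner route is: test the equation against a suitable nonnegative cutoff (or use $u\in L^q(\Omega;\delta^{\alpha_+})$ directly) to show $\int_{\Omega_\rho}u^q\delta^{\alpha_+}\,dx<\infty$, then combine with the pointwise lower bound $u\ge c\,\K_\mu^{\Omega_\rho}[\nu_0]$ coming from Theorem \ref{c-tr-sub}\eqref{tr-sub2} and the Martin kernel estimate \eqref{Kest3}; near a point $y\in\supp\nu_0$ one gets $u(x)^q\delta(x)^{\alpha_+}\gtrsim |x-y|^{q(2\alpha_--N)}\delta(x)^{q\alpha_++\alpha_+}$, whose integral over $\Omega_\rho$ diverges exactly when $(q-1)\alpha_-\le-2$, i.e. $\alpha_-\le-\tfrac{2}{q-1}$ — contradicting finiteness. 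Note $\alpha_-\le-2/(q-1)<0$ already forces $\mu<0$, so $\alpha_-$ is genuinely negative and this regime is nonempty.

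The main obstacle is the nonexistence direction (ii): the delicate point is getting a lower bound on $u$ that is strong enough near $\partial\Omega$. The nontangential lower bound from Theorem \ref{c-tr-sub} holds only $\nu_0$-a.e., so one must either localize carefully around a density point $y$ of $\nu_0$ where the nontangential limit is $1$, or instead work with the cruder but everywhere-valid lower bound $\K_\mu^{\Omega_\rho}[\nu_0]\ge c\,\K_\mu^{\Omega_\rho}[\delta_y]$ for a single $y\in\supp\nu_0$ (using positivity of the Martin kernel) and then the two-sided estimate \eqref{Kest3} to convert this into the concrete power of $|x-y|$ and $\delta(x)$ needed for the divergence computation. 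Checking that $u$ really does dominate such a kernel — and tracking that $u$ satisfies the normalized-trace lower bound uniformly — is where the care is required; the exponent bookkeeping via $\alpha_++\alpha_-=1$ is then routine.
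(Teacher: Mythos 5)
Your plan for part (i) breaks down at the exponent bookkeeping, and the error is structural, not just arithmetic. Reducing to a point mass (equivalently, estimating $\int_\Omega|x-y|^{-q(N-1)}\gd(x)^{1+(q-1)\ga_-}dx$ for a single $y\in\bdw$) gives convergence precisely when $-q(N-1)+1+(q-1)\ga_-+N-1>-1$, which simplifies to $(q-1)\bigl(\ga_--(N-1)\bigr)>-2$, i.e.\ $q<q_{\mu,c}$ --- \emph{not} to $(q-1)\ga_->-2$ as you claim; you have dropped the term $(q-1)(N-1)$. Since $q_{\mu,c}<q^*_\mu$ when $\mu<0$, no reduction to point masses can possibly cover the full range of part (i): for $q\in[q_{\mu,c},q^*_\mu)$ the Dirac mass is not an admissible datum while $L^1$ data is, so the per-point kernel norm is infinite exactly where the proposition still asserts existence. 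The paper's route is different: for \emph{bounded} $f$ one has the logarithmic bound $\Gamma_1*(f\,dS)\le c\|f\|_{L^\infty}|\ln\gd|$ (see \eqref{int_bdw1}), whose weighted $L^q$ norm is finite iff $1+(q-1)\ga_->-1$, i.e.\ exactly under the hypothesis of (i); general $f\in L^1(\bdw)$ is then reached by truncating, $\nu_n=\min(f,n)\,dS$, and passing to the limit of the nondecreasing solutions $u_n$ using the Keller--Osserman bound \eqref{KO}.

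For part (ii), your first computation (trace lower bound $\int_{\Gs_\gb}u\,dS\gtrsim\gb^{\ga_-}$ versus the integrated Keller--Osserman bound $\lesssim\gb^{-2/(q-1)}$) is exactly the paper's argument, but, as you noticed, it only rules out $\ga_-<-2/(q-1)$. Your proposed ``cleaner route'' for the borderline does not work: (a) the lower bound $u\gtrsim\K_\mu^{\Omega_\rho}[\nu_0]$ is unavailable --- a solution satisfies $u\le\K_\mu^{\Omega_\rho}[\nu]$, the nontangential asymptotics \eqref{tr-sub2} hold only $\nu_0$-a.e.\ and do not upgrade to a pointwise domination, and the ``everywhere-valid'' bound $\K_\mu^{\Omega_\rho}[\nu_0]\ge c\,\K_\mu^{\Omega_\rho}[\delta_y]$ is false near $y$ unless $\nu_0$ has an atom there; moreover the paper notes after Theorem \ref{Knu-Lq} that solvability does not even force $\K_\mu^{\Omega_\rho}[\nu]\in L^q(\Gw;\gd^{\ga_+})$, so no contradiction can come from $u\in L^q(\Gw;\gd^{\ga_+})$ plus kernel divergence alone; (b) the same algebraic slip recurs: $\int_{B_r(y)\cap\Gw}|x-y|^{q(2\ga_--N)}\gd^{(q+1)\ga_+}dx$ diverges iff $q\ge q_{\mu,c}$, not iff $(q-1)\ga_-\le-2$, so even granting the lower bound you would only reprove the Dirac nonexistence statement. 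The paper closes the borderline case $\ga_-=-2/(q-1)$ by a different device: there the two integrated bounds force $u\sim\gd^{-2/(q-1)}\sim U_{\max}$, hence $cu\ge U_{\max}$ for some finite $c$, $cu$ is a supersolution, the largest solution $v\le cu$ has normalized trace $c\nu$, and $U_{\max}\le v$, which is impossible. That comparison with the maximal solution is the missing idea in your plan.
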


\noindent\textit{Remark.}\hskip 2mm When $\mu>0$ and consequently $\ga_->0$, the condition in (i) holds for every $q>1$.

\proof Let $\nu=f\,dS_{\bdw}$ and $f\in L^\infty(\bdw)^+$. Let $x\in \Gw_{\gb_0}$ and pick $x'\in \bdw$ such that  $|x'-x|=\gd(x)$. Then,
\begin{equation}\label{int_bdw1}
\BAL \int_{\bdw} |x-y|^{1-N} f(y) dS(y)&\leq c\|f\|_{L^\infty}\Big(\int_{\BSA{l} y\in \bdw\\ |x'-y|\geq \gd(x)\ESA} |x'-y|^{1-N}dS(y) + 1\Big)\\
&\leq c\|f\|_{L^\infty} (1+|\ln \gd(x)|)\leq c' \|f\|_{L^\infty}|\ln \gd(x)|,\EAL
\end{equation}
where $c'$ is independent of $x$.
Therefore, if $(q-1)\ga_- +1>-1$ then $\Gg_1*\nu\in L^q_{\gd^{1+(q-1)\ga_-}}(\Gw)$. Consequently, by Proposition \ref{vg-measure}~(i) and Theorem \ref{Gw-ex}, problem \eqref{*nu} has a solution.

Next, let $f\in L^1(\bdw)^+$ and $\nu=f\,dS_{\bdw}$. If $\nu_n=\min(f,n)dS_{\bdw}$ then problem $(P_\mu^{\nu_n})$
has a solution $u_n$ and the sequence $\{u_n\}$ is non-decreasing. In view of the Keller--Osserman estimate \eqref{KO},
$\{u_n\}$ converges to a solution $u$ of \eqref{*nu}. This proves (i).

We turn to part (ii). Suppose that $\ga_-\leq -\frac{2}{q-1}$ and that there exits $\nu\in \mathcal M^+(\bdw)\sms \{0\}$ such that problem \eqref{*nu} has a solution $u$. Then, there exists $c>0$ \sth
$$c\gb^{-\frac{2}{q-1}}\leq c\gb^{\ga_-}\leq \int_{\Gs_\gb} \K_\mu^{\Gw_\rho}[\nu] dS \forevery \gb\in (0,\gb_0).$$

Since $u=-\G_\mu[u^q]+\K_\mu[\nu]$ and $\tr_{\bdw}(\G_\mu[u^q])=0$ it follows that, for sufficiently small $\gb_1$,
\begin{equation}\label{u-growth}
c\gb^{\ga_-}\leq \int_{\Gs_\gb} u dS \forevery \gb\in (0,\gb_1).
\end{equation}
But, by the Keller-Osserman estimate, $u(x)\leq c_1\gd(x)^{-\frac{2}{q-1}}$ so that \begin{equation}\label{u-growth+}
c\gb^{\ga_-}\leq \int_{\Gs_\gb} u dS \leq c_2 \gb^{-\frac{2}{q-1}}\forevery \gb\in (0,\gb_1).
\end{equation}
If $\ga_-<-2/(q-1)$ we reached a contradiction. If $\ga_- = -2/(q-1)$ then, in view of the Keller-Osserman estimate \eqref{KO} we conclude that $u(x)\sim \gd(x)^{-\frac{2}{q-1}}$. This implies that $u\sim U_{max}$ ($=$ the maximal solution of $-\L_\mu v+v^q=0$). Thus $\sup U_{max}/u:=c<\infty$. Now $cu$ is a supersolution and, if $v$ is the largest solution dominated by $cu$ then $\tr(v)=c\,\tr(u)=c\nu$. It follows that $U_{max}\leq v$ which is impossible.
\qed
\medskip

\begin{remark} When $\mu>0$ -- and consequently $\ga_->0$ -- the condition in (i) holds trivially for every $q>1$.
However, if $\mu<0$  and
$$q\geq q_{\mu}^*:=1-\frac{2}{\alpha_-}$$
then equation \eqref{*-intro} has no moderate solution except for the trivial solution.
\end{remark}

\begin{lemma}\label{qcmu}
Let $\mu<C_H(\Gw)$ and put
$$q_{\mu,c}=\frac{N+1-\alpha_-}{N-1-\alpha_-}.$$
Then, for $y\in \bdw$,
$$ K_\mu^\Gw(\cdot,y)\in {L^q(\Gw, \gd^{\ga_+})} \Longleftrightarrow q<q_{\mu,c}.$$
For every $q\in(1,q_{\mu,c})$  there exists a number $c=c(q,N,\mu)$ such that
 \begin{equation}\label{Knu}
\|K_\mu^{\Omega}[\nu]\|_{L^{\frac{N+\alpha_+}{N-1-\alpha_-}}(\Omega, \delta^{\alpha_+})}\leq c\|\nu\| \forevery \nu\in \M(\bdw).
 \end{equation}
\end{lemma}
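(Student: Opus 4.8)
The plan is to reduce the statement to the boundary behaviour of $K_\mu^\Gw(\cdot,y)$ together with the classical integrability criterion for a product of powers of $\gd$ and $|x-y|$, and then to deduce \eqref{Knu} from Minkowski's integral inequality. Since $\mu<C_H(\Gw)$, Theorem~\ref{t-repr} applies to $\L_\mu$ in $\Gw$, so $K_\mu^\Gw$ is well defined, and I would first record the global analogue of \eqref{Kest3}: there is $\rho_1=\rho_1(\Gw)>0$ such that
$$K_\mu^\Gw(x,y)\sim \gd(x)^{\ga_+}|x-y|^{2\ga_- -N}\qquad\text{for }(x,y)\in\Gw_{\rho_1}\ti\bdw,$$
with similarity constants independent of $y\in\bdw$. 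This is proved in \cite{MMN}*{Sec.~2.2} by the same \BHP argument as in the proof of Theorem~\ref{KOR}; the uniformity in $y$ follows from the smoothness and compactness of $\bdw$, since the local Lipschitz data entering the \BHP can be chosen independently of $y$. On $\{\gd\geq\rho_1\}$, a compact subset of $\Gw$ at positive distance from $\bdw$, the function $K_\mu^\Gw(\cdot,y)$ is $\L_\mu$-harmonic with bounded potential, hence, by Harnack and the normalisation of the Martin kernel, bounded by a constant independent of $y\in\bdw$, while $\gd^{\ga_+}$ is bounded there. Consequently $\|K_\mu^\Gw(\cdot,y)\|^q_{L^q(\Gw,\gd^{\ga_+})}$ is finite if and only if $\int_{\Gw_{\rho_1}}\gd(x)^{(q+1)\ga_+}|x-y|^{q(2\ga_- -N)}\,dx$ is, and when finite the two are comparable, uniformly in $y\in\bdw$.

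Next I would invoke the standard fact that, for $y\in\bdw$, $\int_\Gw\gd(x)^a|x-y|^{-b}\,dx<\infty$ precisely when $a>-1$ and $b<N+a$; this is obtained by flattening $\bdw$ near $y$ and splitting the strip into the regions $\gd(x)\le|x-y|$ and $\gd(x)>|x-y|$, on each of which the integral is an elementary dyadic sum. Applying it with $a=(q+1)\ga_+$ and $b=q(N-2\ga_-)$: since $\mu<1/4$ forces $\ga_+>1/2>0$, the condition $a>-1$ holds for every $q>1$; and $b<N+a$ reads $q(N-2\ga_-)<N+(q+1)\ga_+$, that is, $q\bigl(N-(2\ga_-+\ga_+)\bigr)<N+\ga_+$. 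Using $\ga_++\ga_-=1$, so that $2\ga_-+\ga_+=1+\ga_-$, this becomes $q(N-1-\ga_-)<N+\ga_+$, and since $N-1-\ga_->0$ for $N\geq2$ it is exactly $q<\frac{N+\ga_+}{N-1-\ga_-}=\frac{N+1-\ga_-}{N-1-\ga_-}=q_{\mu,c}$. This yields the asserted equivalence, and for $q\in(1,q_{\mu,c})$ it gives a bound $\|K_\mu^\Gw(\cdot,y)\|_{L^q(\Gw,\gd^{\ga_+})}\le c(q,N,\mu)$ uniform in $y\in\bdw$.

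Finally, for \eqref{Knu} note that $|\K_\mu^\Gw[\nu]|\le \K_\mu^\Gw[|\nu|]=\int_\bdw K_\mu^\Gw(\cdot,y)\,d|\nu|(y)$ for every $\nu\in\M(\bdw)$, so by Minkowski's integral inequality in $L^q(\Gw,\gd^{\ga_+})$,
$$\bigl\|\K_\mu^\Gw[\nu]\bigr\|_{L^q(\Gw,\gd^{\ga_+})}\le\int_\bdw\bigl\|K_\mu^\Gw(\cdot,y)\bigr\|_{L^q(\Gw,\gd^{\ga_+})}\,d|\nu|(y)\le c(q,N,\mu)\,\|\nu\|,$$
with $c$ from the previous step. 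The main obstacle is the first step — establishing and, above all, using the kernel estimate uniformly in $y\in\bdw$, so that the constant furnished by the integrability criterion is independent of the pole; once this is in place the volume computation is routine and the operator bound is immediate. Of the two conditions in that criterion, $a>-1$ is the (here automatic) integrability of the boundary weight $\gd^{\ga_+}$, whereas $b<N+a$ is the genuine restriction $q<q_{\mu,c}$, encoding the competition between the boundary decay $\gd(x)^{\ga_+}$ of the kernel and its singularity $|x-y|^{2\ga_- -N}$ at the pole.
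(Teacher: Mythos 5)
Your argument is correct and follows essentially the same route as the paper: the two-sided estimate $K_\mu^\Gw(x,y)\sim \gd(x)^{\ga_+}|x-y|^{2\ga_--N}$ from \cite{MMN}, a weighted volume-integral computation giving the threshold $q<q_{\mu,c}$, and then Minkowski's integral inequality for \eqref{Knu} (which, as in the paper's own proof, is really the $L^q$ bound for $q<q_{\mu,c}$; the exponent displayed in \eqref{Knu} is a typo). Your treatment is only slightly more explicit than the paper's (uniformity in $y$ and the reduction to the criterion $a>-1$, $b<N+a$, where the paper reduces directly to a one-dimensional integral), but it is the same proof in substance.
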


\proof
Recall that
\begin{equation}\label{Kmu}
K_\mu^\Gw(x,y)\sim |x-y|^{2-N-\ga_+}(\gd(x)/|x-y|)^{\ga_+}=\gd(x)^{\ga_+}|x-y|^{2\ga_- -N},
\end{equation}
(see \cite{MMN}*{Section 2.2}).
Therefore,
$$c'(\frac{\gd(x)}{|x-y|})^{\ga_+}|x-y|^{1+\ga_- -N}\leq K_\mu(x,y)\leq c|x-y|^{1+\ga_- -N}.$$
It follows that $K_\mu(\cdot,y)\in {L^q(\Gw, \gd^{\ga_+})}$ if and only if
$$I:=\int_0^1 t^{q(1+\ga_- -N)}t^{\ga_+}t^{N-1}dt<\infty$$
and
$$\|K_\mu(\cdot,y)\|_{L^q(\Gw, \gd^{\ga_+})} \sim I.$$
A simple computation shows that $I<\infty$ if and only if
$$q<q_{\mu,c}=\frac{N+1-\ga_-}{N - 1-\ga_-}.$$

Finally,
$$\|K_\mu^{\Omega}[\nu]\|_{L^q(\Omega, \delta^{\alpha_+})}\leq \int_{\bdw}\|K_\mu(\cdot,y)\|_{L^q(\Gw, \gd^{\ga_+})}d|\nu|(y)\leq c\|\nu\|.$$
\qed

\begin{corollary} Let $\mu<1/4$. If $1<q<q_{\mu,c}$
then the boundary value problem \eqref{*nu}
 has a solution for every Borel measure $\nu$. Moreover, if $q\geq q_{\mu,c}$ then problem \eqref{*nu} has no solution when $\nu$ is the Dirac measure.
\end{corollary}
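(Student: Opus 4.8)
The plan is to assemble this from results already in place: the solvability criterion of Theorem~\ref{Gw-ex}, combined with Proposition~\ref{vg-measure}, handles the existence half, while the integrability and non-tangential asymptotics of moderate solutions recorded in Proposition~\ref{mod-sol} and Theorem~\ref{limit-intro} handle the non-existence half. Throughout fix $\rho\in(0,\bar\rho)$ and recall
$$q_{\mu,c}=\frac{N+1-\ga_-}{N-1-\ga_-}=\frac{N+\ga_+}{N-1-\ga_-},$$
so that the quantity $q(1+\ga_--N)+(N+\ga_+)$ is positive, zero, or negative according as $q<q_{\mu,c}$, $q=q_{\mu,c}$, or $q>q_{\mu,c}$ (here $1+\ga_--N<0$ for $N\ge2$).

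\emph{Existence for $1<q<q_{\mu,c}$.} Given $\nu\in\M^+(\bdw)$, I would first verify that $\Gamma_1*\nu\in L^q_{\gd^{1+(q-1)\ga_-}}(\Gw)$. By Minkowski's integral inequality this reduces to the bound
$$\sup_{y\in\bdw}\int_\Gw|x-y|^{q(1-N)}\,\gd(x)^{1+(q-1)\ga_-}\,dx<\infty,$$
which is precisely the elementary integral estimated in the proof of Lemma~\ref{qcmu}: split $\Gw$ into $\Gw\cap B_{1/2}(y)$ and its complement, and on the first piece pass to boundary coordinates at $y$ in which $\gd(x)$ is comparable to the normal variable; integrating first in the tangential variables and then in the normal one gives finiteness exactly when $q(1-N+\ga_-)>-(N+1-\ga_-)$, i.e.\ $q<q_{\mu,c}$, with a bound uniform in $y$ since $\bdw$ is smooth. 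Proposition~\ref{vg-measure}(i) then shows that $\nu$ satisfies \eqref{K[nu]}; since $\K_\mu^{\Gw_\rho}[\nu']=\int_{\bdw}K_\mu^{\Gw_\rho}(\cdot,y)\,d\nu(y)$ for the extension $\nu'$ of $\nu$ by zero to $\bdw_\rho$, Theorem~\ref{Gw-ex} produces a solution of \eqref{*nu} in $\Gw$. (Alternatively, estimate $K_\mu^{\Gw_\rho}(\cdot,y)$ directly from \eqref{Kest5}, using $q\ga_-+\ga_+=1+(q-1)\ga_-$, and apply Minkowski to get $\|\K_\mu^{\Gw_\rho}[\nu']\|_{L^q_{\gd^{\ga_+}}(\Gw_\rho)}\le c\|\nu\|$; the threshold computation is identical.)

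\emph{Non-existence for $q\ge q_{\mu,c}$ when $\nu$ is a Dirac mass.} Let $\nu=\delta_{y_0}$ with $y_0\in\bdw$, and suppose toward a contradiction that $u$ solves \eqref{*nu}. Then $u$ is a positive solution of \eqref{*-intro} with normalised boundary trace $\nu\neq0$, so Proposition~\ref{mod-sol} (equivalently Theorem~\ref{limit-intro}(i)) gives $u\in L^q(\Gw;\gd^{\ga_+})$, while Theorem~\ref{limit-intro}(ii), together with the fact that $\nu$ charges only the point $y_0$, gives
$$\lim_{x\to y_0}\frac{u(x)}{\K_\mu^{\Gw_\rho}[\nu](x)}=1\quad\text{non-tangentially.}$$
Hence there are $r>0$ and a non-tangential approach region $\Gamma\subset\Gw$ at $y_0$ with $u(x)\ge\tfrac12 K_\mu^{\Gw_\rho}(x,y_0)$ for $x\in\Gamma\cap B_r(y_0)$. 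On $\Gamma$ one has $\gd(x)\sim|x-y_0|$, so by \eqref{Kest3},
$$K_\mu^{\Gw_\rho}(x,y_0)\sim\gd(x)^{\ga_+}|x-y_0|^{2\ga_--N}\sim|x-y_0|^{\,1+\ga_--N},$$
and therefore
$$\int_\Gw u^q\gd^{\ga_+}\,dx\ \ge\ c\int_{\Gamma\cap B_r(y_0)}|x-y_0|^{\,q(1+\ga_--N)+\ga_+}\,dx\ \sim\ \int_0^r s^{\,q(1+\ga_--N)+\ga_++N-1}\,ds.$$
The last integral diverges exactly when $q(1+\ga_--N)+(N+\ga_+)\le0$, i.e.\ when $q\ge q_{\mu,c}$; this contradicts $u\in L^q(\Gw;\gd^{\ga_+})$, so \eqref{*nu} has no solution.

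\emph{Expected difficulty.} The existence half is essentially bookkeeping over Theorem~\ref{Gw-ex} and Proposition~\ref{vg-measure}; the only point requiring care is the uniformity in $y\in\bdw$ of the single-point integral, where smoothness of $\bdw$ enters (boundary coordinates with $y$-independent constants). The substantive step is the non-existence argument: one must be sure that the non-tangential limit in Theorem~\ref{limit-intro}(ii) genuinely yields the pointwise lower bound $u\ge\tfrac12 K_\mu^{\Gw_\rho}(\cdot,y_0)$ on a set clustering at $y_0$ inside a cone, and that the weighted integral $\int u^q\gd^{\ga_+}$ restricted to that cone diverges for $q\ge q_{\mu,c}$ — this is exactly the borderline computation defining $q_{\mu,c}$, which is what makes the threshold sharp.
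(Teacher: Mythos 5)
Your argument is correct and follows essentially the paper's own route: the existence half is the kernel-integrability threshold (Lemma~\ref{qcmu}, here replaced by the equivalent computation behind Proposition~\ref{vg-measure}(i) for the strip kernel) fed into Theorem~\ref{Gw-ex}, and the non-existence half rests on the non-tangential asymptotics of Proposition~\ref{nl-asymptotics} (Theorem~\ref{limit-intro}(ii)), exactly as the paper indicates. Your write-up simply fills in the one-line citations — notably by working with $K_\mu^{\Gw_\rho}$, which is what is actually needed when $C_H(\Gw)\le\mu<1/4$, and by deriving the contradiction from $u\in L^q(\Gw;\gd^{\ga_+})$ together with divergence of the cone integral, which also covers the borderline case $q=q_{\mu,c}$.
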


\proof
In view of Lemma \ref{qcmu}, the first assertion follows from Theorem \ref{Gw-ex}. The second assertion follows from Proposition \ref{nl-asymptotics}.
\qed

\appendix

\section{Non-uniqueness for $C_H(\Omega)<\mu< 1/4$}\label{App-B}

We are going to show that for $C_H(\Omega)<\mu< 1/4$ the problem
\begin{equation*}\tag{$P_{\mu}^0$}\label{*nu-0}
\left\{
\begin{array}{rcll}
-\L_\mu u+u^q&=&0&\text{in $\Gw$},\smallskip\\
\tr_\mu(u)&=&0,&
\end{array}
\right.
\end{equation*}
admits a nontrivial solution. This was proved in \cite{BMR08}*{Theorem 5.3}. Here we provide a more direct argument.

Recall that if $C_H(\Omega)< 1/4$ then the operator $-\L_{C_H(\Omega)}$ admits a positive ground state solution
$\phi_H\in H^1_0(\Omega)$ such that $-\L_{C_H(\Omega)}\phi_H=0$ in $\Omega$, see \cite{MMP}.

\begin{proposition}\label{p-nu0}
Assume that $C_H(\Omega)<\mu< 1/4$ and $q>1$.
Then \eqref{*nu-0} admits a positive solution $U_0$ such that
$$\liminf_{x\to\partial\Omega}\frac{U_0(x)}{\phi_H(x)}>0.$$
\end{proposition}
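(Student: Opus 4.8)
The plan is to use the ground state $\phi_H$ of $-\L_{C_H(\Gw)}$ to build a subsolution of \eqref{*nu-0} that is positive but has vanishing normalized boundary trace, and then pair it with a suitable supersolution in order to invoke Lemma~\ref{l-subsup}. First I would record the boundary behaviour of $\phi_H$: since $\mu > C_H(\Gw)$ the function $\phi_H$ is $\L_{C_H(\Gw)}$-harmonic and lies in $H^1_0(\Gw)$, so one expects $\phi_H \lesssim \gd^{\ga_+^{H}}$ near $\bdw$ where $\ga_\pm^{H} = \tfrac12 \pm \sqrt{\tfrac14 - C_H(\Gw)}$, and in particular $\ga_+^{H} > \ga_+(\mu)$ because $C_H(\Gw) < \mu$. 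Thus $\phi_H(x) \le c\,\gd(x)^{\ga_+^{H}} = o(\gd(x)^{\ga_+})$, which will force $\tr_{\bdw}(\phi_H) = 0$ once I know $\phi_H$ is controlled by the right power (Corollary~\ref{tr=zero}(i) applies to subharmonic functions, and $\phi_H$ is $\L_\mu$-subharmonic since $\mu > C_H(\Gw)$ makes $-\L_\mu \phi_H = (C_H(\Gw)-\mu)\gd^{-2}\phi_H \le 0$).

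Next I would turn $\phi_H$ into a genuine subsolution of the nonlinear equation. Take $\usub := \kappa\,\phi_H$ for a small constant $\kappa > 0$. Then $-\L_\mu \usub = \kappa(C_H(\Gw)-\mu)\gd^{-2}\phi_H \le 0$, and one needs $-\L_\mu \usub + \usub^q \le 0$, i.e.\ $\usub^q \le \kappa(\mu - C_H(\Gw))\gd^{-2}\phi_H$. Dividing by $\usub = \kappa\phi_H$, this is $\kappa^{q-1}\phi_H^{q-1} \le (\mu - C_H(\Gw))\gd^{-2}$, which holds near $\bdw$ because $\phi_H^{q-1}\gd^2 \to 0$ there (using $\phi_H \lesssim \gd^{\ga_+^H}$ and $\ga_+^H > 0$), hence for $\kappa$ small and on a strip $\Gw_{\rho_1}$. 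On the inner part $\{\gd \ge \rho_1\}$, $\phi_H$ is bounded and $\gd^{-2}$ is bounded below, so shrinking $\kappa$ further makes $\usub$ a subsolution in all of $\Gw$; as $\phi_H \in H^1_0(\Gw)$ this $\usub$ is in $H^1_0(\Gw)\cap C(\Gw)$. For the supersolution, I would use the maximal solution $U_{max}$ of $-\L_\mu v + v^q = 0$ (its existence for $\mu < 1/4$ is available from \cite{BMR08}, and by the Keller--Osserman estimate Lemma~\ref{OK} it is locally bounded); after scaling $\kappa$ down once more we have $0 \le \usub \le U_{max}$ in $\Gw$. Lemma~\ref{l-subsup} then yields a solution $U_0$ with $\kappa\phi_H = \usub \le U_0 \le U_{max}$, whence $\liminf_{x\to\bdw} U_0(x)/\phi_H(x) \ge \kappa > 0$, so $U_0$ is nontrivial.

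It remains to check that $\tr_\mu(U_0) = 0$, i.e.\ the normalized boundary trace of $U_0$ on $\bdw$ vanishes. Here I would argue that $U_0$ is trapped between $\kappa\phi_H$ and a solution with zero trace: more directly, since $U_0 \le U_{max}$ and every positive solution dominated by $U_{max}$ near $\bdw$ that is $o(\gd^{\ga_-})$ on spheres has zero normalized trace, it suffices to show $\gb^{-\ga_-}\int_{\Gs_\gb} U_0\,dS \to 0$. But $U_0$ is a positive solution, hence $\L_\mu$-subharmonic, and it is dominated near $\bdw$ by an $\L_\mu$-superharmonic function --- for instance by comparison $U_0 \le w$ where $w$ solves $-\L_\mu w = 0$ in $\Gw_{\rho}$ with $w = U_0$ on $\Gs_\rho$ and $\tr_{\bdw}(w)$ a measure; Corollary~\ref{PL-nu-converse} then gives that $U_0$ has a normalized boundary trace $\nu_0 \in \M^+(\bdw)$. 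If $\nu_0 \neq 0$ then Theorem~\ref{subh1}/\eqref{trace*>0} forces $\int_{\Gs_\gb} U_0\,dS \gtrsim \gb^{\ga_-}$; but $U_0 \le C\,U_{max}$ is not enough to contradict this directly, so instead I would build a better supersolution with zero trace. Concretely, let $w_\rho := \K_\mu^{\Gw_\rho}[\nu_\rho] - \G_\mu^{\Gw_\rho}[\lambda]$ arising from Theorem~\ref{tr-sub} applied to $U_0$ on $\Gw_\rho$: since $\usub = \kappa\phi_H \le U_0 \le w_\rho$ and $\tr_{\bdw}(\phi_H) = 0$ while $\tr_{\bdw}(U_0) = \nu_0$, the monotonicity $\usub \le U_0$ combined with $\tr_{\bdw}(\usub) = 0$ does not by itself kill $\nu_0$; the cleanest route is to observe that $U_0$ was produced as an \emph{increasing} limit starting from $\usub$, so $U_0$ is the smallest solution above $\usub$, and a direct comparison with $\G_\mu^{\Gw_\rho}[(\text{suitable }\lambda)]$ plus $\kappa\phi_H$ shows $U_0(x) \le \kappa\phi_H(x) + \G_\mu^{\Gw_\rho}[U_0^q](x)$, where the potential term has zero normalized trace by \eqref{e-trace-potential} and $\kappa\phi_H = o(\gd^{\ga_+}) = o(\gd^{\ga_-})$ on spheres, giving $\tr_{\bdw}(U_0) = 0$.

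\emph{Main obstacle.} The delicate point is the last one: verifying $\tr_\mu(U_0) = 0$ rather than merely $\tr_\mu(U_0) \le$ (something). The sub/supersolution construction only pins $U_0$ from below by $\kappa\phi_H$ (which has zero trace) and from above by $U_{max}$ (whose trace I do not control), so I expect to need the finer information that $U_0$ arises as a monotone limit of solutions with prescribed small trace, together with the potential-trace estimate \eqref{e-trace-potential}, in order to conclude the trace genuinely vanishes; handling the boundary estimate $\phi_H \lesssim \gd^{\ga_+^H}$ with $\ga_+^H > \ga_+$ rigorously (via the local Hardy inequality and the behaviour of $\L_{C_H(\Gw)}$-harmonic functions near $\bdw$) is the supporting technical ingredient.
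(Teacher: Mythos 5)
Your construction of the subsolution $\tau\phi_H$ and the use of Lemma \ref{l-subsup} coincide with the paper's strategy, but there is a genuine gap at exactly the point you flag: the verification that $\tr_{\bdw}(U_0)=0$. Taking $U_{max}$ as the supersolution gives no control whatsoever on the boundary behaviour of $U_0$ (near $\bdw$ one only gets $U_0\lesssim \gd^{-2/(q-1)}$), and your attempted repairs are circular. To dominate $U_0$ by an $\L_\mu$-harmonic function $w$ in $\Gw_\rho$, or to write $U_0\le \kappa\phi_H+\G_\mu^{\Gw_\rho}[U_0^q]$, you must already know that $U_0$ is moderate — i.e.\ that $U_0^q\in\M^+_{\gd^{\ga_+}}(\Gw_\rho)$ or that $U_0$ is dominated near $\bdw$ by a superharmonic function — and the comparison principle (Lemma \ref{comparison}) cannot be invoked on $\Gw_\rho$ without knowing the relative behaviour of $U_0$ and the candidate majorant on $\bdw$, which is precisely what is unknown. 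Likewise, "$U_0$ is the smallest solution above $\usub$" does not by itself yield the integral inequality you claim; a priori the minimal solution above $\kappa\phi_H$ could fail to be moderate unless one exhibits a moderate supersolution above it.

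The paper closes this gap by building a supersolution with the correct boundary behaviour instead of $U_{max}$: for $\rho\in(0,\bar\rho]$ one solves \eqref{*nur} in $\Gw_\rho$ with zero trace on $\bdw$ and trace $k\,dS$ on $\Gs_\rho$ (Proposition \ref{nl-existence}), lets $U_{0,\infty}=\lim_{k\to\infty}U_{0,k}$, and glues $U_{0,\infty}$ via a minimum with an interior solution $v_R$ of \eqref{int-bvp} in $D_R$ with data $2U_{0,\infty}$ on $\Gs_R$; the resulting $\usup$ is a supersolution in all of $\Gw$ which coincides with $U_{0,\infty}$ near $\bdw$ and hence satisfies $\usup\sim\gd^{\ga_+}$ there. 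This single object does both jobs at once: since $\phi_H\sim\gd^{a_+}$ with $a_+=\frac12+\sqrt{\frac14-C_H(\Gw)}>\ga_+$, one has $\tau\phi_H<\usup$ near $\bdw$ and then everywhere by Lemma \ref{comparison}, so Lemma \ref{l-subsup} applies; and the bound $U_0\le\usup\lesssim\gd^{\ga_+}$ near $\bdw$ immediately forces $\tr_{\bdw}(U_0)=0$ (Corollary \ref{tr=zero}). Without such a supersolution carrying the $\gd^{\ga_+}$ decay, your argument establishes existence and the liminf bound but not the vanishing of the normalized trace, so the proof as written is incomplete.
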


\proof
Since $-\L_{C_H(\Omega)}\phi_H=0$ in $\Omega$, for a small $\tau>0$ we obtain
$$-\L_\mu(\tau\phi_H)+(\tau\phi_H)^q=-\frac{\mu-C_H(\Omega)}{\delta^2}(\tau\phi_H)+(\tau\phi_H)^q\le 0\quad\text{in $\Omega$},$$
so that $\tau\phi_H$ is a subsolution for \eqref{*nu-0} in $\Omega$.

Fix $\rho\in(0,\bar\rho]$.
Similarly to the proof of Theorem \ref{Gw-ex},
for every $k\geq 0$ denote $\nu_{\rho,k}=k dS_{\Gs_\rho}$ and let $\nu\in \M^+(\bdw_\rho)$ be the measure \sth $\nu\chr{\bdw}=0$ and $\nu\chr{\Gs_\rho}=\nu_{\rho,k}$. By Proposition \ref{nl-existence} there exists a (unique) solution of \eqref{*nur} with this boundary data. Denote this solution by $U_{0,k}$ and put
$$U_{0,\infty}=\lim_{k\to\infty}U_{0,k}.$$
Let $R\in (0,\rho)$. By Lemma \ref{interior-bvp} there exists a unique solution $v_R$ of \eqref{int-bvp} in $D_R$ with $f=2U_{0,\infty}$ on $\Gs_R$.
We define,
\begin{equation*}
\usup:=\min\{U_{0,\infty},u_R\}\quad \text{in } D_R\cap\Gw_\rho.
\end{equation*}
Then $\usup$ is a supersolution of $(P_\mu)$ in $D_R\cap\Gw_\rho$, $\usup=U_{0,\infty}$ in $D_R\cap\Gw_{\rho'}$ for some $\rho'\in (R,\rho)$ and $\usup=u_R$ in $D_{R'}\cap\Gw_\rho$ for some $R'\in (R,\rho')$. Therefore  setting $\usup=u_R$ in $\Gw\sms\Gw_\rho$ and $\usup=U_{0,\infty}$
in $\Gw\sms D_R$ provides an extension (still denoted by $\usup$) that is a supersolution of $(P_\mu)$ in $\Gw$. As $\usup= U_{0,\infty}$ in a neighborhood of $\bdw$ it follows that $\usup\sim \gd^{\ga_+}$ in such a neighborhood. On the other hand $\phi_H\sim \gd^{a_+}$ where $a_+:=\frac{1}{2}+\sqrt{\frac{1}{4}-C_H(\Gw)}$. As $C_H(\Gw)<\mu$ it follows that $\ga_+<a_+$ so that $\gd^{\ga_+}>\gd^{a_+}$. Therefore $\tau\phi_H<\usup$ near $\bdw$ and therefore, by Lemma \ref{comparison},
everywhere in $\Gw$. Finally by  Lemma \ref{l-subsup} we conclude that there exists a solution $U_0$ of $(P_\mu)$ in $\Gw$ such that $\tau\phi_H<U_0<\usup$. Thus $U_0$ is a positive solution such that $\tr (U_0)=0$.
\qed

{\small
\subsection*{Acknowledgements}
The first author wishes to acknowledge the support of the Israel Science Foundation, funded by the Israel Academy of Sciences and Humanities, through grant 91/10.
Part of this work was carried out during the visit of the second author to Technion, supported by the Joan and Reginald Coleman-Cohen Fund.
VM is grateful to Coleman-Cohen Fund and Technion for their support and hospitality.

The authors wish to thank Catherine Bandle and Yehuda Pinchover for many fruitful discussions.
}

\begin{bibdiv}
\begin{biblist}

\bib{An-CR82}{article}{
 author={Ancona, Alano},
  title={Comparaison des mesures harmoniques et des fonctions de Green pour
  des op\'erateurs elliptiques sur un domaine lipschitzien},
  language={French, with English summary},
  journal={C. R. Acad. Sci. Paris S\'er. I Math.},
  volume={294},
  date={1982},
  number={15},
  pages={505--508},
}

\bib{Ancona-87}{article}{
   author={Ancona, Alano},
   title={Negatively curved manifolds, elliptic operators, and the Martin
   boundary},
   journal={Ann. of Math. (2)},
   volume={125},
   date={1987},
   number={3},
   pages={495--536},
   issn={0003-486X},
}

\bib{Ancona-90}{article}{
   author={Ancona, Alano},
   title={Th\'eorie du potentiel sur les graphes et les vari\'et\'es},
   language={French},
   conference={
      title={\'Ecole d'\'et\'e de Probabilit\'es de Saint-Flour
      XVIII---1988},
   },
   book={
      series={Lecture Notes in Math.},
      volume={1427},
      publisher={Springer, Berlin},
   },
   date={1990},
   pages={1--112},
}

\bib{An-Green97}{article}{
   author={Ancona, Alano},
   title={First eigenvalues and comparison of Green's functions for elliptic
   operators on manifolds or domains},
   journal={J. Anal. Math.},
   volume={72},
   date={1997},
   pages={45--92},
   issn={0021-7670},
}

\bib{BMR08}{article}{
   author={Bandle, Catherine},
   author={Moroz, Vitaly},
   author={Reichel, Wolfgang},
   title={`Boundary blowup' type sub-solutions to semilinear elliptic
   equations with Hardy potential},
   journal={J. Lond. Math. Soc. (2)},
   volume={77},
   date={2008},
   number={2},
   pages={503--523},
   issn={0024-6107},
}

\bib{Brezis-Marcus}{article}{
   author={Brezis, Ha{\"{\i}}m},
   author={Marcus, Moshe},
   title={Hardy's inequalities revisited},
   journal={Ann. Scuola Norm. Sup. Pisa Cl. Sci. (4)},
   volume={25},
   date={1997},
   number={1-2},
   pages={217--237 (1998)},
}

\bib{Tertikas}{article}{
   author={Filippas, Stathis},
   author={Moschini, Luisa},
   author={Tertikas, Achilles},
   title={Sharp two-sided heat kernel estimates for critical Schr\"odinger
   operators on bounded domains},
   journal={Comm. Math. Phys.},
   volume={273},
   date={2007},
   number={1},
   pages={237--281},
   issn={0010-3616},
}

\bib{Veron}{article}{
   author={Gkikas, Konstantinos T.},
   author={V{\'e}ron, Laurent},
   title={Boundary singularities of solutions of semilinear elliptic
   equations with critical Hardy potentials},
   journal={Nonlinear Anal.},
   volume={121},
   date={2015},
   pages={469--540},
   issn={0362-546X},
}

\bib{MMP}{article}{
   author={Marcus, Moshe},
   author={Mizel, Victor J.},
   author={Pinchover, Yehuda},
   title={On the best constant for Hardy's inequality in $\mathbf{R}^n$},
   journal={Trans. Amer. Math. Soc.},
   volume={350},
   date={1998},
   number={8},
   pages={3237--3255},
   issn={0002-9947},
}

\bib{MMN}{article}{
   author={Marcus, Moshe},
   author={Nguen, Phuoc-Tai},
   title={Moderate solutions of semilinear elliptic equations with Hardy potential},
   journal={Ann. Inst. H. Poincar\'e Anal. Non Lin\'eaire (2015) doi: 10.1016/j.anihpc.2015.10.001 (available online)},
   }

\bib{Marcus-Veron}{book}{
   author={Marcus, Moshe},
   author={V{\'e}ron, Laurent},
   title={Nonlinear second order elliptic equations involving measures},
   series={De Gruyter Series in Nonlinear Analysis and Applications},
   volume={21},
   publisher={De Gruyter, Berlin},
   date={2014},
   pages={xiv+248},
}

\end{biblist}

\end{bibdiv}

\end{document}